\newtheorem{theorem}{Theorem}
\theoremstyle{plain}
\newtheorem{claim}[theorem]{Claim}
\newtheorem{conjecture}[theorem]{Conjecture}
\newtheorem{corollary}[theorem]{Corollary}
\newtheorem{fact}[theorem]{Fact}
\newtheorem{lemma}[theorem]{Lemma}
\newtheorem{proposition}[theorem]{Proposition}
\numberwithin{equation}{section}
\numberwithin{theorem}{section}
\numberwithin{case}{section}
\numberwithin{subcase}{case}
\def\A{\mathcal{A}}
\def\F{\mathcal{F}}
\def\K{R}
\def\cP{\mathcal{P}}
\def\Q{\mathcal{Q}}
\def\R{\mathcal{R}}
\def\sss{\mathcal{S}}
\def \a{\alpha}
\def \e{\epsilon}
\def \r{\gamma}
\def\bfv{\mathbf{v}}
\def\bfi{\mathbf{i}}
\def\bfu{\mathbf{u}}
\begin{document}

\title{Minimum codegree threshold for $C_6^3$-factors in $3$-uniform Hypergraphs }

\author{Wei Gao}
\address{Dept of Math and Stat, Georgia State University, Atlanta, GA 30302-4110, USA}
\email[Wei Gao]{wgao2@gsu.edu}

\author{Jie Han}
\address{Instituto de Matem\'{a}tica e Estat\'{\i}stica, Universidade de S\~{a}o Paulo, Rua do Mat\~{a}o 1010, 05508-090, S\~{a}o Paulo, Brazil}
\email[Jie Han]{jhan@ime.usp.br}
\thanks{The second author is supported by FAPESP (Proc. 2014/18641-5).}

\begin{abstract}
Let $C_6^3$ be the 3-uniform hypergraph on $\{1,\dots, 6\}$ with edges $123, 345,561$, which can be seen as the triangle in 3-uniform hypergraphs.
For sufficiently large $n$ divisible by 6, we show that every $n$-vertex 3-uniform hypergraph $H$ with minimum codegree at least $n/3$ contains a $C_6^3$-factor, i.e., a spanning subhypergraph consisting of vertex-disjoint copies of $C_6^3$.
The minimum codegree condition is best possible. This improves the asymptotical result obtained by Mycroft and answers a question of R\"odl and Ruci\'nski exactly.
\end{abstract}

\maketitle

\section{Introduction}

In graph theory, finding certain large or spanning subgraph in a given graph $H$ is one of the most important topics to study. 
In particular, finding vertex-disjoint copies of some given graph has a long history and has received much attention (see surveys \cite{KuOs-survey, RR, zsurvey}).
More precisely, given a graph $G$ of order $g$ and a graph $H$ of order $n$, a \emph{$G$-tiling} of $H$ is a subgraph of $H$ that consists of vertex-disjoint copies of $G$. When $g$ divides $n$, a \emph{perfect $G$-tiling} (or a \emph{$G$-factor}) of $H$ is a $G$-tiling of $H$ consisting of $n/g$ copies of $G$. 

When $G$ is a single edge, the perfect $G$-tiling is also called a \emph{perfect matching}.
Tutte's Theorem \cite{Tu47} gives a characterization of all those graphs which contain a perfect matching.
But for the tilings of general $G$, no such characterization is known.
Moreover, Hell and Kirkpatrick \cite{HeKi} showed that the decision problem of whether a graph $H$ has a $G$-factor is NP-complete if and only if $G$ has a component which contains at least 3 vertices.
So it is natural to find sufficient conditions which ensure the existence of a $G$-factor.

The celebrated Hajnal-Szemer\'edi Theorem \cite{HaSz} says that every $n$-vertex graph $H$ with $\delta(H)\ge (k - 1)n/k$ contains a $K_k$-factor (the case $k=3$ was obtained by Corr\'adi and Hajnal \cite{CoHa}).
For general graph $G$, the minimum degree threshold for $G$-factors was determined by K\"uhn and Osthus \cite{KuOs09}, up to an additive constant, improving the results in \cite{AY96, KSS-AY}.

It is natural to extend these results to hypergraphs. 
Given $k\ge 2$, a \emph{$k$-uniform hypergraph} (in short, \emph{$k$-graph}) consists of a vertex set $V$ and an edge set $E\subseteq \binom{V}{k}$, where every edge is a $k$-element subset of $V$. Given a $k$-graph $H$ with a set $S$ of $d$ vertices (where $1\le d\le k-1$) we define $\deg_{H}(S)$ to be the number of edges containing $S$ (the subscript $H$ is often omitted if it is clear from the context). The \emph{minimum $d$-degree} $\delta_d(H)$ of $H$ is the minimum of $\deg_{H}(S)$ over all $d$-vertex sets $S$ in $H$.
We refer to $\delta _{k-1} (H)$ as the \emph{minimum codegree} of $H$.
The $G$-tilings and $G$-factors in $k$-graphs are defined analogously as in graphs. Define $t_d(n, G)$ to be the smallest integer $t$ such that every $k$-graph $H$ of order $n\in g\mathbb{N}$ with $\delta_d(H)\ge t$ contains a $G$-factor.

However, tilings problems become much harder for hypergraphs.
For example, despite much recent progress \cite{AFHRRS, CzKa, Khan2, Khan1, KOT, RRS09, TrZh13, TrZh15}, we still do not know the 1-degree threshold for a perfect matching in $k$-graphs for arbitrary $k$.

Other than the matching case, most work on $G$-factors has been done when $G$ is a 3-graph on four vertices or when $G$ is a $k$-partite $k$-graph. Let $K_4^3$ be the complete 3-graph on four vertices, and let $K_4^3-e$ be the (unique) 3-graph on four vertices with three edges. 
Let $K_4^3 - 2e$ be the (unique) 3-graph on four vertices with two edges (this 3-graph was denoted by $C_4^3$ in \cite{KO} and by $\mathcal{Y}$ in \cite{HZ1}). 
Lo and Markstr\"om \cite{LM1} showed that $t_2(n, K_4^3)=(3/4+o(1)) n$ and Keevash and Mycroft \cite{KM1} determined the exact value of $t_2(n, K_4^3)$ for sufficiently large $n$. 
Lo and Markstr\"om \cite{LM2} showed that $t_2(n, K_4^3 - e)=(1/2+o(1))n$, and very recently Han, Lo, Treglown and Zhao \cite{HLTZ_K4} showed that $t_2(n, K_4^3 - e)=n/2-1$ for large $n$. 
K\"uhn and Osthus \cite{KO} showed that $t_2(n, K_4^3 - 2e)=(1+o(1))n/4$, and Czygrinow, DeBiasio and Nagle \cite{CDN} subsequently determined $t_2(n, K_4^3 - 2e)$ exactly for large $n$. More recently Han and Zhao \cite{HZ3} and independently Czygrinow \cite{Czy14} determined $t_1(n, K_4^3 - 2e)$ exactly for large $n$. 
Mycroft \cite{My14} determined $t_{k-1}(n, F)$ asymptotically for many $k$-partite $k$-graphs $F$ (including complete $k$-partite $k$-graphs and loose cycles). Han, Zang and Zhao \cite{HZZ_tiling} determined $t_1(n, K)$ asymptotically for all complete $3$-partite $3$-graphs $K$.

A \emph{$k$-uniform loose cycle $C_s^k$} is an $s$-vertex $k$-graph whose vertices can be ordered cyclically in such a way that the edges are sets of consecutive $k$ vertices and every two consecutive edges share exactly one vertex. Note that by definition, $s$ must be divisible by $k-1$ and at least $3k-3$.

R\"odl and Ruci\'nski \cite[Problem 3.15]{RR} asked for the values of $t_2(n, C_s^3)$ for all $s\ge 6$.
This was solved asymptotically by the aforementioned result of Mycroft \cite{My14}, who determined $t_{k-1}(n, C_s^k)$ asymptotically for $k\ge 3$.
In particular, it \cite{My14} is shown that $t_2(n, C_6^3) = (1/3 + o(1))n$.
In this paper we determine the exact value of $t_2(n, C_6^3)$ for sufficiently large $n$, improving Mycroft's result.

\begin{theorem}[Main result]\label{main}
Let $n\in 6\mathbb{Z}$ be sufficiently large. Suppose $H$ is a $3$-graph on $n$ vertices with 
$\delta_2(H) \ge n/3$. Then $H$ contains a $C_6^3$-factor.
\end{theorem}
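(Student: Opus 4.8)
The plan is to follow the by-now-standard absorption method for spanning structures in hypergraphs, combined with a careful analysis of the extremal configurations that force the codegree threshold $n/3$. Concretely, I would prove an \emph{absorbing lemma}: there is a constant $\gamma > 0$ such that $H$ contains a set $A$ of $o(n)$ vertices, with $|A| \in 6\mathbb{Z}$, such that for every vertex set $W$ disjoint from $A$ with $|W| \le \gamma n$ and $|W| \in 6\mathbb{Z}$, the induced subhypergraph $H[A \cup W]$ has a $C_6^3$-factor. Granting this, one removes $A$, applies an \emph{almost-perfect tiling lemma} to cover all but $\gamma n$ vertices of $H - A$ by vertex-disjoint copies of $C_6^3$ (this is where Mycroft's asymptotic result, or a direct argument using the same codegree bound, enters), and then absorbs the uncovered remainder $W$ into $A$. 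The almost-tiling step is comparatively soft: since $\delta_2(H) \ge n/3 = (1/3 + o(1))n$ matches the asymptotic threshold, a greedy/weighted-fractional-tiling argument produces a $C_6^3$-tiling covering $(1 - o(1))n$ vertices.

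For the absorbing lemma, the heart is to show that a "typical" $6$-set, or rather a suitable small gadget, has many \emph{absorbers}. The natural notion: call a $7$-tuple (or an appropriately sized tuple) of vertices an absorber for a $6$-set $S$ if both the tuple alone and the tuple together with $S$ span $C_6^3$-factors of the right sizes. I would show that for every $6$-set $S$ spanning a copy of $C_6^3$ in $H$ there are $\Omega(n^{c})$ absorbers, then apply the standard probabilistic argument (choose a random small family, use Markov/Chernoff) to extract the absorbing set $A$. The key structural input is a \emph{connecting} or \emph{link} lemma exploiting $\delta_2(H) \ge n/3$: given two disjoint pairs of vertices, one can find a short $C_6^3$-path-like structure joining them, or more simply, that the "reachability" relation under $n/3$ codegree has only few classes, each of linear size. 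This is exactly the place where the value $1/3$ is used: three pairwise "far" vertex classes of size roughly $n/3$ is the obstruction, mirroring the space-barrier/divisibility-barrier extremal examples.

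The main obstacle — and where most of the work lies — is handling the \emph{extremal case}, i.e. when $H$ is close to one of the constructions achieving $\delta_2 = n/3 - 1$ without a $C_6^3$-factor. There are (at least) two genuinely different near-extremal configurations to rule out: a "space barrier" type, where a small vertex subset meets too few edges so that any $C_6^3$-factor would need too many copies anchored there, and a "divisibility/parity barrier" type, where $V(H)$ has a partition into parts whose sizes modulo something obstruct an exact tiling (since $C_6^3$ has two edges but six vertices, various mod-$2$ and mod-$3$ invariants are relevant). For each, I would assume $H$ is $\delta$-close to the extremal configuration for small $\delta$, and then build the $C_6^3$-factor directly by hand: first fix up the "bad" vertices using the few available edges, reducing to a balanced leftover, then tile the balanced leftover using the near-completeness within the parts. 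This stability-plus-extremal step is routine in spirit but technically the longest, since one must track several counting inequalities simultaneously; I expect it to occupy the bulk of the paper.

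Finally, I would assemble the pieces: reserve the absorbing set $A$, branch on whether $H$ (or $H - A$) is $\delta$-close to an extremal configuration; in the non-extremal case run almost-tiling then absorb; in the extremal case run the direct construction above. The non-extremal case uses only the asymptotic tiling threshold plus absorption, so the real novelty of Theorem~\ref{main} over Mycroft's result is entirely in the extremal analysis and in pinning down that the exact threshold is $n/3$ rather than $n/3 + O(1)$, which forces one to show the extremal constructions are the \emph{unique} obstructions.
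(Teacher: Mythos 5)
Your high-level architecture --- absorbing lemma, almost-perfect tiling, a split into extremal and non-extremal cases --- matches the paper's. But the sketch of the absorbing lemma has a genuine gap, and it is precisely the place where the paper's main technical innovation lies. You propose to show that for every $6$-set $S$ there are $\Omega(n^c)$ absorbers, citing a ``reachability relation\dots with only few classes, each of linear size.'' Having few large reachability classes is not by itself enough: under $\delta_2(H)\ge n/3$ the partition into closed classes can genuinely have two or three parts, and then an arbitrary leftover $6$-set whose index vector with respect to that partition is ``odd'' in some coordinate may have \emph{no} absorbers built the naive way, since the robust $C_6^3$-vectors available may all be even. The paper resolves this with the lattice-based absorbing method: it partitions $V$ into at most three closed parts (Lemma~\ref{lem:partition}), merges parts when a transferral is present in the index-vector lattice (Lemma~\ref{lem:trans}), shows that in the remaining situation all even $6$-vectors are robust (Lemma~\ref{lem:even}), and --- crucially --- pre-selects one or two copies $F_0, F_1, F_2$ of $C_6^3$ with odd index vectors (Lemma~\ref{lem:oddcopy}) that are used to correct the parity of the leftover before absorbing it $6$-set by $6$-set. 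None of this appears in your sketch, and without it the probabilistic extraction of $\mathcal{F}_1$ does not yield a set $W$ that can absorb an arbitrary small leftover.

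A related structural mismatch: you fold the ``divisibility/parity barrier'' into the extremal case and plan to defeat it by a direct construction. The paper does not do this --- its extremal case (Theorem~\ref{thm:E}) is only the space barrier (a $2n/3$-set with few edges), and divisibility is dealt with entirely inside the absorbing lemma via the lattice argument. Your reallocation is not obviously wrong, but it would force you to characterize and handle all near-divisibility configurations by hand, which is substantially harder than the paper's route; more importantly, you would still face multi-class reachability in the \emph{non-extremal} case, so you cannot avoid a lattice-type argument. Two smaller issues worth flagging: you restrict the absorber count to $6$-sets ``spanning a copy of $C_6^3$,'' but the leftover $6$-sets from the almost-tiling need not span anything, so the claim has to be for arbitrary $6$-sets with a suitable index vector; and the almost-tiling lemma cannot just be quoted from Mycroft, since his factor theorem needs $\delta_2\ge (1/3+o(1))n$ while after removing the absorbing set you only have $(1/3-o(1))n$, so one needs the non-extremality hypothesis --- the paper proves the almost-tiling directly from the weak regularity lemma plus an almost-perfect matching lemma (Lemma~\ref{almost}), which is lighter machinery than the hypergraph blow-up lemma Mycroft used.
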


The minimum codegree condition in Theorem~\ref{main} is best possible by the following example.
Let $n$ be an integer divisible by 6 and $V=X\cup Y$ such that $|V|=n$, $X\cap Y=\emptyset$, $|X|=n/3-1$.
Let $H_0$ be the 3-graph on $V$ where $E(H_0)$ consists of all triples that intersect $X$.
Clearly, $\delta_2(H_0) = |X| = n/3-1$.
Moreover, observe that each copy of $C_6^3$ in $H_0$ must contain at least two vertices in $X$.
Thus, $H_0$ does not contain a $C_6^3$-factor.

The approach we use in this paper is quite different from that used by Mycroft \cite{My14}.
Indeed, the main tool in \cite{My14} is the \emph{Hypergraph Blow-up Lemma} recently developed by Keevash \cite{Keevash_blowup}. 
In contrast, our proof uses the so-called \emph{lattice-based absorbing method}, together with an \emph{almost perfect tiling lemma} and an {extremal case analysis}.
The lattice-based absorbing method is developed recently by the second author, which is a variant of the absorbing method initiated by R\"odl, Ruci\'nski and Szemer\'edi \cite{RRS06}.
Roughly speaking, given a $k$-graph $H$, the existence of the so-called \emph{absorbing set} relies on the fact that $V(H)$ is closed (see definitions in Section 3). 
However, $V(H)$ is not closed in some applications.
In this case, the lattice-based absorbing method provides a weaker absorbing set, which sometimes is sufficient by combining other information of $H$.
Interestingly, in our problem, it is not clear whether $V(H)$ is closed (it will certainly be true if $\delta_2(H)\ge (1/2+o(1))n$).
Nevertheless, the lattice-based absorbing method works well and gives the absorbing set.

In the forthcoming paper \cite{HZ_cycpac}, the second author and Zhao determine $t_{k-1}(n, C_s^k)$ exactly, improving the asymptotical result of Mycroft.
The reason for splitting the results into two papers is because the techniques used are different.
In fact, since $C_6^3$ has a unique 3-partite realization, which is balanced ($C_6^3$ is a spanning subhypergraph of $K_{3}^3(2)$), the proof of our almost perfect tiling lemma is a standard application of the regularity method.
In contrast, any other $C_s^k$ allows an unbalanced $k$-partite realization.
This makes the proof of the almost perfect tiling lemma more involved.
In contrast, the proof of the absorbing lemma in \cite{HZ_cycpac} becomes easier.

\section{Proof of Theorem~\ref{main}}

As a typical approach to obtain exact results, our proof of Theorem \ref{main} consists of an extremal case and a nonextremal case.
For $k\ge 3$ and $\e>0$, we say that a $k$-graph $H$ is \emph{$\e$-extremal} if there is a vertex set $S\subseteq V(H)$ of size $\lfloor \frac{k-1}kn \rfloor$ such that $e(H[S]) \le \e n^k$.

\begin{theorem}[Nonextremal case]\label{nonex}
Let $\gamma > 0$ and let $n\in 6\mathbb{Z}$ be sufficiently large. Suppose $H$ is an $n$-vertex $3$-graph with $\delta_2(H) \ge n/3 - \r n$. If $H$ is not $3\r$-extremal, then $H$ contains a $C_6^3$-factor.
\end{theorem}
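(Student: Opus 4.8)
The plan is to use the lattice-based absorbing method, following the standard three-step structure: (i) build an absorbing set, (ii) find an almost perfect $C_6^3$-tiling of the leftover, (iii) absorb the remaining few vertices. Throughout, set $C=C_6^3$ and write $|C|=6$. First I would fix a small hierarchy of constants $\gamma \gg \epsilon \gg \ldots$ and recall the almost perfect tiling lemma alluded to in the excerpt: every $n$-vertex $3$-graph with $\delta_2 \ge n/3 - \gamma n$ contains a $C_6^3$-tiling covering all but at most $\epsilon n$ vertices. Since $C_6^3 \subseteq K_3^3(2)$ and this $3$-partite realization is balanced, this lemma is obtained by a routine application of the (weak) hypergraph regularity lemma together with a fractional-tiling / cover argument, so I would treat it as a black box.

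The heart of the argument is the absorbing lemma. For a set $T$ of six vertices, call a $6$-set $A$ (with $A\cap T=\emptyset$) an \emph{absorber} for $T$ if both $H[A]$ and $H[A\cup T]$ contain spanning copies of $C_6^3$ (here $H[A\cup T]$ has $12$ vertices, so we need two vertex-disjoint copies of $C_6^3$ covering $A\cup T$). I would first show that, under $\delta_2(H)\ge n/3-\gamma n$, every $6$-set $T$ has $\Omega(n^6)$ absorbers — this is a supersaturation/counting step using the codegree bound to greedily embed the relevant copies of $C_6^3$. A standard random-selection argument then produces an absorbing family $\mathcal{A}$, i.e. a $C_6^3$-tiling on a vertex set $A_0$ with $|A_0|\le \epsilon n$ such that for \emph{every} balanced remainder — any vertex set $W$ disjoint from $A_0$ with $|W|\le \epsilon' n$ and $6\mid |W|$ — the induced subhypergraph $H[A_0\cup W]$ has a $C_6^3$-factor. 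The subtlety, flagged in the introduction, is that $V(H)$ need not be ``closed'': the reachability/lattice structure generated by the absorbers may have more than one ``part'' when $\delta_2(H)$ is only around $n/3$. So the random absorbing family only works for remainders $W$ that lie in the ``correct'' residue class of the index lattice. The fix is the usual one: either (a) the lattice is trivial and we are done, or (b) the nontrivial lattice structure forces $H$ to be close to the extremal configuration $H_0$, contradicting the non-$3\gamma$-extremal hypothesis. So I would carry out a case distinction on the partition structure of $V(H)$ into ``reachable'' classes, show there is at most one non-negligible class, and use the $\epsilon n$-slack in the almost perfect tiling to mop up the negligible classes and keep the leftover in the good residue class.

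Putting it together: apply the absorbing lemma to get $A_0$ with $|A_0|\le \epsilon n$; remove $A_0$ and apply the almost perfect tiling lemma to $H' = H - A_0$ (which still has $\delta_2(H')\ge n/3 - \gamma n - \epsilon n$, fine after adjusting constants) to get a $C_6^3$-tiling of $H'$ leaving an uncovered set $W$ with $|W|\le \epsilon n < \epsilon' n$ and automatically $6\mid |W|$ since both $n$ and the covered part are divisible by $6$; finally absorb $W$ using $A_0$. The main obstacle I expect is exactly the non-closedness issue: making the absorbing lemma robust when $V(H)$ splits into several reachability classes, and showing cleanly that a genuinely nontrivial split puts $H$ into the $3\gamma$-extremal regime. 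The supersaturation count of absorbers for a fixed $6$-set $T$ is also somewhat delicate because $C_6^3$ is not complete, so I would be careful to embed the two copies of $C_6^3$ on $A\cup T$ by first placing the vertices of $T$ onto degree-$2$ positions of the cycle (the two ``link'' vertices per copy) where the codegree condition gives the most room.
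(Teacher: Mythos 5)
Your high-level outline (absorbing set, then almost perfect tiling, then absorb the leftover) matches the paper's structure, and the derivation of the almost-perfect tiling via weak regularity and $C_6^3 \subseteq K_3^3(2)$ is indeed what the paper does. But the way you propose to handle the non-closedness issue is a genuine gap, and it is precisely the place where this paper's contribution lies.

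You assert that under $\delta_2 \ge n/3 - \gamma n$ every $6$-set $T$ has $\Omega(n^6)$ absorbers. This is exactly what is \emph{not} known here; the paper's Claim~\ref{clm:abs} establishes it only for $6$-sets whose index vector (with respect to the reachability partition) has all coordinates even. You then propose the dichotomy ``either the reachability lattice is trivial, or the nontrivial lattice forces $H$ to be $3\gamma$-extremal, so there is at most one non-negligible reachability class and the negligible ones can be mopped up using the $\epsilon n$ slack.'' Neither horn of this dichotomy is available. Lemma~\ref{lem:partition} produces a partition into at most three parts each of size at least $(1/3-2\gamma)n$; there are no negligible classes to mop up. Moreover, the paper does not deduce (and does not need to deduce) extremality from a nontrivial lattice, and indeed Lemma~\ref{lem:absorb} is proved with no non-extremality hypothesis at all — only the codegree bound. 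The non-extremality hypothesis of Theorem~\ref{nonex} enters only in the almost-perfect tiling step (Lemma~\ref{lem:tiling}), and the short argument there is simply that adding $\gamma n$ vertices to a sparse $2|V'|/3$-set inside $H'$ would yield a sparse $2n/3$-set in $H$.

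The missing idea is Lemma~\ref{lem:oddcopy}: even when the robust $C_6$-vector lattice contains only even vectors (so the lattice is genuinely nontrivial), the codegree condition together with the balance of the parts still forces the existence of individual copies of $C_6$ with an odd index vector. The absorbing set $W$ then consists of a large random family of $m$-sets that can absorb any $6$-set with even coordinates, \emph{together with} one or two of these exceptional odd-index copies ($\mathcal{F}_0$ in the proof). Any leftover $U$ with $6\mid |U|$ is first shifted into the all-even residue class by adding a suitable subset of $\mathcal{F}_0$, and only then greedily absorbed. Without this device, your plan has no way to absorb a leftover whose index vector has odd coordinates, since you cannot guarantee (and it is likely false in general) that a nontrivial reachability partition here is a near-extremal signal.
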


\begin{theorem}[Extremal case]\label{thm:E}\
Let $0<\e\ll 1$ and let $n\in 6\mathbb{Z}$ be sufficiently large. Suppose $H$ is an $n$-vertex $3$-graph with 
$\delta_2(H) \ge n/3$. If $H$ is $\e$-extremal, then $H$ contains a $C_6^3$-factor.
\end{theorem}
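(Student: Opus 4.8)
The plan is to prove Theorem~\ref{thm:E} by a direct constructive argument exploiting the extremal structure. Since $H$ is $\e$-extremal, there is a set $S \subseteq V(H)$ with $|S| = \lfloor \frac{2n}{3} \rfloor$ and $e(H[S]) \le \e n^3$; write $T = V(H) \setminus S$, so $|T| = \lceil n/3 \rceil$. The first step is to clean up this partition: using the codegree condition $\delta_2(H) \ge n/3$, I would move a few vertices between $S$ and $T$ to obtain a partition $V(H) = A \cup B$ with $|B|$ close to $n/3$, such that $H[A]$ is very sparse (each vertex of $A$ lies in only $o(n^2)$ edges inside $A$) while every vertex of $B$ behaves ``typically'' — its link graph is dense. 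The point of the extremal example $H_0$ is that $B$ plays the role of $X$ (the ``covering'' part): essentially every edge of $H$ must meet $B$, and moreover each copy of $C_6^3$ needs at least two vertices in $B$. Since $|B| \approx n/3 = 2 \cdot (n/6)$, we have just barely enough vertices in $B$ to build $n/6$ disjoint copies, each using exactly two $B$-vertices and four $A$-vertices, so the target configuration is forced and there is essentially no slack.

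The combinatorial heart is therefore a one-sided covering/matching statement: I want to partition almost all of $A$ into quadruples and pair them with pairs from $B$ so that each pair-plus-quadruple spans a copy of $C_6^3$. Recall $C_6^3$ on $\{1,\dots,6\}$ with edges $123,345,561$; if I place the two $B$-vertices as $1$ and $4$ (the two vertices of degree $2$, which are ``opposite'' in the cycle), then the three edges are $\{1,2,3\}$, $\{3,4,5\}$, $\{5,6,1\}$, i.e.\ I need $A$-vertices $2,3,5,6$ with $b_1=1$, $b_2=4$ such that $\{b_1,2,3\}, \{3,b_2,5\}, \{5,6,b_1\} \in E(H)$. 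So from the $B$-side each pair $\{b_1,b_2\}$ must find an ``$A$-path'' $2\!-\!3\!-\!5\!-\!6$ with the appropriate edges to $b_1,b_2$; the relevant edges all use at least one vertex of $B$, which is exactly where the codegree hypothesis gives dense link graphs. I would first greedily set aside a small number of copies to absorb divisibility/leftover issues, then build the bulk by a defect-version of Hall's theorem or a direct greedy/random argument: order the pairs from $B$ and repeatedly extract a copy of $C_6^3$, showing at each step that the minimum codegree guarantees enough common neighbours in the remaining part of $A$ to complete a copy. A case distinction on how close $|B|$ is to $n/3$, and on whether $H[A]$ is literally empty or merely sparse, will be needed; if $|B| > n/3$ a few copies will use three (or more) $B$-vertices, and if $H[A]$ has some edges a few copies may use fewer than two $B$-vertices, but these exceptional copies are $O(\e n)$ in number and can be handled first.

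Concretely the steps are: (1) from $S,T$ produce the clean partition $A \cup B$ with the degree properties above, tracking exactly how $|B|$ compares to $n/3$; (2) handle the ``error'' edges — the $o(n^2)$ edges inside $A$ and any surplus or deficit in $|B|$ — by constructing $O(\sqrt\e\, n)$ special copies of $C_6^3$ first, so that after removing them we have $|A'| = 4m$, $|B'| = 2m$ for some $m$, with $H[A']$ essentially empty and all of $B'$ typical; (3) prove the covering lemma that partitions $A'$ into an ``$A$-path'' structure matched to the pairs of $B'$, completing a $C_6^3$-factor. The main obstacle I anticipate is step (3) under the tight count $|B'| = 2m$: because there is no room to waste $B$-vertices, a naive greedy extraction can strand a pair $\{b_1,b_2\}$ whose joint neighbourhood in the shrinking set $A'$ has been exhausted in the wrong way (e.g.\ enough common neighbours but no valid $2\!-\!3\!-\!5\!-\!6$ path). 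Overcoming this will require either a carefully ordered greedy process that always keeps the ``bad'' pairs' neighbourhoods large — exploiting that $\delta_2(H)\ge n/3$ forces $|N(b,a)\cap A'|$ to stay linear in $|A'|$ for most pairs $(b,a)$ — or an augmenting-path argument in an auxiliary bipartite-like structure between $B'$-pairs and $A'$-quadruples. A secondary subtlety is making sure the vertices placed at the degree-$1$ positions $2,5,6$ of $C_6^3$ can come from $A$ even though $H[A]$ is sparse: this is fine precisely because every edge of $C_6^3$ in our layout contains $b_1$ or $b_2$, so we never need an edge lying entirely inside $A$.
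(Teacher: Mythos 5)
Your high-level plan is essentially the paper's: clean the extremal bipartition, spend a few special copies of $C_6^3$ to fix the sizes and absorb atypical vertices, and then prove a balanced covering lemma that pairs each pair from the small part with a quadruple from the large part (the paper's Lemma~5.3, with $X=A_2$, $Z=B_2$, $|Z|=2|X|$). Your placement of the two covering-part vertices at positions $1$ and $4$, so that the three edges $\{1,2,3\},\{3,4,5\},\{5,6,1\}$ each meet the covering part exactly once, is also exactly the placement the paper uses. One small factual slip: you say $1$ and $4$ are ``the two vertices of degree $2$,'' but in $C_6^3$ the degree-$2$ vertices are $1,3,5$; vertex $4$ has degree~$1$. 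The construction you wrote down is still valid (the covering pair has degrees $2$ and $1$), so this does not break anything, but the justification as stated is wrong.

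The genuine gap is in step (3), the balanced covering lemma, which is where all the technical difficulty lives. You correctly diagnose that a naive greedy extraction will strand pairs under the exact count $|B'|=2m$, but you then only gesture at two unresolved alternatives (``a carefully ordered greedy process'' or ``an augmenting-path argument''), neither of which is developed or known to work here. The paper's actual tool is the K\"uhn--Osthus theorem on random perfect matchings in $(d,\e)$-super-regular bipartite graphs (\cite{KuOs06_pse}, quoted as Lemma~5.4). The idea is: split the large part $Z$ into four equal blocks $Z_1,\dots,Z_4$, form the auxiliary ``good-pair'' graph $G$ on $Z$, and choose perfect matchings $M_1,M_2,M_3$ between consecutive blocks uniformly at random; the K\"uhn--Osthus result guarantees that for \emph{every} $x\in X$ simultaneously, the three random matchings hit the link graph of $x$ near-proportionally, so almost every resulting quadruple $(z_i^1,z_i^2,z_i^3,z_i^4)$ is usable for $x$. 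After that, a single application of Hall's theorem to the bipartite graph between the matching $M$ of $X$ and the index set $[m]$ finishes. Without this randomization step, Hall's theorem alone has nothing to act on, and a one-pass greedy really does fail for the reason you identify. So the proposal is a correct outline that matches the paper's architecture, but it is missing the one idea (random perfect matchings in super-regular pairs) that makes the tight balanced case go through.
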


Theorem \ref{main} follows from Theorem \ref{nonex} and \ref{thm:E} immediately by choosing $\e$ from Theorem \ref{thm:E}.

As mentioned in Section 1, in the proof of Theorem~\ref{nonex}, we use the {lattice-based absorbing method}. 
Here is our absorbing lemma.

\begin{lemma}[Absorbing]\label{lem:absorb}
Given $0<\r \ll 1$, there exists $\alpha>0$ such that the following holds for sufficiently large $n$. Suppose $H$ is an $n$-vertex $3$-graph such that $\delta_2(H)\ge (1/3-\r)n$. 
Then there exists a vertex set $W\subseteq V(H)$ with $|W|\leq \r n$ such that for any vertex set $U\subseteq V(H)\setminus W$ with $|U|\leq \alpha n$ and $|U|\in 6\mathbb{Z}$, both $H[W]$ and $H[U\cup W]$ contain $C_6^3$-factors.
\end{lemma}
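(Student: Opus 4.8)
The plan is to follow the lattice-based absorbing framework of the second author. The core notion is that of an \emph{absorber}: for a $6$-set $T$ (which we want to absorb), a small vertex set $A_T$ is a $T$-absorber if both $H[A_T]$ and $H[A_T\cup T]$ contain $C_6^3$-factors. The standard strategy is: (i) show that every $6$-set $T$ has many absorbers of some bounded size $t$; (ii) use a random-greedy argument to pick a family $\mathcal{F}$ of vertex-disjoint absorbers so that every $6$-set $T$ is absorbed by a positive fraction of $\mathcal{F}$; (iii) set $W=\bigcup_{A\in\mathcal{F}}A$, then for any small $U$ of size divisible by $6$, greedily absorb the vertices of $U$ six at a time using distinct absorbers of $\mathcal{F}$, leaving the remaining absorbers to be tiled by their own $C_6^3$-factors, which together with the absorbed copies gives a $C_6^3$-factor of $H[U\cup W]$; taking $U=\emptyset$ handles $H[W]$.

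The first step — producing absorbers — is where the difficulty concentrates, because (as the introduction flags) we do \emph{not} know that $V(H)$ is closed under the relevant transferral/lattice structure, so a $6$-set $T$ need not have absorbers of \emph{uniformly} bounded size reachable by short "transferral paths". The lattice-based fix is to work with the partition of $V(H)$ into \emph{closed neighborhood classes} (the equivalence classes under "being in the link of many common structures"), and to build absorbers only for $6$-sets $T$ lying in a single class, or more generally to exploit the robust structure forced by $\delta_2(H)\ge(1/3-\gamma)n$: one shows that for most pairs of vertices there is a constant-length sequence of edge-swaps realizing a transferral, hence a constant-sized absorber. Concretely, I would first prove a \emph{connecting-type} sublemma: given $\delta_2(H)\ge(1/3-\gamma)n$, for any two $6$-sets $T_1,T_2$ there are $\Omega(n^{6t-6})$ sets $Z$ of a fixed size $6t-6$ such that $H[Z\cup T_i]$ has a $C_6^3$-factor for \emph{both} $i=1,2$; equivalently, every $6$-set has $\Omega(n^{6t})$ absorbers. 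Since $C_6^3$ is a spanning subgraph of the balanced complete $3$-partite $K_3^3(2)$, one can be generous here: a codegree just above $n/3$ already lets one greedily grow a $C_6^3$-copy through almost any prescribed vertices, and a short union of such copies with a suitably chosen "junction" set yields the absorber.

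Granting the sublemma, step (ii) is the routine probabilistic argument of R\"odl--Ruci\'nski--Szemer\'edi: choose each absorber independently with probability $p=\Theta(\alpha/n^{6t-1})$, apply Chernoff and a union bound to get a family of size $O(\gamma n)$ that is almost-disjoint and absorbs every $6$-set $\ge c\alpha n$ times for a small constant $c$; delete the $O(1)$-overlaps to make it genuinely disjoint, losing only a lower-order amount. Set $\alpha$ small enough relative to $\gamma$ and $c$ that $6\alpha n \le$ (number of absorbers absorbing any fixed $T$), and $|W|=|\mathcal F|\cdot(6t)\le\gamma n$. Step (iii) is then immediate: given $U$ with $|U|=6m\le\alpha n$, list $U=T_1\cup\cdots\cup T_m$ into $6$-sets arbitrarily and absorb $T_j$ with a fresh absorber $A_j\in\mathcal F$ (possible since $m\le\alpha n$ and each $T_j$ has $\ge 6\alpha n$ absorbers, so a disjoint choice survives), replacing $H[A_j]$'s $C_6^3$-factor by $H[A_j\cup T_j]$'s; the untouched absorbers keep their own $C_6^3$-factors, and the union is a $C_6^3$-factor of $H[U\cup W]$. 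The main obstacle, to restate, is the first step: making the absorber count $\Omega(n^{6t})$ robust \emph{without} a closedness hypothesis on $V(H)$ — this is exactly the place the "lattice-based" refinement (restricting to a reachability class, or absorbing the lattice-cosets separately) must be invoked, and getting the constant $t$ and the class structure right under only $\delta_2(H)\ge(1/3-\gamma)n$ is the technical heart of the lemma.
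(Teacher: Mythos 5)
Your proposal identifies the central obstruction correctly (it is not known that $V(H)$ is closed under reachability), but the ``connecting-type sublemma'' you then state as the concrete plan for step (i) is precisely the closedness assertion that the paper explicitly says it cannot establish: you claim that for \emph{any} two $6$-sets $T_1,T_2$ there are $\Omega(n^{6t-6})$ common $(6t-6)$-sets $Z$, equivalently that every $6$-set has $\Omega(n^{6t})$ absorbers, equivalently that $V(H)$ is $(\beta',t)$-closed. The paper's remark at the end of its preliminary discussion flags exactly this as an open question under $\delta_2(H)\ge(1/3-\gamma)n$. So your step (i), as stated, is a promissory note for a statement the authors themselves do not know to be true, and your steps (ii)--(iii) inherit that gap.

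What the paper actually does is genuinely weaker in step (i) and compensates in step (iii). It first finds a partition $\mathcal P=\{V_1,\dots,V_r\}$ (with $r\le 3$) into parts each of which is $(\beta,4)$-closed, then merges parts $V_i,V_j$ whenever the robust $C_6$-vector set $I^\mu_{\mathcal P,C}(H)$ contains two $6$-vectors differing by $\mathbf u_i-\mathbf u_j$ (Lemma~\ref{lem:trans}). After merging, Lemma~\ref{lem:even} forces every $6$-vector with all coordinates even to lie in $I^\mu_{\mathcal P,C}(H)$. Claim~\ref{clm:abs} then produces many absorbers \emph{only for $6$-sets $S$ whose index vector is even in every coordinate} -- not for all $6$-sets. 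To absorb an arbitrary $U$ with $|U|\in 6\mathbb Z$ whose index vector may have odd coordinates, the paper uses Lemma~\ref{lem:oddcopy} to locate one or two explicit (non-robust) copies $F_1,F_2$ of $C_6^3$ with odd coordinates; adding a suitable subset $\mathcal F'\subseteq\{F_1,F_2\}$ to $U$ repairs the parity, so that $U\cup V(\mathcal F')$ can be partitioned into $6$-sets of all-even index and fed to the absorbers. This parity-repair by a bounded set of odd-coordinate copies, rather than a universal absorber count, is the technical heart that your proposal gestures at (``absorbing the lattice-cosets separately'') but does not supply. Without Lemma~\ref{lem:even}, Lemma~\ref{lem:oddcopy}, and the restriction of Claim~\ref{clm:abs} to even index vectors, the argument does not close.
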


By Lemma~\ref{lem:absorb}, the task is reduced to finding an almost $C_6^3$-factor in the $3$-graph $H$ after removing the absorbing set $W$. In fact, we prove a more general tiling result in the following lemma.
For integers $k,h>0$, let $K_k^k{(h)}$ be a complete $k$-partite $k$-graph with $h$ vertices in each part.

\begin{lemma}[Almost perfect tiling]\label{lem:tiling}
Let $\gamma,\a > 0$, $h\in \mathbb{Z}$ and let $n$ be a sufficiently large integer. Suppose $H$ is an $n$-vertex $k$-graph with $\delta_{k-1}(H) \ge \frac{n}{k}-\gamma n$. If $H$ is not $2 \gamma$-extremal, then $H$ contains a $K_k^k{(h)}$-tiling that leaves at most $2\a n$ vertices uncovered.
\end{lemma}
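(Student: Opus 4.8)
The plan is to prove Lemma~\ref{lem:tiling} by the regularity method, specifically via the weak regularity lemma for $k$-graphs together with a fractional-to-integral argument. First I would apply the weak regularity lemma to $H$ to obtain a partition $V_0 \cup V_1 \cup \cdots \cup V_t$ with $|V_0| \le \gamma n$ and all $|V_i|$ equal, such that all but at most $\gamma t^k$ of the $k$-tuples of clusters induce $\epsilon$-regular $k$-partite subhypergraphs. Define the reduced $k$-graph $R$ on $[t]$ whose edges are those $k$-tuples that are $\epsilon$-regular and have density at least, say, $d := \gamma/2$. A standard counting argument transfers the codegree condition: after discarding a few bad clusters one gets $\delta_{k-1}(R) \ge (1/k - 2\gamma) t$ (up to lower-order terms), and one checks that if $R$ were $\gamma$-extremal in the appropriate sense then $H$ would be $2\gamma$-extremal, contradicting the hypothesis. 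So I may assume $R$ is not extremal.

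The combinatorial core is then a statement purely about $R$: every $k$-graph $R$ on $t$ vertices with $\delta_{k-1}(R) \ge (1/k - 2\gamma)t$ that is not $\gamma$-extremal contains a perfect (or almost perfect, covering all but $\alpha t$ vertices) matching. Here I would invoke the known results on the minimum codegree threshold for near-perfect matchings in $k$-graphs: by the codegree version (R\"odl--Ruci\'nski--Szemer\'edi and refinements), $\delta_{k-1}(R) \ge t/k - O(\log t)$ already forces a matching covering all but a constant number of vertices, and $n/k - \gamma n$ is comfortably above this, so even without the non-extremality hypothesis $R$ has a matching missing at most a constant, hence at most $\alpha t$, vertices. (The non-extremality is a safety margin that also handles the boundary behaviour; I expect it is strictly needed only near the extremal configuration, and one can cite the exact-threshold matching results of Treglown--Zhao or R\"odl--Ruci\'nski--Szemer\'edi.) Each edge $\{i_1, \dots, i_k\}$ of this matching in $R$ corresponds to a $k$-partite $\epsilon$-regular $k$-graph of density $\ge d$ on $(V_{i_1}, \dots, V_{i_k})$; by a routine greedy/probabilistic argument (or a direct application of a counting lemma), such a configuration contains a $K_k^k(h)$-tiling covering all but at most $\epsilon' |V_{i_1}|$ vertices of $V_{i_1} \cup \cdots \cup V_{i_k}$, provided $\epsilon \ll 1/h$ and $|V_{i_1}|$ is large.

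Assembling the pieces: the clusters left uncovered by the matching in $R$, together with $V_0$ and the uncovered remainders within each matched $k$-tuple of clusters, account for at most $(\alpha t)(n/t) + \gamma n + t \cdot \epsilon'(n/t) \le 2\alpha n$ vertices, after choosing the regularity parameters $\epsilon, \epsilon'$ and the number of clusters appropriately in terms of $\alpha$ and $\gamma$ (with $t$ large but bounded, as delivered by the regularity lemma). The union of the $K_k^k(h)$-tilings inside the matched $k$-tuples is then the desired $K_k^k(h)$-tiling of $H$ leaving at most $2\alpha n$ vertices uncovered.

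**The main obstacle** I anticipate is the bookkeeping that translates ``$H$ is not $2\gamma$-extremal'' into ``$R$ is not (appropriately) extremal,'' and more generally making sure the codegree-threshold-for-matchings input is being applied with the right constants: the matching results for $k$-graphs are cleanest for \emph{perfect} matchings under the exact threshold, but here I only need an \emph{almost} perfect matching, for which the threshold drops well below $t/k$, so the slack of $\gamma t$ should make this step robust. The other place requiring care is the counting lemma inside each matched $k$-tuple of clusters — one must verify that an $\epsilon$-regular $k$-partite $k$-graph of positive density genuinely contains an almost-spanning $K_k^k(h)$-tiling, which follows from the fact that $K_k^k(h)$ is a fixed $k$-partite $k$-graph and a standard embedding-plus-greedy-removal argument, but the quantifier order ($h$ fixed first, then $\epsilon$, then $n$) must be respected.
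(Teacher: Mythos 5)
Your high-level scaffold — weak regularity lemma, match up clusters in the reduced $k$-graph $R$, tile inside each matched $k$-tuple, transfer non-extremality from $H$ to $R$ — is the same as the paper's. But the step where you populate the matching in $R$ is wrong, and this is exactly where the paper has to do real work.

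You claim that $\delta_{k-1}(R) \ge t/k - 2\gamma t$ is ``comfortably above'' the threshold for almost-perfect matchings, that the threshold ``drops well below $t/k$'' for near-perfect matchings, and that the non-extremality of $R$ is only ``a safety margin.'' All three claims are false. The codegree threshold for a matching missing $o(t)$ vertices in a $k$-graph is $t/k$, and this is tight: the standard construction $V = X \cup Y$ with $|X| = t/k - \gamma t$ and edge set consisting of all $k$-sets meeting $X$ has $\delta_{k-1} = t/k - \gamma t$, yet every matching misses at least $k\gamma t$ vertices. So with a \emph{linear} defect $\gamma t$ below $t/k$ you cannot invoke R\"odl--Ruci\'nski--Szemer\'edi or Treglown--Zhao off the shelf — the non-extremality hypothesis is not a safety margin but is essential. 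The paper proves a bespoke intermediate result (its Lemma~\ref{almost}, ``Almost perfect matching'') under exactly this defect codegree plus the additional wrinkle that only \emph{most} $(k-1)$-sets satisfy the degree bound (a few fail because they lie in irregular tuples of the regular partition); the proof takes a maximal matching, builds a large set $D$ of ``swappable'' matched vertices, and then uses non-extremality of $H$ to find an edge inside $V_D \setminus D$ to augment the matching. Your proposal has no substitute for this argument. Also note: you would need the matching lemma to tolerate a small fraction of bad $(k-1)$-sets in $R$ (from irregular tuples), which further rules out citing clean codegree-matching theorems directly. The extremality-transfer and the greedy $K_k^k(h)$-packing inside a regular $k$-tuple are both routine, as you say, and match the paper; the missing piece is precisely the almost-perfect-matching lemma under defect codegree with a non-extremality escape clause.
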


\begin{proof}[Proof of Theorem \ref{nonex}.]
Apply Lemma~\ref{lem:absorb} and get a vertex set $W$ of order at most $\r n$ with the absorbing property.
Let $V' = V(H)\setminus W$ and $H' = H[V']$.
Note that $\delta_2(H') \ge n/3 - \r n - |W| \ge (1/3 - 2\r) |V'|$.
We claim that $H'$ is not $2\r$-extremal.
Indeed, suppose $H'$ is $2\r$-extremal, i.e., there exists a vertex set $S\subseteq V'$ of size $2|V'|/3$ such that $e_{H'}(S)\le 2\r |V'|^3$. Then by adding arbitrary $2(n - |V'|)/3 \le \r n$ vertices of $H\setminus S$ to $S$, we get a set $S'\subseteq V(H)$ of order $2n/3$ with $e_H(S') \le 2\r |V'|^3 + \r n \cdot n^{2}\le 3\r n^3$.
This means that $H$ is $3\r$-extremal, a contradiction.
So we can apply Lemma~\ref{lem:tiling} on $H'$ with $\r$, $\a/2$, $k=3$ and $h=2$.
This gives a $K_3^3(2)$-tiling of $H'$, which can be treated as a $C_6^3$-tiling of $H'$, that leaves at most $\a n$ vertices uncovered.
These uncovered vertices can be absorbed by $W$ and thus we get a $C_6^3$-factor of $H$.
\end{proof}

The rest of the paper is organized as follows. We give an outline of the proof of Lemma~\ref{lem:absorb} and then prove Lemma \ref{lem:absorb} in Section 3. We prove Lemma \ref{lem:tiling} in Section 4 and the extremal case, Theorem \ref{thm:E} in Section 5, respectively.
We also give some concluding remarks at the end of the note.

\medskip
\noindent\textbf{Notations.}
Throughout the rest of the paper, we write $C_6$ instead of $C_6^3$ for short.
For a $k$-graph $H$ and $A\subseteq V(H)$, we write $e_H(A)$ for $e(H[A])$, the number of edges in $H$ induced on $A$.
Moreover, for not necessarily distinct sets $A_1,\dots, A_k$, we write $e_H(A_1,\dots, A_k)$ as the number of edges $\{v_1,\dots, v_k\}$ in $H$ such that $v_i\in A_i$ for all $i\in [k]$.
Furthermore, for vertices $u,v$ and a vertex set $S$, let $\overline \deg_H (v, S)=\binom{|S\setminus \{v\}|}2-\deg_H (v,S)$ and $\overline \deg_H (uv, S)=|S\setminus \{u, v\}| -\deg_H (uv, S)$.
The subscript is often omitted if it is clear from the context.
Throughout this paper, $x\ll y$ means that for any $y\ge 0$ there exists $x_0\ge 0$ such that for any $x\le x_0$ the following statement holds.
Similar notations with more constants are defined similarly.

\section{Proof of the Absorbing Lemma}

\subsection{Preliminary and an outline of the proof}
Following the previous work by the absorbing method, we use the so-called reachability argument.
More precisely, for vertices $x, y$ in an $n$-vertex 3-graph $H=(V, E)$ and a set $S\subseteq V\setminus \{x,y\}$, we call $S$ a \emph{reachable $|S|$-set} for $x$ and $y$ if both $H[\{x\}\cup S]$ and $H[\{y\} \cup S]$ contain $C_6$-factors.
We say two vertices $x, y$ are \emph{$(\beta, i)$-reachable in $H$} if there are at least $\beta n^{6i-1}$ reachable $(6i-1)$-sets for $x$ and $y$ in $H$.
We say a vertex set $U$ is \emph{$(\beta, i)$-closed in $H$} if every two vertices of $U$ are $(\beta, i)$-reachable.
For $x\in V$, let $\tilde{N}_{\beta, i}(x)$ be the set of vertices that are $(\beta, i)$-reachable to $x$.

We use some notations in \cite{KM1}.
For an integer $r\ge 1$, let $\cP=\{V_1,\dots, V_r\}$ be a partition of $V$. 
The \emph{index vector} $\mathbf{i}_{\cP}(S)\in \mathbb{Z}^r$ of a subset $S\subset V$ with respect to $\cP$ is the vector whose coordinates are the sizes of the intersections of $S$ with each part of $\cP$, i.e., $\mathbf{i}_{\cP}(S)_{V_i}=|S\cap V_i|$ for $i\in [r]$. 
We call a vector $\mathbf{i}\in \mathbb{Z}^r$ an \emph{$s$-vector} if all its coordinates are nonnegative and their sum equals $s$. 
Given $\mu>0$, a 3-vector $\mathbf{v}$ is called a $\mu$\emph{-robust edge-vector} if at least $\mu n^3$ edges $e\in E$ satisfy $\mathbf{i}_\cP(e)=\mathbf{v}$. 
A $6$-vector $\mathbf{v}$ is called a \emph{$\mu$-robust $C_6$-vector} if there are at least $\mu n^6$ copies $K$ of $C_6$ in $H$ satisfy $\mathbf{i}_\cP(V(K))=\mathbf{v}$. 
Let $I_{\cP}^{\mu}(H)$ be the set of all $\mu$-robust edge-vectors and let $I_{\cP, C}^{\mu}(H)$ be the set of all $\mu$-robust $C_6$-vectors.
For $j\in [r]$, let $\mathbf{u}_j\in \mathbb{Z}^r$ be the $j$-th \emph{unit vector}, namely, $\bfu_j$ has 1 on the $j$-th coordinate and 0 on other coordinates.
A \emph{transferral} is the a vector of form $\bfu_i - \bfu_j$ for some distinct $i,j\in [r]$.
Let $L_{\cP, C}^{\mu}(H)$ be the lattice (i.e., the additive subgroup) generated by $I_{\cP, C}^{\mu}(H)$ (though $L_{\cP, C}^{\mu}(H)$ will not be explicitly used in the proof).

The proof of Lemma~\ref{lem:absorb} proceeds as follows.
Given an $n$-vertex 3-graph $H=(V, E)$ with $\delta_2(H)\ge (1/3 - \r)n$.
We first show that (Lemma~\ref{lem:partition}) there exists some $\beta$, $i$ (independent of $n$) such that $V$ admits a partition $\cP$ of at most three parts, such that each part is $(\beta, i)$-closed.
Next we show that (Lemma~\ref{lem:trans}) if $L_{\cP, C}^{\mu}(H)$ contains a transferral $\bfu_i - \bfu_j$, then $V_i\cup V_j$ is closed.
In this case we combine these two parts and thus reduce the number of parts in the partition.
We repeatedly merge parts until there is no transferral in $\cP$ (let us still call the new partition obtained from merging $\cP$).
Then we show that (Lemma~\ref{lem:even}) if $\cP$ contains no transferral, then all 6-vectors with all coordinates even must be present in $I_{\cP, C}^{\mu}(H)$.
Although by our assumption, there is no robust $C_6$-vector with odd coordinates in $\cP$ (this together with some vector above will give a transferral), we can show that (Lemma~\ref{lem:oddcopy}) there exists one copy of $C_6$ with odd coordinates, which turns out to be sufficient for the absorption (see the proof of Lemma~\ref{lem:absorb}).

As mentioned in Section 1, in most of existing applications of the absorbing method, it is shown that $V(H)$ is $(\beta', i')$-closed for some $\beta'>0$ and integer $i'$, which implies the absorbing lemma easily. It is interesting to know whether this holds for our problem.

\subsection{Lemmas}

We use the following result from \cite{LM1}.

\begin{proposition}[\cite{LM1}, Proposition 2.1] \label{prop:21}
For $\beta, \e>0$ and integers $i_0'>i_0$, there exists $\beta'>0$ such that the following holds for sufficiently large $n$. Given an $n$-vertex $3$-graph $H$ and a vertex $x\in V(H)$ with $|\tilde{N}_{\beta, i_0}(x)|\ge \e n$, then $\tilde{N}_{\beta, i_0}(x)\subseteq \tilde{N}_{\beta', i_0'}(x)$.
In other words, if $x,y\in V(H)$ are $(\beta,i_0)$-reachable in $H$ and $|\tilde{N}_{\beta, i_0}(x)|\ge \e n$, then $x,y$ are $(\beta',i_0')$-reachable in $H$.
\end{proposition}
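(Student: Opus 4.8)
The plan is to reduce the statement to two facts, using the hypothesis $|\tilde{N}_{\beta, i_0}(x)|\ge \e n$ only in the first of them. First, I would show that $|\tilde{N}_{\beta, i_0}(x)|\ge \e n$ already forces $H$ to contain $\Omega(n^6)$ copies of $C_6$. Second, I would show that whenever $H$ has $\Omega(n^6)$ copies of $C_6$, any reachable set for a pair of vertices can be enlarged by appending a vertex-disjoint copy of $C_6$, which raises the reachability index by exactly one. Applying the second fact $i_0'-i_0$ times then yields the conclusion.

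For the first fact, fix some $y\in \tilde{N}_{\beta, i_0}(x)$; then $y\neq x$, and there are at least $\e n$ choices of such $y$. Each of the at least $\beta n^{6i_0-1}$ reachable $(6i_0-1)$-sets $S$ for $x$ and $y$ has the property that $H[\{y\}\cup S]$ contains a $C_6$-factor; since $x\notin\{y\}\cup S$, every copy of $C_6$ in this factor avoids $x$, so we may fix one such copy and call its vertex set $D$. Ranging over all admissible $y$ and $S$ produces at least $\e\beta n^{6i_0}$ triples $(y,S,D)$, while a fixed $C_6$-copy $D$ can lie in at most $O(n^{6i_0-6})$ of them: $D$ already accounts for six of the $6i_0$ vertices of $\{y\}\cup S$, leaving $O(n^{6i_0-6})$ possibilities for the pair $(y,S)$. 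Hence $H$ contains at least $\mu n^6$ copies of $C_6$ for some $\mu=\mu(\beta,\e,i_0)>0$.

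For the second fact, let $y\in\tilde{N}_{\beta, i_0}(x)$ be arbitrary. The observation is that if $S$ is a reachable $(6j-1)$-set for $x$ and $y$ and $D$ is the vertex set of a copy of $C_6$ vertex-disjoint from $S\cup\{x,y\}$, then $S\cup D$ is a reachable $(6(j+1)-1)$-set for $x$ and $y$: a $C_6$-factor of $H[\{x\}\cup S]$ together with the copy on $D$ is a $C_6$-factor of $H[\{x\}\cup S\cup D]$, and symmetrically for $y$. Since $|S\cup\{x,y\}|=6j+1$ is bounded, all but $O(n^5)$ of the $\mu n^6$ copies of $C_6$ avoid this set, so there are at least $(\mu/2)n^6$ admissible $D$; and each set $S\cup D$ is obtained from at most $\binom{6j+5}{6}$ pairs $(S,D)$. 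Consequently, if $x$ and $y$ are $(\beta_j,j)$-reachable then they are $(\beta_{j+1},j+1)$-reachable with $\beta_{j+1}:=\mu\beta_j/\bigl(2\binom{6j+5}{6}\bigr)$. Iterating from $j=i_0$ up to $j=i_0'$, which is finitely many steps since $i_0'-i_0$ is a constant, shows that $y$ is $(\beta',i_0')$-reachable to $x$ with $\beta':=\beta_{i_0'}$, a quantity depending only on $\beta,\e,i_0,i_0'$. As $y\in\tilde{N}_{\beta,i_0}(x)$ was arbitrary, both assertions of the proposition follow.

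I expect the main obstacle to be the first fact: one has to manufacture a positive proportion of all conceivable $C_6$-copies from the purely local information that $\tilde{N}_{\beta,i_0}(x)$ is large, and it is precisely here that the quantitative bound $|\tilde{N}_{\beta,i_0}(x)|\ge\e n$, rather than merely the nonemptiness of $\tilde{N}_{\beta,i_0}(x)$, is used. Once $\Omega(n^6)$ copies of $C_6$ are in hand, the padding argument is mechanical; the only bookkeeping is the geometric decay of the reachability constant over the $i_0'-i_0$ padding steps, which is harmless because that number of steps is fixed.
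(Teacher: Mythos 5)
The paper states this proposition as a citation from \cite{LM1} and does not reproduce a proof, so there is no in-paper argument to compare against. That said, your argument is correct and is the standard route to this kind of ``index-boosting'' statement: the two steps (first, that $|\tilde N_{\beta,i_0}(x)|\ge\e n$ forces $\Omega(n^6)$ copies of $C_6^3$ in $H$ via a double count over triples $(y,S,D)$ where $D$ is a $C_6^3$-copy extracted from a $C_6^3$-factor of $H[\{y\}\cup S]$; second, that a rich supply of $C_6^3$-copies lets you append a disjoint copy to any reachable $(6j-1)$-set, raising the index by one at the cost of the bounded overcounting factor $\binom{6j+5}{6}$) are both sound, and iterating the second step $i_0'-i_0$ times produces a constant $\beta'$ depending only on $\beta,\e,i_0,i_0'$, exactly as required. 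One bookkeeping remark: when you bound the number of triples containing a fixed $D$ you should, as you essentially do, extend $D$ to a $6i_0$-set and then choose which of its elements plays the role of $y$; this gives at most $6i_0\binom{n-6}{6i_0-6}=O(n^{6i_0-6})$, so the conclusion $\Omega(n^6)$ distinct copies is justified.
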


We show that for every vertex $x$, $|\tilde{N}_{\beta, 1}(x)|$ is large in the next proposition.

\begin{proposition}\label{prop:Nv}
Fix $0< \beta \ll b$ and let $n$ be sufficiently large.
Suppose $H=(V, E)$ is a 3-graph on $n$ vertices satisfying $\delta_{2}(H)\ge b n$. For any $x\in V(H)$, $|\tilde{N}_{\beta, 1}(x)| \ge (b - \sqrt[3]\beta) n$.
\end{proposition}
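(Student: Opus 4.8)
The plan is a direct double-counting argument. Since $|S|=5$, a reachable $5$-set for a pair $x,y$ is simply a $5$-set $S\subseteq V\setminus\{x,y\}$ for which both $\{x\}\cup S$ and $\{y\}\cup S$ span a copy of $C_6$, so it suffices to produce, for at least $(b-\sqrt[3]\beta)n$ vertices $z$, a family of $\ge\beta n^5$ such $5$-sets witnessing that $x$ and $z$ are $(\beta,1)$-reachable. The structural point is that $C_6$ (with edges $123,345,561$) has vertices lying in exactly one edge, e.g.\ vertex $2$; I place $x$ at such a pendant position. Call an ordered tuple $(v_1,v_3,v_4,v_5,v_6)$ of distinct vertices in $V\setminus\{x\}$ a \emph{rooted copy at $x$} if $\{v_1,x,v_3\},\{v_3,v_4,v_5\},\{v_5,v_6,v_1\}\in E$. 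Then for any vertex $z\notin\{x,v_1,v_3,v_4,v_5,v_6\}$ with $\{v_1,z,v_3\}\in E$ (a \emph{swap vertex} for this rooted copy), the set $S:=\{v_1,v_3,v_4,v_5,v_6\}$ is a reachable $5$-set for $x$ and $z$, since $\{z\}\cup S$ again spans $C_6$; moreover each $5$-set arises from at most $5!=120$ rooted copies.

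First I would lower bound the number $P$ of rooted copies at $x$. From $\delta_2(H)\ge bn$ a standard double count gives $2\deg_H(v)=\sum_{u\ne v}\deg_H(\{u,v\})\ge(n-1)bn$, so $\delta_1(H)\ge(1-o(1))bn^2/2$. Building a rooted copy by choosing the edge through $x$ (there are $2\deg_H(x)$ ordered choices of $(v_1,v_3)$), then an edge through $v_3$ (about $2\deg_H(v_3)$ ordered choices of $(v_4,v_5)$), then $v_6$ via $\deg_H(v_5v_1)\ge bn$ — and discarding the $O(n^4)$ tuples where two chosen vertices coincide — yields $P\ge(1-o(1))b^3n^5$. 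Next I double count pairs (rooted copy, swap vertex): each rooted copy has at least $\deg_H(v_1v_3)-4\ge bn-4$ swap vertices, whence
\[
\sum_{z\in V(H)}\bigl(\text{number of rooted copies at }x\text{ in which }z\text{ is a swap vertex}\bigr)\ \ge\ P(bn-4).
\]

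Finally comes the averaging step. Let $Y$ be the set of $z\ne x$ that are a swap vertex in at least $120\beta n^5$ rooted copies; by the remark above each such $z$ lies in $\tilde{N}_{\beta,1}(x)$. Since each $z$ is a swap vertex in at most $P$ rooted copies, the left side above is at most $|Y|P+120\beta n^6$, and combining with the lower bound gives $|Y|\ge bn-4-120\beta n^6/P\ge bn-4-(1+o(1))(120\beta/b^3)n$. As $\beta\ll b$ (so $120\beta^{2/3}<b^3$) and $n$ is large, the error terms are dominated by $\sqrt[3]\beta\,n$, so $|\tilde{N}_{\beta,1}(x)|\ge|Y|\ge(b-\sqrt[3]\beta)n$. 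The only delicate points, neither serious, are (i) checking that the distinctness constraints on $v_1,\dots,v_6$ and on $z$ cost only lower-order terms in $P$ and in the count of swap vertices, and (ii) verifying that the constant $120/b^3$ in the error is genuinely absorbed by $\sqrt[3]\beta$ — this is exactly where the hypothesis $\beta\ll b$ is used.
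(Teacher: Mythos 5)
Your proof is correct, but it takes a different route from the paper. The paper's argument splits into two cleaner steps: first it proves the auxiliary claim that any $x'$ with $|N(x)\cap N(x')|\ge\sqrt\beta\,n^2$ automatically lies in $\tilde N_{\beta,1}(x)$ (fix a pair $\{y,z\}\in N(x)\cap N(x')$, pick any $u$, then $v\in N(yu)$ and $w\in N(zu)$, giving $\ge\frac{1}{5!}\sqrt\beta n^2(n-4)(bn-5)(bn-6)\ge\beta n^5$ reachable $5$-sets); second, it double counts over pairs $S\in N(x)$,
\[
|N(x)|\,\delta_2(H)\ \le\ \sum_{S\in N(x)}\deg(S)\ <\ |\tilde N_{\beta,1}(x)|\cdot|N(x)|+n\cdot\sqrt\beta\,n^2,
\]
and divides by $|N(x)|\ge b\binom{n-1}{2}$. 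In effect, the paper counts the objects ``$(S,x')$ with $S\in N(x)\cap N(x')$'' — a single common edge — whereas you count the finer objects ``(rooted $C_6$-copy at $x$, swap vertex $z$)'' and then collapse $5$-tuples to $5$-sets by the factor $5!$. Both arguments exploit the same structural fact, that $x$ sits at a degree-one vertex of $C_6$ and can be swapped for any $z$ with $\{v_1,z,v_3\}\in E$, but the paper's factorization through the codegree threshold $\sqrt\beta\,n^2$ is shorter and isolates the useful intermediate statement ``large common codegree implies reachability.'' Your computation of $P\ge(1-o(1))b^3n^5$ and the final error $O(\beta/b^3)\,n\ll\sqrt[3]\beta\,n$ is fine (using $\beta\ll b$, as you note), though the distinctness bookkeeping is a bit heavier than in the paper's version.
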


\begin{proof}
Fix a vertex $x\in V$, we claim that for any vertex $x'\neq x$, $x'\in \tilde{N}_{\beta, 1}(x)$ if $|N(x)\cap N(x')|\ge \sqrt{\beta} n^{2}$.
Indeed, let $\{y, z\}$ be a pair in $N(x)\cap N(x')$. 
Pick a vertex $u\in V\setminus \{x, x', y, z\}$ and pick vertices $v\in N(y, u)$ and $w\in N(z, u)$, distinct from the existing vertices.
Note that both $\{x, y, z, u, v, w\}$ and $\{x', y, z, u, v, w\}$ span copies of $C_6$ in $H$ and thus $\{y, z, u, v, w\}$ is a reachable 5-set for $x$ and $x'$.
Since the number of choices for the reachable 5-sets is at least
\[
\frac 1{5!} \sqrt{\beta} n^2 (n-4) (b n -5) (b n - 6) \ge \beta n^5,
\]
because $\beta \ll b$ and $n$ is large enough, we have that $x'\in \tilde{N}_{\beta, 1}(x)$.

Note that $\delta_{2}(H)\ge b n$ implies that $\delta_{1}(H)\ge b \binom{n-1}{2}$. By double counting, we have
\[
 |N(x)|\delta_{2}(H) \le \sum_{S\in N(x)} \deg(S) < |\tilde{N}_{\beta, 1}(x)|\cdot |N(x)|+n\cdot \sqrt{\beta} {n}^{2}.
\]
Thus, $|\tilde{N}_{\beta, 1}(x)|> \delta_{2}(H) - \frac{\sqrt{\beta} n^3}{|N(x)|}\ge (b - \sqrt[3]\beta) n$ as $|N(x)|\ge \delta_1(H)\ge b \binom{n-1}{2}$.
\end{proof}

Propositions~\ref{prop:21} and \ref{prop:Nv} give the following corollary.

\begin{corollary}\label{coro:21}
For $0<\beta \ll b$ and integers $i_0'>i_0$, there exists $\beta'>0$ such that the following holds for sufficiently large $n$. 
Given an $n$-vertex $3$-graph $H$ with $\delta_2(H)\ge b n$. 
If $x,y\in V(H)$ are $(\beta,i_0)$-reachable in $H$, then $x,y$ are $(\beta',i_0')$-reachable in $H$.
\end{corollary}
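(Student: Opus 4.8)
The plan is to combine Propositions~\ref{prop:Nv} and~\ref{prop:21}: the former shows the reachability neighbourhood $\tilde N_{\beta,1}(x)$ is linear in size, and the latter upgrades the reachability index at the cost of a weaker constant. Since $\beta\ll b$, Proposition~\ref{prop:Nv} gives $|\tilde N_{\beta,1}(x)|\ge (b-\sqrt[3]\beta)n\ge \frac b2 n$ for every $x\in V(H)$, so I will fix $\e:=b/2$ throughout and feed it into Proposition~\ref{prop:21}.

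When $i_0=1$ this is immediate: apply Proposition~\ref{prop:21} with parameters $\beta$, $\e$, $i_0=1$ and the given $i_0'$ to produce $\beta'>0$ with $\tilde N_{\beta,1}(x)\subseteq \tilde N_{\beta',i_0'}(x)$; since the hypothesis gives $y\in \tilde N_{\beta,1}(x)$, the vertices $x$ and $y$ are $(\beta',i_0')$-reachable. For $i_0>1$ there is a small wrinkle, because Proposition~\ref{prop:21} requires $|\tilde N_{\beta,i_0}(x)|\ge \e n$, which we are not handed directly. I would first run Proposition~\ref{prop:21} on $\tilde N_{\beta,1}(x)$ with target index $i_0$ to get some $\beta_1>0$ with $\tilde N_{\beta,1}(x)\subseteq \tilde N_{\beta_1,i_0}(x)$, whence $|\tilde N_{\beta_1,i_0}(x)|\ge \frac b2 n$. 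Setting $\beta_2:=\min\{\beta,\beta_1\}$ makes both $y\in \tilde N_{\beta_2,i_0}(x)$ (as $\beta_2\le\beta$) and $|\tilde N_{\beta_2,i_0}(x)|\ge \frac b2 n$ hold, and a final application of Proposition~\ref{prop:21} with parameters $\beta_2$, $\e$, $i_0$, $i_0'$ yields the desired $\beta'$.

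The main (and essentially only) obstacle is precisely this index mismatch in the hypothesis of Proposition~\ref{prop:21}: one only knows the single vertex $y$ lies in $\tilde N_{\beta,i_0}(x)$, not that this set is linear. Bridging it requires the preliminary bootstrap from level $1$ to level $i_0$ using Proposition~\ref{prop:Nv}. Once that is in place, the remainder is bookkeeping: one checks that the final $\beta'$ depends only on $\beta$, $b$, $i_0$ and $i_0'$, and not on $n$, and that all the set inclusions above are monotone in $\beta$ so that the minima taken do no harm.
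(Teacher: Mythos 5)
Your proposal is correct and fills in exactly the argument the paper leaves implicit when it says that Propositions~\ref{prop:21} and~\ref{prop:Nv} give the corollary: use Proposition~\ref{prop:Nv} to get $|\tilde N_{\beta,1}(x)|\ge \tfrac{b}{2}n$, bootstrap via Proposition~\ref{prop:21} from level $1$ to level $i_0$ to see that $\tilde N_{\beta_2,i_0}(x)$ is linear for $\beta_2=\min\{\beta,\beta_1\}$, and then apply Proposition~\ref{prop:21} once more to reach level $i_0'$. The care you take with the index mismatch for $i_0>1$ and the monotonicity of $\tilde N_{\cdot,i}(x)$ in the first parameter is exactly what is needed, and your final $\beta'$ indeed depends only on $\beta,b,i_0,i_0'$.
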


The following lemma gives a useful partition of $V(H)$.
For its proof (in a more general form), see \cite{HZ_cycpac} (similar proofs can be found in \cite{Han14_poly, HLTZ_K4, HZZ_tiling}).

\begin{lemma}\label{lem:partition}
Given $0< \gamma \ll 1$, there exists $\beta>0$ such that the following holds for sufficiently large $n$. Let $H$ be an n-vertex 3-graph with $\delta_2(H) \ge (\frac{1}{3}-\gamma){n}$. Then there is a partition $\cP$ of $V(H)$ into $V_1, \dots, V_r$ with $r \le 3$ such that for any $i \in [r]$, $|V_i| \ge (\frac13 - 2\r) n$ and $V_i$ is $(\beta, 4)$-closed in $H$.
\end{lemma}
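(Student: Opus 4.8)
The plan is to build the partition greedily from the reachability relation, using Corollary~\ref{coro:21} to boost reachability to the uniform index $i=4$ at the end. First I would fix $\beta_0 \ll \gamma$ and work with the relation ``$x \sim y$ iff $x,y$ are $(\beta_0,1)$-reachable in $H$''. By Proposition~\ref{prop:Nv} applied with $b = \tfrac13 - \gamma$, every vertex $x$ satisfies $|\tilde N_{\beta_0,1}(x)| \ge (\tfrac13 - \gamma - \sqrt[3]{\beta_0})n \ge (\tfrac13 - 2\gamma)n$, so each vertex has a large reachability neighborhood. The key structural input is that this relation is ``almost transitive'' in the following averaging sense: if $|\tilde N_{\beta_0,1}(x) \cap \tilde N_{\beta_0,1}(y)|$ is at least, say, $\gamma n$, then $x$ and $y$ should be $(\beta_1, 2)$-reachable for some $\beta_1 = \beta_1(\beta_0,\gamma) > 0$, because one can concatenate a reachable $5$-set for $x,z$ with a reachable $5$-set for $z,y$ through a common vertex $z$ in the intersection (deleting overlaps and re-embedding costs only a polynomial factor, and there are $\gtrsim \gamma n$ choices of $z$).

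Next I would extract the parts. Pick any vertex $v_1$ and let $V_1' = \tilde N_{\beta_0,1}(v_1)$; this has size $\ge (\tfrac13 - 2\gamma)n$. If $V \setminus V_1'$ still has $\ge (\tfrac13 - 2\gamma)n$ vertices, pick $v_2$ outside and set $V_2' = \tilde N_{\beta_0,1}(v_2)$, and so on. Since each $V_i'$ has size $> n/4$, after at most three rounds the leftover set $R$ has size $< (\tfrac13 - 2\gamma)n < n/3$. Each leftover vertex $w \in R$ has $|\tilde N_{\beta_0,1}(w)| \ge (\tfrac13 - 2\gamma)n$, so by a pigeonhole/counting argument $\tilde N_{\beta_0,1}(w)$ must meet some $V_i'$ in $\ge \gamma n$ vertices (the total is $\ge (\tfrac13-2\gamma)n$ spread over at most $3$ sets plus a small remainder), hence $w$ is $(\beta_1,2)$-reachable to $v_i$; assign $w$ to that part. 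Similarly, two ``centers'' $v_i, v_j$ whose neighborhoods overlap in $\ge \gamma n$ vertices get merged. After cleaning up, I obtain a partition $V_1, \dots, V_r$ with $r \le 3$, $|V_i| \ge (\tfrac13 - 2\gamma)n$, and every pair within a part $(\beta_1, 2)$-reachable. Finally, apply Corollary~\ref{coro:21} with $b = \tfrac13 - \gamma$, $i_0 = 2$, $i_0' = 4$ to upgrade this to $(\beta, 4)$-closedness for a suitable $\beta = \beta(\beta_1, \gamma) > 0$, which is the required conclusion.

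The main obstacle, and the reason the lemma is cited to \cite{HZ_cycpac} rather than proved here, is the ``almost transitivity'' step: making precise that a large overlap of reachability neighborhoods yields reachability at a bounded, uniform index, and controlling the bookkeeping so that the index does not grow with each merge. One must argue that at most a bounded number of merges occur (here trivially $\le 3$ since each part has size $> n/4$), that the parameter $\beta$ only degrades a bounded number of times, and that reachable sets built by concatenation genuinely induce $C_6$-factors on the relevant vertex sets (the overlap-removal and re-embedding must keep the $C_6$'s vertex-disjoint and spanning). The minimum codegree $\delta_2(H) \ge (\tfrac13 - \gamma)n$ is what makes every such local extension step available with polynomially many choices, and the bound $|V_i| \ge (\tfrac13 - 2\gamma)n$ falls out automatically because each part contains a full reachability neighborhood $\tilde N_{\beta_0,1}(v_i)$.
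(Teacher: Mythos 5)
The paper does not prove this lemma; it cites a more general version in \cite{HZ_cycpac} and notes that similar proofs appear in \cite{Han14_poly, HLTZ_K4, HZZ_tiling}. So there is no in-paper proof to compare against, and I can only assess your sketch on its own merits.

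Your overall strategy is the right one: use Proposition~\ref{prop:Nv} to get $|\tilde N_{\beta_0,1}(v)| \ge (\tfrac13 - 2\gamma)n$ for all $v$, merge via overlapping reachable neighborhoods, and finish by normalizing the index with Corollary~\ref{coro:21}. But there is a genuine gap at the step you quietly assert: ``every pair within a part $(\beta_1,2)$-reachable.'' Your construction places $w$ in $V_i$ when $|\tilde N_{\beta_0,1}(w) \cap \tilde N_{\beta_0,1}(v_i)| \ge \gamma n$, which does give $(\beta_1,2)$-reachability between $w$ and the \emph{hub} $v_i$. It does not give reachability between two non-hub vertices $w, w' \in V_i$: concatenating an $11$-set for $w,v_i$ with an $11$-set for $w',v_i$ through the single fixed vertex $v_i$ produces reachable $23$-sets, but only $\sim n^{22}$ of them, short by a factor of $n$ of what $(\beta,4)$-reachability requires. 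The correct concatenation needs $\Omega(n)$ intermediate vertices common to the reachable neighborhoods of $w$ and $w'$, and that does not follow from each separately meeting $\tilde N_{\beta_0,1}(v_i)$ in $\ge \gamma n$ vertices, since those two overlaps are subsets of a set of size $\approx n/3$ and can be disjoint. This is exactly the bookkeeping you identify as the ``main obstacle,'' but it is not a cosmetic issue that can be waved away; it is the content of the lemma. The cited proofs handle it by iterating a merge step in which the active parts are genuinely closed (at some index) at every stage, merges are triggered only by a robust $\Omega(n)$-size common overlap, and the index at most doubles per merge; with at most two merges from three parts this lands exactly at index~$4$. A secondary issue: you claim $|V_i| \ge (\tfrac13 - 2\gamma)n$ ``falls out automatically because each part contains $\tilde N_{\beta_0,1}(v_i)$,'' but your assignment criterion (large overlap of neighborhoods with $v_i$) is not the same as membership in $\tilde N_{\beta_0,1}(v_i)$, so that inclusion is not established.
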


We apply Lemma \ref{lem:partition} on $H$ and get the partition $\cP=\{V_1,\dots, V_r\}$ such that each part is closed. 
Next lemma says that if $I_{\cP, C}^{\mu} (H)$ contains two 6-vectors whose difference equals a transferral $\bfu_i - \bfu_j$ for distinct $i,j\in [r]$, then $V_i\cup V_j$ is closed.
Note that our assumption here is stronger than assuming that $L_{\cP, C}^{\mu}(H)$ contains a transferral.

\begin{lemma}\label{lem:trans}
Given $\beta, \mu, b, r, c>0$, there exists $\beta'>0$ such that the following holds for sufficiently large $n$.
Let $H$ be an $n$-vertex 3-graph with $\delta_2(H)\ge b n$.
Let $\cP=\{V_1, \dots, V_r\}$ be a partition of $V(H)$ such that for any $i \in [r]$, $V_i$ is $(\beta, c)$-closed in $H$.
For distinct $i,j\in [r]$, $V_i\cup V_j$ is $(\beta', 7c+1)$-closed in $H$ if both 6-vectors
$(b_1,\dots, b_r), (b_1, \dots, b_r) + \bfu_i - \bfu_j \in I_{\cP, C}^{\mu} (H)$.
\end{lemma}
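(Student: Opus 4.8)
The plan is to show that any two vertices $x,y\in V_i\cup V_j$ are $(\beta',7c+1)$-reachable. If $x,y$ lie in the same part, say both in $V_i$, they are $(\beta,c)$-reachable by hypothesis, and Corollary~\ref{coro:21} (applied with $i_0=c$ and $i_0'=7c+1$) upgrades this to $(\beta',7c+1)$-reachability. So assume $x\in V_i$ and $y\in V_j$ with $i\neq j$. The workhorse is the following composition property, immediate from the definition of a reachable set: if $R$ is a reachable set for $a,b$ and $R'$ is a reachable set for $b,c$, with $R,R',\{a,b,c\}$ pairwise disjoint, then $R\cup\{b\}\cup R'$ is a reachable set for $a$ and $c$ (split $\{a\}\cup R\cup\{b\}\cup R'$ as $(\{a\}\cup R)\sqcup(\{b\}\cup R')$, and $\{c\}\cup R\cup\{b\}\cup R'$ as $(\{b\}\cup R)\sqcup(\{c\}\cup R')$). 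Hence it suffices to produce, for sufficiently many choices, a \emph{transferral gadget}: a vertex $w\in V_i$, a vertex $w'\in V_j$, and a set $G$ disjoint from $\{x,y,w,w'\}$ that is a reachable set for $w$ and $w'$. One then splices $G$ between a reachable set for $x,w$ (from $(\beta,c)$-closedness of $V_i$) and a reachable set for $w',y$ (from $(\beta,c)$-closedness of $V_j$).

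\textbf{Building the gadget.} We exploit that $\mathbf{v}:=(b_1,\dots,b_r)$ and $\mathbf{v}+\bfu_i-\bfu_j$ are both $\mu$-robust $C_6$-vectors. Since both are nonnegative $6$-vectors, $b_j\ge 1$ and $b_i+1\ge 1$, so a copy of $C_6$ with index vector $\mathbf{v}$ meets $V_j$ and a copy with index vector $\mathbf{v}+\bfu_i-\bfu_j$ meets $V_i$. By robustness, pick vertex-disjoint such copies $K_1$ (index vector $\mathbf{v}$) and $K_2$ (index vector $\mathbf{v}+\bfu_i-\bfu_j$), disjoint also from $\{x,y\}$, and pick $w'\in V(K_1)\cap V_j$ and $w\in V(K_2)\cap V_i$, chosen whenever possible to be degree-one vertices of their copies. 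The two $5$-sets $V(K_1)\setminus\{w'\}$ and $V(K_2)\setminus\{w\}$ have the same index vector $\mathbf{v}-\bfu_j$. Set $G:=(V(K_1)\setminus\{w'\})\cup(V(K_2)\setminus\{w\})\cup Q$ for a set $Q$, to be chosen, with $|Q|\equiv 1\pmod 6$ and disjoint from everything so far. Since $\{w\}\cup G\supseteq V(K_2)$ and $\{w'\}\cup G\supseteq V(K_1)$, it then suffices that both $(V(K_1)\setminus\{w'\})\cup Q$ and $(V(K_2)\setminus\{w\})\cup Q$ admit $C_6$-factors.

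\textbf{Choosing $Q$ and counting.} Using $\delta_2(H)\ge bn$, pick a vertex $q^*$ that is a \emph{structural substitute} for $w'$ in $K_1$ (i.e. $(V(K_1)\setminus\{w'\})\cup\{q^*\}$ is again a copy of $C_6$) and a vertex $q^{**}$ that is a structural substitute for $w$ in $K_2$, arranged to lie in a common part $V_\ell$ of $\cP$; let $R^*$ be a reachable set for $q^*$ and $q^{**}$, which exists in abundance by $(\beta,c)$-closedness of $V_\ell$, and set $Q:=R^*\cup\{q^*,q^{**}\}$, all pairwise disjoint and disjoint from the earlier choices. Then $(V(K_1)\setminus\{w'\})\cup Q=\bigl[(V(K_1)\setminus\{w'\})\cup\{q^*\}\bigr]\sqcup\bigl[\{q^{**}\}\cup R^*\bigr]$ has a $C_6$-factor, and symmetrically $(V(K_2)\setminus\{w\})\cup Q=\bigl[(V(K_2)\setminus\{w\})\cup\{q^{**}\}\bigr]\sqcup\bigl[\{q^*\}\cup R^*\bigr]$ has one, so $G$ is a reachable set for $w$ and $w'$. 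It remains to count: $K_1,K_2$ range over $\Omega(n^6)$ choices each, the reachable sets $R_x$ (for $x,w$), $R_y$ (for $w',y$) and $R^*$ over $\Omega(n^{6c-1})$ choices each (by closedness), and $q^*,q^{**}$ over $\Omega(n)$ choices; all set-sizes involved are absolute constants, so the number of tuples producing a given reachable set is bounded, and the splice yields at least $\beta'' n^{N}$ reachable $N$-sets for $x,y$ for a constant $N\le 6(7c+1)-1$. A final application of Corollary~\ref{coro:21} brings the index up to $7c+1$, giving $(\beta',7c+1)$-reachability for a suitable $\beta'>0$ independent of $x,y$.

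\textbf{The main obstacle.} The delicate point is the purely structural (not index-theoretic) bookkeeping hidden in ``whenever possible'' and ``arranged to lie in a common part''. A robust copy need not have a degree-one vertex in the prescribed part $V_i$ or $V_j$: this can fail precisely when that part meets the copy in few vertices and those all sit among the three degree-two vertices of $C_6$. Moreover, even when $w'$ is degree-one with edge $\{w',p_1,p_2\}$ in $K_1$, the set of valid substitutes $N(\{p_1,p_2\})$ has size only $\ge bn$ and can concentrate inside a single part of $\cP$ when $b\approx 1/3$, so matching up the parts of $q^*$ and $q^{**}$ is not automatic. Resolving this requires a pigeonhole/density argument over the $\Omega(n^6)$ robust copies — using $r\le 3$ and $\delta_2(H)\ge bn$ — possibly combined with substitutions of degree-two vertices (which uses the codegree bound a second time, since such a vertex lies in two edges of the copy). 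This case analysis is what fixes the constants and accounts for the exponent being $7c+1$ rather than something smaller.
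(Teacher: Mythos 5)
The overall strategy (two vertex-disjoint robust copies of $C_6$ with the stated index vectors, spliced to $x$ and $y$ via reachable sets) is the same as the paper's. But your gadget construction diverges from the paper's at a crucial point, and the divergence is exactly where your acknowledged gap lives. You correctly observe that $V(K_1)\setminus\{w'\}$ and $V(K_2)\setminus\{w\}$ have the same index vector $\mathbf{v}-\bfu_j$, but you do not use this observation. That equality means the ten remaining vertices can be paired up as $\{u_1,v_1\},\dots,\{u_5,v_5\}$ with $u_m\in V(K_2)\setminus\{w\}$ and $v_m\in V(K_1)\setminus\{w'\}$ lying in the same part of $\cP$ for each $m$. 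Since each part is $(\beta,c)$-closed, each such pair has $\Omega(n^{6c-1})$ reachable $(6c-1)$-sets $S_m$, and the correct gadget is $G := (V(K_1)\setminus\{w'\})\cup(V(K_2)\setminus\{w\})\cup S_1\cup\cdots\cup S_5$. Then $\{w'\}\cup G = V(K_1)\sqcup\bigsqcup_{m\in[5]}(S_m\cup\{u_m\})$ and $\{w\}\cup G = V(K_2)\sqcup\bigsqcup_{m\in[5]}(S_m\cup\{v_m\})$ both have $C_6$-factors, so $G$ is reachable for $w,w'$; no structural substitution, no degree-one bookkeeping, and no separate set $Q$ is needed. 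This is exactly the paper's gadget (up to relabelling), and the exponent $7c+1$ then falls out of the seven $(6c-1)$-sets ($S_1,\dots,S_5$ together with the two linking sets for $x$--$w$ and $w'$--$y$) plus the twelve vertices of $K_1\cup K_2$, namely $12+7(6c-1)=42c+5=6(7c+1)-1$; it is not a by-product of any structural case analysis.

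Your version as written has a genuine gap, which you are candid about but do not resolve. First, there is no guarantee that $w'$ (resp.\ $w$) can be taken degree-one in $K_1$ (resp.\ $K_2$): if $|V(K_1)\cap V_j|\le 3$ these vertices could all be degree-two. Second, even when $w'$ is degree-one with edge $\{w',p_1,p_2\}$, the substitute set $N(p_1,p_2)$ has size only about $bn$, and with $b\approx 1/3$ it can sit entirely inside a single part; the same is true of the substitute set for $w$, with no reason the two parts agree. Third, your suggested fall-back of substituting a degree-two vertex requires the intersection of two codegree neighbourhoods, which can be empty when $b<1/2$ — precisely the regime of interest here. Rather than patching these with pigeonhole arguments, the clean fix (and the paper's route) is to drop substitutes entirely and lean on the index-vector match you already noticed, which converts the problem into five independent within-part reachability questions that closedness answers for free.
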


\begin{proof}
Without loss of generality, assume $i=1$ and $j=2$.
It suffices to show that every $x\in V_1$ and $y\in V_2$ are $(\beta'', 7c+1)$-reachable for some $\beta''>0$.
Indeed, since both $V_1$ and $V_2$ are $(\beta, c)$-closed in $H$. 
By Corollary~\ref{coro:21}, there exists $\beta'''$ such that they are $(\beta''', 7c+1)$-closed in $H$.
Then $V_1\cup V_2$ is $(\beta', 7c+1)$-closed in $H$ by letting $\beta' = \min \{\beta'', \beta'''\}$.

First, we pick a copy $F_1$ of $C_6$ with index vector $(b_1,\dots, b_r)$ and a copy $F_2$ of $C_6$ of index vector $(b_1+1, b_2-1, \dots, b_r)$ such that $F_1$ and $F_2$ are vertex disjoint and do not contain $x$ or $y$.
By the assumption, there are at least $\mu n^6 - 8 n^5 \ge \mu n^6/2$ choices for each of $F_1$ and $F_2$.
Let $x'\in V(F_2)\cap V_1$ and $y'\in V(F_1)\cap V_2$.
We name the other vertices as $V(F_1)\setminus \{y'\} = \{v_1,\dots, v_5\}$ and $V(F_2)\setminus \{x'\} = \{u_1,\dots, u_5\}$ such that for all $i\in [5]$, $u_i$ and $v_i$ belong to the same part of $\cP$, and thus they are $(\beta, c)$-reachable.
Next, we pick reachable $(6c-1)$-sets $S_x$ for $x$ and $x'$, $S_y$ for $y$ and $y'$, and for $i\in [5]$, we pick reachable $(6c-1)$-sets $S_i$ for $u_i$ and $v_i$ such that all these $(6c-1)$-sets are vertex disjoint and they contain no vertex in $\{x, y\}\cup V(F_1)\cup V(F_2)$.
Note that $S = V(F_1)\cup V(F_2)\cup (S_1\cup \cdots \cup S_5)\cup S_x\cup S_y$ is a reachable $(42c+5)$-set for $x$ and $y$. 
Indeed, $H[S\cup \{x\}]$ has a $C_6$-factor because by definition, $F_2$ is a copy of $C_6$ and, all of $H[S_x\cup \{x\}]$, $H[S_y\cup \{y'\}]$ and $H[S_i\cup \{v_i\}]$, $i\in [5]$ have $C_6$-factors.
Also, $H[S\cup \{y\}]$ has a $C_6$-factor because by definition, $F_1$ is a copy of $C_6$ and, all of $H[S_x\cup \{x'\}]$, $H[S_y\cup \{y\}]$ and $H[S_i\cup \{u_i\}]$, $i\in [5]$ have $C_6$-factors.

Note that for each of $S_1, \dots, S_5, S_x, S_y$, there are at least $\beta n^{6c-1} - (42c+5) n^{6c-2} \ge \beta n^{6c-1}/2$ choices for it.
In total, there are at least
\[
\frac1{(42c+5)!} \left( \frac{\mu n^6}2 \right)^2 \left( \frac{\beta n^{6c-1}}2 \right)^7 = \beta'' n^{42c+5},
\]
choices for $S$, where $\beta'' = \frac1{512(42c+5)!}\mu^2 \beta^7$.
So $x$ and $y$ are $(\beta'', 7c+1)$-reachable.
\end{proof}

Our next lemma is one of the key steps in proving Lemma~\ref{lem:absorb}.
Its proof is somehow long and we postpone it to the end of this section.

\begin{lemma}\label{lem:even}
Let $r=2,3$.
Suppose 
\[
0<1/n\ll \mu\ll \r \ll 1
\]
and let $H$ be an $n$-vertex 3-graph with $\delta_2(H) \ge (1/3-\gamma){n}$. Moreover, let $\cP=\{V_1, \dots, V_r\}$ be a partition of $V(H)$ with $|V_i| \ge n/3-2\gamma n$ for $i\in [r]$. 
Then one of the following holds.
\begin{itemize}
\item[(i)] There exist a 6-vector $\bfv$ and distinct $i,j\in [r]$ such that $\bfv, \bfv+\bfu_i-\bfu_j \in I_{\cP,C}^{\mu}(H)$.
\item[(ii)] 
All 6-vectors with all coordinates even are in $I_{\cP,C}^{\mu}(H)$. 
Moreover, if $r=2$, then $(1,2), (2,1)\in I_{\cP}^{\mu}(H)$.
\end{itemize}
\end{lemma}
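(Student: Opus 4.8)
The plan is to assume that (i) fails --- that is, there is no robust $C_6$-vector $\bfv$ and distinct $i,j$ with both $\bfv$ and $\bfv+\bfu_i-\bfu_j$ robust --- and to derive (ii). Write $c$ for a small constant with $\mu\ll c\ll\gamma$. First, from $\sum_i|V_i|=n$ and $|V_i|\ge n/3-2\gamma n$ one gets $|V_i|=n/3+O(\gamma n)$ when $r=3$, and $|V_i|\in[(1/3-2\gamma)n,(2/3+2\gamma)n]$ when $r=2$; in particular every part has $\Omega(n)$ vertices. For an ordered pair $(x,y)$, since $|N(xy)|\ge(1/3-\gamma)n$ and $r\le 3$, some part contains $>n/10$ common neighbours of $x,y$; fix one and say $(x,y)$ \emph{points to} it. Call a triple $(a,a',b)\in[r]^3$ \emph{heavy} if at least $cn^2$ pairs $(x,y)\in V_a\times V_{a'}$ point to $b$. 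Then every unordered pair $\{a,a'\}$ from $[r]$ admits at least one heavy $(a,a',b)$ (because $|V_a|=\Omega(n)$), and a heavy $(a,a',b)$ witnesses that $\bfu_a+\bfu_{a'}+\bfu_b$ is a robust edge-vector.

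Next I would record the synthesis principle for $C_6$-vectors. A copy of $C_6$ is given by vertices $1,\dots,6$ with $\{1,2,3\},\{3,4,5\},\{5,6,1\}\in E(H)$, and summing edge-indicators yields $\mathbf{i}_\cP(\{1,\dots,6\})=\mathbf{i}_\cP(e_1)+\mathbf{i}_\cP(e_2)+\mathbf{i}_\cP(e_3)-\mathbf{i}_\cP(\{1,3,5\})$, so the $C_6$-vector is the multiset $\{a_1,a_3,a_5\}\uplus\{b_1,b_2,b_3\}$, where $a_t$ is the part of vertex $t$ and $b_1,b_2,b_3$ those of $2,4,6$. If $(a_1,a_3,b_1)$, $(a_3,a_5,b_2)$, $(a_1,a_5,b_3)$ are heavy \emph{and} the underlying ``points to'' relations are dense enough, then choosing vertex $3$, then $1$ and $5$, then $2,4,6$ --- each with $\Omega(n)$ options --- shows that the $6$-vector $\bfu_{a_1}+\bfu_{a_3}+\bfu_{a_5}+\bfu_{b_1}+\bfu_{b_2}+\bfu_{b_3}$ is a robust $C_6$-vector.

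With these in hand the argument becomes a finite case analysis on which triples are heavy, for each pair-type $\{a,a'\}$. If a pair-type has ``too rich'' a set of heavy targets, one synthesises two $C_6$-vectors differing by a single transferral --- e.g.\ if distinct $b,b'$ are both heavy targets of $\{a,a\}$, the vectors $\{a,a,a,b,b',b'\}$ and $\{a,a,a,b,b,b'\}$ differ by $\bfu_{b'}-\bfu_b$ --- so (i) holds, a contradiction. Ruling out all such rich configurations forces the heavy structure into a short list of \emph{rigid} types (morally: one part serves as a ``cover'' and the rest carry an even parity, so $H$ looks like a blow-up of a fixed small $3$-graph). For each rigid type I would verify that either a transferral still appears (contradiction) or every even $6$-vector is realised as a robust $C_6$-vector via the synthesis above; and when $r=2$ I would additionally read off from the rigid structure that $(1,2),(2,1)\in I_\cP^\mu(H)$. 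Finally, every $6$-vector with an odd coordinate has an even number ($\ge 2$) of odd coordinates $i,j$ and hence is one transferral $\bfu_i-\bfu_j$ away from an all-even $6$-vector; so once all even $6$-vectors are known to be robust, a robust $C_6$-vector with an odd coordinate would again yield (i). Thus if (i) fails the robust $C_6$-vectors are precisely the all-even ones, which gives (ii).

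The step I expect to be the main obstacle is making the synthesis of the second paragraph rigorous and then pushing the case analysis through. The naive ``chaining'' need not give $\Omega(n^6)$ copies, since forcing vertex $3$ to lie in dense pair-relations with both $V_{a_1}$ and $V_{a_5}$ is a triangle count across three bipartite graphs, which can be empty even when each graph has positive density. The resolution is that in the rigid types the relevant ``points to'' relations are in fact almost complete --- a consequence of $|N(xy)|\ge(1/3-\gamma)n$, the near-equality $\sum|V_i|=n$, and the structural restriction --- so ordinary supersaturation suffices; carrying this out, alongside the bookkeeping of the $r=2$ case (loosely balanced parts, plus the extra edge-vector conclusion), is where essentially all of the length of the proof goes.
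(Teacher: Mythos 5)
The proposal is oriented correctly — assume (i) fails, track where codegrees concentrate, and try to derive (ii) — but as written it does not constitute a proof, and you have flagged the critical gap yourself. The ``synthesis principle'' is not sound: knowing that $(a_1,a_3,b_1)$, $(a_3,a_5,b_2)$, $(a_1,a_5,b_3)$ are all heavy does not let you choose a vertex in $V_{a_3}$ that participates simultaneously in many ``points-to-$b_1$'' pairs with $V_{a_1}$ and many ``points-to-$b_2$'' pairs with $V_{a_5}$; as you note, this is a triangle count across three bipartite relations that can vanish even when each relation is dense. Your proposed fix --- that in the ``rigid types'' the points-to relations become almost complete --- is exactly what would need to be proved, and the proposal never exhibits the rigid types, never carries out the case analysis, and never actually derives the $r=2$ ``moreover'' clause. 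At the moment the entire load-bearing step is a placeholder.

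The paper's proof avoids this obstacle by working from the complementary statement. It first observes (via Proposition~\ref{supersaturation}) that if (ii) fails, some $3$-vector $\bfv=\bfu_i+\bfu_j+\bfu_k$ satisfies $\bfv\notin I_\cP^\eta(H)$, and then exploits the \emph{absence} of edges with that index vector: almost every pair $S$ of index $\bfu_i+\bfu_j$ is ``good,'' meaning $\deg(S,V_k)<\gamma n$, and therefore $\deg(S,V\setminus V_k)\ge(1/3-2\gamma)n$. Because ``goodness'' is a property of a pair measured against a single fixed forbidden part, Fact~\ref{fact:triangle} immediately produces $\Omega(n^3)$ triples all of whose pairs are good, and then one picks an extra neighbor per pair freely (with $\Omega(n)$ choices each); this yields $\Omega(n^6)$ copies of $C_6$ with the needed index vectors and there is no three-way intersection of dense graphs to control. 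The case analysis is then explicit and short (five subcases, one for each $3$-vector type up to symmetry), each either producing (i) directly or producing the requisite robust even $6$-vector (the $(2,1)$ subcase also supplying the ``moreover'' statement via Fact~\ref{fact:lattco}). Re-expressing your ``heavy'' structure in this complementary form would likely collapse your plan onto the paper's argument; without that, the supersaturation step you already identified as the obstacle remains unjustified.
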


The following lemma extends \cite[Proposition 8.2]{My14} -- it works under a slightly lower codegree and a slightly more unbalanced bipartition.
The proof is similar to the one of \cite[Proposition 8.2]{My14}, except that we use Lemma~\ref{lem:even}.

\begin{lemma}\label{lem:oddcopy}
Given $0< \gamma \ll 1$, the following holds for sufficiently large $n$. Let $H=(V, E)$ be an n-vertex 3-graph with $\delta_2(H) \ge (\frac{1}{3}-\gamma){n}$. Suppose $A\cup B$ is a bipartition of $V$ such that $|A|, |B|\ge n/3 - 2\r n$, then there is a copy of $C_6$ that intersects $A$ at an odd number of vertices.
\end{lemma}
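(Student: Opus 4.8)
The plan is to feed the partition $\cP=\{A,B\}$ (so $r=2$, $V_1=A$, $V_2=B$) into Lemma~\ref{lem:even}, having first fixed a sufficiently small constant $\mu$ with the hierarchy $1/n\ll\mu\ll\gamma$, which is legitimate since $\gamma$ is a fixed constant and $n$ is taken large at the end. With respect to $\cP$, the index vector of a copy $K$ of $C_6$ is the pair $(|V(K)\cap A|,\,6-|V(K)\cap A|)$, so what we want is exactly that some copy of $C_6$ has index vector with odd first coordinate. Lemma~\ref{lem:even} gives one of two alternatives, which I treat separately.

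In alternative (i) there are a $6$-vector $\bfv=(v_1,v_2)$ and distinct $i,j\in\{1,2\}$ with $\bfv,\bfv+\bfu_i-\bfu_j\in I_{\cP,C}^{\mu}(H)$. Since $r=2$, the transferral $\bfu_i-\bfu_j$ is $(1,-1)$ or $(-1,1)$, so the first coordinates of $\bfv$ and of $\bfv+\bfu_i-\bfu_j$ differ by $1$ and hence have opposite parities; pick whichever of the two $6$-vectors, say $\bfv^\ast=(v^\ast_1,v^\ast_2)$, has $v^\ast_1$ odd. As $\bfv^\ast$ is a $\mu$-robust $C_6$-vector there are at least $\mu n^6\ge 1$ copies of $C_6$ with index vector $\bfv^\ast$, and any of them meets $A$ in $v^\ast_1$ (odd) vertices. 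Thus alternative (i) is a one-line deduction.

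Alternative (ii) is the real work: now all even $6$-vectors $(0,6),(2,4),(4,2),(6,0)$ lie in $I_{\cP,C}^{\mu}(H)$, and additionally $(1,2),(2,1)\in I_{\cP}^{\mu}(H)$. Here I aim to exhibit a single copy of $C_6$ with index $(3,3)$. The convenient shape to build is the one whose three degree-$2$ vertices $x_1,x_2,x_3$ all lie in $A$ and whose three degree-$1$ vertices all lie in $B$: then every edge of the copy has exactly two vertices in $A$, i.e.\ index $(2,1)$, so it suffices to find $x_1,x_2,x_3\in A$ that pairwise have a common neighbour in $B$, using three distinct such neighbours (which costs only the avoidance of $O(1)$ vertices and so is free whenever the relevant codegrees into $B$ are superconstant). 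Equivalently, letting $G$ be the graph on $A$ with $uv\in E(G)$ iff $\deg_H(uv,B)\ge 1$, it suffices to find a triangle in $G$. The robust edge-vector $(2,1)$ gives $\sum_{uv\in\binom{A}{2}}\deg_H(uv,B)\ge \mu n^3$ and hence $e(G)\ge\mu n^2$, but that alone does not force a triangle, since $G$ could be essentially bipartite --- this is the obstacle, and breaking it is where the effort goes. To do so I would bring in the remaining robust vectors: a robust $(6,0)$-copy is a copy of $C_6$ entirely inside $A$ whose three degree-$2$ vertices pairwise have a common neighbour in $A$, so the auxiliary graph $G'$ on $A$ defined by $\deg_H(\cdot,A)\ge 1$ contains $\Omega(\mu)\,n^3$ triangles. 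If one such ``$A$-triangle'' also happens to have all three of its pairs owning a neighbour in $B$, I simply swap the three degree-$1$ vertices of the corresponding $(6,0)$-copy out into $B$ and obtain the desired $(3,3)$-copy; otherwise every robust $(6,0)$-copy has a degree-$2$ pair $uv$ with $\deg_H(uv,B)=0$, i.e.\ $\deg_H(uv,A)\ge(1/3-\gamma)n-O(1)$, and I would contradict this by double counting the $\Omega(\mu)\,n^3$ $A$-triangles against the abundance of $(2,1)$- and $(1,2)$-edges (and, if needed, the $(0,6)$-copies and the codegree bound inside $B$). This last step runs parallel to \cite[Proposition~8.2]{My14}, the role of Lemma~\ref{lem:even} being precisely to supply the robust vectors that let the argument survive the weaker hypothesis $\delta_2(H)\ge(1/3-\gamma)n$ and the mildly unbalanced split $|A|,|B|\ge n/3-2\gamma n$; the bookkeeping there --- choosing the right auxiliary graphs and making the counts close --- is essentially all the difficulty, while everything preceding it is a short consequence of Lemma~\ref{lem:even}.
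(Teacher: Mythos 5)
Your treatment of alternative (i) is fine: for $r=2$ the two vectors $\bfv$ and $\bfv+\bfu_i-\bfu_j$ have first coordinates of opposite parity, so one of them is a $\mu$-robust $C_6$-vector with odd intersection with $A$, which immediately supplies the required copy. However, alternative (ii) --- which you correctly flag as ``the real work'' --- is where your proposal stops being a proof, and the gap there is not merely ``bookkeeping.''

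You fixate on producing a $(3,3)$-copy whose three degree-two vertices lie in $A$, and you reduce this to finding a triangle in the auxiliary graph $G$ on $A$ given by $\deg_H(uv,B)\ge 1$. You then correctly observe that $e(G)\ge\mu n^2$ does not force a triangle because $G$ could be bipartite, and you propose to break this with robust $(6,0)$-copies and an unspecified double count ``parallel to'' \cite[Proposition~8.2]{My14}. This does not close the gap: a $(6,0)$-copy lives entirely in $A$ and its degree-two pairs belong to the $A$-side auxiliary graph, so it tells you nothing about whether $G$ is bipartite; and the dichotomy you set up (``either some $A$-triangle has all three pairs with a $B$-neighbour, or every robust $(6,0)$-copy has a pair $uv$ with $\deg(uv,B)=0$'') leads nowhere, since in the second case you simply learn that many pairs in $A$ send all their codegree into $A\cup\{\text{at most two vertices of }B\}$ without any mechanism to contradict this. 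The crucial ingredient you are missing is that the paper argues \emph{by contradiction}, assuming there is \emph{no} odd copy at all; this global assumption forbids not only red--red--red triangles (which would give a $(3,3)$-copy) but also blue--blue--red triangles (which would give a $(5,1)$-copy), where red means $\deg(uv,B)\ge 3$ and blue means $\deg(uv,A)\ge 6$. Using \emph{both} forbidden patterns one shows the red graph on $A$ is complete bipartite between two classes $A_1,A_2$ of size $\ge\mu n/2$, with $A_1,A_2$ internally all-blue; then, taking $|A_1|\le|A_2|$ and any pair $y,z\in A_1$, the codegree bound forces $\deg(yz,A_2)\ge\mu n$ (since $\deg(yz,A_1)\le n/4$ and $\deg(yz,B)\le 2$), and averaging over pairs $y\in A_1,\,z\in A_2$ produces a cross pair that is simultaneously red and blue --- contradiction. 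Your plan never uses the absence of $(5,1)$-copies, so it cannot derive this bipartite structure and has no route to a contradiction; by restricting attention to $(3,3)$-copies alone you have discarded exactly the leverage the argument needs. The ``superconstant codegree'' aside is also slightly off: the paper uses the threshold $3$ (not $\ge 1$) in the definition of red precisely so that three distinct $B$-vertices can be chosen for the three pairs, which your version of $G$ does not guarantee.
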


\begin{proof}
Let $0<1/n \ll \mu \ll \r$.
Suppose for a contradiction that no such copy of $C_6$ exists.
Without loss of generality, assume that $|A| \le n/2$.
Note that $(2,1)\in I_{\cP}^\mu(H)$ by Lemma~\ref{lem:even} with $r=2$.
Indeed, otherwise, Lemma~\ref{lem:even}(i) holds and exactly one of the two robust $C_6$-vectors has odd coordinates, implying the existence of a desired copy of the lemma, a contradiction.

Color the edges of the complete graph $K[A]$ as follows.
In fact, we color $xy$ red if there are at least 3 vertices $w\in B$ with $\{x, y, w\}\in E$, and we color $xy$ blue if there are at least 6 vertices $w\in A$ such that $\{x, y, w\}\in E$.
So every edge $xy$ receives at least one color.
Since any pair $xy$ lies in at most $n$ edges, we find that there are at least $(\mu n^3 - 2n^2)/n \ge \mu n^2/2$ red edges of $K[A]$.

Observe that no triangle in $K[A]$ has three red edges.
Indeed, if $xyz$ is such a triangle then we may choose distinct $w_1, w_2, w_3\in B$ such that $\{x, y, w_1\}, \{x, z, w_2\}, \{y, z, w_3\}$ are each edges of $H$, thus forming a copy of $C_6$ with index vector $(3,3)$.
Similarly, no triangle in $K[A]$ has two blue edges and one red edge, as then we can find a copy of $C_6$ with index vector $(5,1)$.
Now, choose any vertex $x\in A$ which lies in a red edge, and define $A_1=\{y\in A\setminus \{x\}: xy \text{ is red}\}$ and $A_2: = A\setminus A_1$.
So $A_1$ and $A_2$ partition $A$, and by our previous observations no edge of $K[A_1]$ or $K[A_2]$ is red.
So all edges of $K[A_1]$ and $K[A_2]$ are blue and not red; it follows that every edge $yz$ with $y\in A_1$ and $z\in A_2$ is red and not blue (so in fact every edge of $K[A]$ has only one color).
Moreover, the red edges of $K[A]$ form a complete bipartite subgraph of $K[A]$ with vertex classes $A_1$ and $A_2$.
Since the number of red edges of $K[A]$ is at least $\mu n^2/2$ it follows that $|A_1|, |A_2| \ge \mu n/2$.
Without loss of generality we may assume that $|A_1|\le |A_2|$, so $|A_1|\le n/4$.

Let $y,z\in A_1$. There are at least $\delta_2(H)\ge (\frac{1}{3}-\gamma){n}$ vertices $w$ such that $\{w, y, z\}\in E$. At most $n/4$ of these vertices $w$ lie in $A_1$, and since $yz$ is not red at most 2 of these vertices $w$ lie in $B$.
So there are at least $\mu n$ vertices $w\in A_2$ such that $\{w, y, z\}\in E$; summing over all pairs $y, z\in A_1$ we find that there are at least $\binom{|A_1|}2 \mu n \ge \mu^3 n^3/9$ edges of $H$ with two vertices in $A_1$ and one vertex in $A_2$.
Since there are $|A_1| |A_2|\le n^2$ pairs $yz$ with $y\in A_1$ and $z\in A_2$, we deduce that some such pair $yz$ lies in at least $\mu^3 n /9 \ge 6$ such edges of $H$.
But then $yz$ is blue, a contradiction.
\end{proof}

\subsection{Proof of Lemma~\ref{lem:absorb}}

We call an $m$-set $A$ an \emph{absorbing $m$-set} for a $6$-set $S$ if $A\cap S=\emptyset$ and both $H[A]$ and $H[A\cup S]$ contain $C_6$-factors. 
Denote by $\A^m(S)$ the set of all absorbing $m$-sets for $S$.
Now we are ready to prove Lemma \ref{lem:absorb}.

\begin{proof}[Proof of Lemma~\ref{lem:absorb}.]
Suppose $0<1/n \ll \{\beta, \mu\} \ll \r, 1/t$.
Suppose $H$ is an $n$-vertex 3-graph with $\delta_2(H) \ge (\frac{1}{3}-\gamma){n}$. 
Applying Lemma~\ref{lem:partition} on $H$ gives a partition $\cP'$ of $V(H)$ into $V_1', \dots, V_{r'}'$ with $r' \le 3$ such that for any $i \in [r']$, $|V_i'| \ge (\frac13 - 2\r) n$ and $V_i'$ is $(\beta', 4)$-closed in $H$ for some $\beta \ll \beta' \ll \r$.
By Lemma~\ref{lem:trans}, we combine the parts $V_i', V_j'$ if there exist 6-vector $\bfv$ and distinct $i,j\in [r']$ such that $\bfv, \bfv+ \bfu_i - \bfu_j \in I_{\cP', C}^{\mu}(H)$.
We greedily combine the parts (at most twice) until there is no such $\mu$-robust 6-vectors $\bfv$ and $\bfv+\bfu_i - \bfu_j$.
Let $\cP=\{V_1,\dots, V_r\}$ be the resulting partition with $r\le 3$. 
By Corollary~\ref{coro:21}, we may assume that for any $i \in [r]$, $V_i$ is $(\beta, t)$-closed in $H$ for some $t\le 204$.
Moreover, by Lemma~\ref{lem:even}, we may assume that all 6-vectors with all coordinates even are in $I_{\cP, C}^\mu(H)$.

Let $\F_0 = \emptyset$ if $r=1$.
If $r=2$, then we apply Lemma~\ref{lem:oddcopy} on $\{V_1, V_2\}$ and get a copy $F_0$ of $C_6$ that intersects both parts of $\cP$ at an odd number of vertices.
Let $\F_0 = \{F_0\}$.
If $r=3$, then we apply Lemma~\ref{lem:oddcopy} on $\{V_1, V_2\cup V_3\}$ and get a copy $F_1$ of $C_6$ that intersects $V_1$ and $V_i$ at an odd number of vertices, where $\{i, j\}=\{2,3\}$.
Then we apply Lemma~\ref{lem:oddcopy} on $\{V_j\setminus V(F_1), (V_1\cup V_i)\setminus V(F_1)\}$ and get a copy $F_2$ of $C_6$ that intersects $V_j$ and one of $V_1$ and $V_i$ at an odd number of vertices. 
So $\bfi_{\cP}(F_1) \pmod 2$ and $\bfi_{\cP}(F_2) \pmod 2$ are two distinct vectors from $(1,1,0)$, $(0,1,1)$ and $(1,0,1)$.
Let $\F_0=\{F_1, F_2\}$.

Let $m=36t$, $\r_1= \mu \beta^{6}/128$ and $\a = \r_1^2$.

\begin{claim}\label{clm:abs}
Any $6$-set $S$ with all coordinates even satisfies that $|\A^m(S)|\ge \r_1 n^{m}$.
\end{claim}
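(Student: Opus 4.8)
The plan is to build an absorbing $m$-set for a given $6$-set $S$ with even index vector by a two-stage procedure: first split $S$ into six singletons and route each singleton through a short ``reachable'' gadget into a vertex of an auxiliary $C_6$-configuration, then patch together the leftovers with a $C_6$ supported on a robust $C_6$-vector. Concretely, write $S=\{s_1,\dots,s_6\}$ and let $\bfv=\bfi_\cP(S)$, which by hypothesis has all coordinates even. Since each $V_i$ is $(\beta,t)$-closed, for any target vertex $w_\ell$ lying in the same part as $s_\ell$ there are at least $\beta n^{6t-1}$ reachable $(6t-1)$-sets for $s_\ell$ and $w_\ell$; the idea is to choose the $w_\ell$ as the vertices of two (or appropriately many) auxiliary copies of $C_6$ whose combined index vector, together with $\bfv$, still lies in $I^{\mu}_{\cP,C}(H)$. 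Because $\bfv$ is even and all even $6$-vectors are $\mu$-robust $C_6$-vectors (this is exactly the conclusion we arranged using Lemma~\ref{lem:even}), we can pick a $\mu$-robust $C_6$-vector $\bfw$ so that $\bfv+\bfw$ is again an even $6$-vector (hence $\mu$-robust), which is what makes the absorption close up.

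The key steps, in order, are: (1) Fix $\bfv=\bfi_\cP(S)$; choose a copy $F$ of $C_6$ with $\bfi_\cP(V(F))=\bfv$ avoiding $S$ — there are at least $\mu n^6 - 6n^5 \ge \mu n^6/2$ such copies — and likewise choose a second copy $F'$ of $C_6$ with $\bfi_\cP(V(F'))=\bfv$, vertex-disjoint from $F$ and $S$. (2) Label $V(F)=\{v_1,\dots,v_6\}$ and $V(F')=\{v_1',\dots,v_6'\}$ so that $s_\ell, v_\ell, v_\ell'$ all lie in the same part of $\cP$ (possible since $F,F'$ have the same index vector as $S$). (3) For each $\ell\in[6]$, greedily pick pairwise-disjoint reachable $(6t-1)$-sets: $R_\ell$ for the pair $\{s_\ell,v_\ell\}$ and $R_\ell'$ for $\{v_\ell,v_\ell'\}$, all avoiding $S\cup V(F)\cup V(F')$ and the previously chosen sets; at each selection there remain at least $\beta n^{6t-1} - m\cdot n^{6t-2} \ge \beta n^{6t-1}/2$ valid choices. (4) Set $A := V(F') \cup \bigcup_{\ell=1}^{6} (R_\ell\cup R_\ell')$, which has size $6 + 12(6t-1) = 72t-6$; note $|A| = m - ?$, so adjust $m$ or pad $A$ by absorbing a dummy copy of $C_6$ to hit $m=36t$ exactly (a clean way is to fold one extra reachable-set layer in, or simply redefine the count so that $m$ matches — the exact bookkeeping is routine). (5) Verify $H[A]$ has a $C_6$-factor: use $F'$ together with the $C_6$-factors of $H[R_\ell'\cup\{v_\ell\}]$ and of $H[R_\ell\cup\{v_\ell\}]$ — wait, each $v_\ell$ can only be used once, so instead pair $R_\ell$ with $v_\ell'$; i.e. $H[A]=$ ($C_6$ on $V(F')$) $\cup$ ($C_6$-factors of $H[R_\ell\cup\{v_\ell'\}]$ — no: use the chain $s_\ell \leftrightarrow v_\ell$ only when $s_\ell$ present). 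The correct accounting: $H[A]$ has a $C_6$-factor via $F'$ (covers $v_\ell'$) plus, for the pair $\{v_\ell,v_\ell'\}$, we actually only need one reachable set $R_\ell'$ and it absorbs $v_\ell$; and $R_\ell$ stays ``spare'' and must itself span a union of $C_6$'s, which it does not in general — so in fact the standard trick is: $R_\ell$ is chosen so that \emph{both} $H[R_\ell\cup\{s_\ell\}]$ and $H[R_\ell\cup\{v_\ell\}]$ have $C_6$-factors (this is exactly ``reachable set''), and $H[A]$ without $S$ uses the $v_\ell$ side while $H[A\cup S]$ uses the $s_\ell$ side; the auxiliary $F$ is there to supply the $v_\ell$'s, and a second robust copy $F'$ supplies a $C_6$-factor-friendly remainder. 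So more cleanly: $A = V(F)\cup\bigcup_\ell R_\ell$ with $R_\ell$ reachable for $\{s_\ell,v_\ell\}$; then $H[A]$ has the $C_6$-factor $\{F\}\cup\{C_6\text{-factor of }H[R_\ell\cup\{v_\ell\}]\}_\ell$, and $H[A\cup S]$ has the $C_6$-factor $\{C_6\text{-factor of }H[R_\ell\cup\{s_\ell\}]\}_\ell$ together with a $C_6$-factor on $V(F)$ — but $V(F)$ now has no companion, so we need $\bfi_\cP(V(F))=\bfi_\cP(S)=\bfv$ to be a $\mu$-robust $C_6$-vector \emph{and} $H[V(F)]$ to be a single $C_6$, which it is by construction. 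Thus $|A|=6+6(6t-1)=36t-0$; taking $m=36t$ and padding by one more trivial reachable layer if needed gives the stated value.

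(6) Count: the number of ordered choices of $(F, R_1,\dots,R_6)$ is at least $\tfrac{\mu n^6}{2}\cdot\bigl(\tfrac{\beta n^{6t-1}}{2}\bigr)^6 = \tfrac{\mu\beta^6}{128}\, n^{6+6(6t-1)} = \r_1 n^{m}$, and dividing by the number of orderings (a constant depending only on $m$) absorbs into the constant; hence $|\A^m(S)|\ge \r_1 n^m$ after readjusting $\r_1$, matching the claimed bound $\r_1 = \mu\beta^6/128$ up to the harmless factorial which one can push into the choice $1/n\ll\mu,\beta$.

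The main obstacle I anticipate is the correct combinatorial bookkeeping in Step (5) — making sure that the same vertices of the auxiliary copy $F$ are consistently used on the ``$v_\ell$ side'' and genuinely freed on the ``$s_\ell$ side'', and that every vertex of $A$ (respectively $A\cup S$) is covered exactly once by the resulting family of vertex-disjoint $C_6$'s. The parity/lattice hypothesis ($\bfv$ even $\Rightarrow$ $\bfv\in I^\mu_{\cP,C}(H)$, arranged via Lemmas~\ref{lem:even} and \ref{lem:trans}) is precisely what guarantees the auxiliary copy $F$ exists in abundance; without it the absorption cannot close, so flagging that dependency explicitly is important. The numerical estimates (choices remaining at each greedy step, the final product) are routine once the structure is pinned down.
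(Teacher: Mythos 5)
Your final construction, $A = V(F)\cup\bigcup_{\ell}R_\ell$ with $R_\ell$ a reachable $(6t-1)$-set for $\{s_\ell,v_\ell\}$, is precisely the paper's construction, and your count in Step (6) matches the paper's. However, the write-up has two substantive problems, one of which is a genuine error in the verification.

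The error: you describe the $C_6$-factor of $H[A]$ as ``$\{F\}\cup\{C_6\text{-factor of }H[R_\ell\cup\{v_\ell\}]\}_\ell$.'' This double-covers every $v_\ell$, since $v_\ell\in V(F)$ and $v_\ell\in R_\ell\cup\{v_\ell\}$; the vertex count is $6+6\cdot 6t = 36t+6 > 36t = |A|$. The correct factorization of $H[A]$ omits $F$ as a standalone copy entirely: the six blocks $R_\ell\cup\{v_\ell\}$, each of size $6t$ and each spanning $t$ disjoint copies of $C_6$ by reachability, already partition $A$ because the $v_\ell$ enumerate $V(F)$. The copy $F$ is used only in the factorization of $H[A\cup S]$, where it pairs with the six blocks $R_\ell\cup\{s_\ell\}$; there your description is right. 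This asymmetry (no standalone $F$ in $H[A]$, standalone $F$ in $H[A\cup S]$) is the whole point of the absorbing gadget, and it needs to be stated correctly.

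Secondly, your earlier detour through two auxiliary copies $F,F'$ and twelve reachable sets $R_\ell, R_\ell'$ produces a set of size $6+12(6t-1)=72t-6$, which is not $m=36t$, and the remark that the ``bookkeeping is routine'' does not rescue it — you would have to redefine $m$ or the counting structure in Claim~\ref{clm:abs}, \eqref{expected}–\eqref{eq:AS}, and the absorption step, so this is not a harmless deferral. Fortunately you abandon that variant and land on the one-copy construction, where $|A|=6+6(6t-1)=36t=m$ exactly, so no ``padding'' is needed at all (contrary to your parenthetical). Once the $H[A]$ factorization is corrected and the two-copy detour is deleted, the argument is correct and is the same as the paper's.
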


\begin{proof}
For a $6$-set $S=\{y_1,\dots, y_6\}$ with all coordinates even, we construct absorbing $m$-sets for $S$ as follows. 
We first fix a copy $F$ of $C_6$ on $\{x_1, \dots, x_6\}$ in $H$ such that $\bfi_{\cP}(F)=\bfi_{\cP}(S)$ and $F\cap S=\emptyset$, for which we have at least $\mu n^6 - 6 n^{5} >{\mu} n^6/2$ choices. 
Without loss of generality, we may assume that for all $i\in [6]$, $x_i, y_i$ are in the same part of $\cP$. Since $x_i$ and $y_i$ are $(\beta, t)$-reachable, there are at least $\beta n^{6t-1}$ $(6t-1)$-sets $T_i$ such that both $H[T_i\cup \{x_i\}]$ and $H[T_i\cup \{y_i\}]$ have $C_6$-factors. We pick disjoint reachable $(6t-1)$-sets for each $x_i, y_i$, $i\in [6]$ greedily, while avoiding the existing vertices. Since the number of existing vertices is at most $m$, there are at least ${\beta} n^{6t-1}/2$ choices for each such $(6t-1)$-set.
Note that each $F\cup T_1\cup \cdots \cup T_{6}$ is an absorbing set for $S$. Indeed, first, it contains a $C_6$-factor because each $T_i\cup \{x_i\}$ for $i\in [6]$ spans $t$ disjoint copies of $C_6$. Second, $H[F\cup T_1\cup \cdots \cup T_{6}\cup S]$ also contains a $C_6$-factor because $F$ is a copy of $C_6$ and each $T_i\cup \{y_i\}$ for $i\in [6]$ spans $t$ disjoint copies of $C_6$.  So we get at least $\r_1 n^{m}$ absorbing $m$-sets for $S$.
\end{proof}

Now we build a family $\F_1$ of $m$-sets by probabilistic arguments. Choose a family $\F$ of $m$-sets in $H$ by selecting each of the $\binom nm$ possible $m$-sets independently with probability $p= \r_1 n^{1-m}$. Then by Chernoff's bound, with probability $1-o(1)$ as $n \rightarrow \infty$, the family $\F$ satisfies the following properties:
\begin{align}
|\F|\leq 2p\binom nm\leq {\r_1 n}\, \text{ and }\, |\A^m(S)\cap \F|\geq \frac{p|\A^m(S)|}2\geq \frac{\r_1^{2} n}{2},  \label{expected}
\end{align}
for all 6-sets $S$ with all coordinates even.
Furthermore, the expected number of pairs of $m$-sets in $\F$ that are intersecting is at most
\begin{align*}
\binom nm\cdot m \cdot \binom n{m-1} \cdot p^2\leq \frac{\r_1^2n}{8}.
\end{align*}
Thus, by using Markov's inequality, we derive that with probability at least $1/2$,
\begin{align}
\F \text{ contains at most } \frac{\r_1 ^{2}n}{4} \text{ intersecting pairs of $m$-sets.}  \label{intersecting}
\end{align}
Hence, there exists a family $\F$ with the properties in \eqref{expected} and \eqref{intersecting}. By deleting one member of each intersecting pair, the $m$-sets intersecting $V(\F_0)$, and the $m$-sets that are not absorbing sets for any $6$-set $S\subseteq V$, we get a subfamily $\F_1$ consisting of pairwise disjoint $m$-sets.
Let $W=V(\F_1)\cup V(\F_0)$ and thus $|W|\le m|\F|+12 < m\r_1 n + 12 < \r n$.
Since every $m$-set in $\F_1$ is an absorbing $m$-set for some $6$-set $S$ and every element of $\F_0$ is a copy of $C_6$, $H[W]$ has a $C_6$-factor.
For any $6$-set $S$ with all coordinates even, by \eqref{expected} and \eqref{intersecting} above we have
\begin{align}
|\A^m(S)\cap \F_1|\geq \frac{\r_1^{2} n}{2}-\frac{\r_1^2 n}{4} -|V(\F_0)| \ge \frac{\r_1^{2} n}{4} - 12\label{eq:AS}.
\end{align}

Now fix any set $U\subseteq V\setminus W$ of size $|U|\leq \a n$ and $|U|\in 6\mathbb{Z}$.
We claim that there exists $\F'\subseteq \F_0$ such that $U\cup V(\F')$ can be partitioned into at most $\a n/6+2$ 6-sets with all coordinates even.
Indeed, first observe that a set $U'$ with $|U'|\in 6\mathbb{Z}$ can be partitioned into 6-sets with all coordinates even if and only if all coordinates of $\bfi_{\cP}(U')$ are even.
If $r=1$, then $\bfi_{\cP}(U) = (|U|)$ is even.
If $r=2$, then either $\bfi_{\cP}(U)$ or $\bfi_{\cP}(U\cup V(F_0))$ has all coordinates even.
Otherwise $r=3$. If not all coordinates of $\bfi_{\cP}(U)$ are even, then $\bfi_{\cP}(U) \pmod 2 \in \{(1,1,0), (1,0,1), (0,1,1)\}$.
Thus, exactly one of $\bfi_{\cP}(U\cup V(F_1))$, $\bfi_{\cP}(U\cup V(F_2))$ and $\bfi_{\cP}(U\cup V(F_1\cup F_2))$ have all coordinates even. 
So the claim holds.
Since each $6$-set has all coordinates even, by \eqref{eq:AS} and $\frac{\a n}6+2 \leq \frac{\r_1^2 n}{4} - 12$, they can be greedily absorbed by $m$-sets in $\F_1$. Hence, $H[U\cup W]$ contains a $C_6$-factor.
\end{proof}

\subsection{Proof of Lemma~\ref{lem:even}}

We first collect some useful simple facts on graphs. 

\begin{fact}\label{fact:triangle}
Fix $0<\r, \r' <1$. and let $G$ be a graph on $V$. 
\begin{enumerate}[$(i)$]
\item If $|E(G)| \ge (1-\r){|V|\choose 2}$, then the number of triangles in $G$ is at least $(1-3\r) {|V|\choose 3}$.
\item If $G=(V_1, V_2, V_3, E)$ is tripartite and we have $e(V_i, V_j) \ge (1-\r)|V_i| |V_j|$ for distinct $i,j\in [3]$, then the number of triangles in $G$ is at least $(1-3\r) |V_1| |V_2| |V_3|$.
\item Suppose $V=V_1\cup V_2$ for some $V_1\cap V_2=\emptyset$ and $|V_1|\ge \r'/\r$. If $e(V_1) \ge (1-\r)\binom{|V_1|}2$ and $e(V_1, V_2) \ge \r'|V_1| |V_2|$, then the number of triangles in $G$ with two vertices in $V_1$ and one vertex in $V_2$ is at least $(\r'^2 - 2\r) \binom{|V_1|}2 |V_2|$.
\end{enumerate}
\end{fact}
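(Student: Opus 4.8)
My plan is to treat the three parts by elementary counting, since each is a standard "graph with few non-edges has many triangles" statement. The unifying idea is to count, for each pair (or each potential triangle), the obstructions coming from missing edges, and to subtract.

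For part $(i)$, I would count triples $\{x,y,z\}\in\binom{V}{3}$ that fail to span a triangle. Such a triple has at least one of its three pairs in the complement $\overline G$. Since $|E(\overline G)|\le \r\binom{|V|}{2}$, each non-edge lies in at most $|V|-2$ triples, so the number of non-triangle triples is at most $\r\binom{|V|}{2}(|V|-2)=3\r\binom{|V|}{3}$; hence at least $(1-3\r)\binom{|V|}{3}$ triangles. For part $(ii)$, the same bookkeeping applies to the tripartite host $V_1\times V_2\times V_3$: a triple $(x,y,z)$ with $x\in V_1,y\in V_2,z\in V_3$ fails to be a triangle only if one of $xy,yz,xz$ is absent, and the number of absent pairs across each $(V_i,V_j)$ is at most $\r|V_i||V_j|$. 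Summing the three contributions, each missing pair (say in $V_i\times V_j$) extends to at most $|V_\ell|$ triples where $\ell$ is the third index, so the count of bad triples is at most $\r(|V_1||V_2||V_3|+|V_1||V_2||V_3|+|V_1||V_2||V_3|)=3\r|V_1||V_2||V_3|$, leaving at least $(1-3\r)|V_1||V_2||V_3|$ triangles.

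For part $(iii)$ the mechanism is the same but with a different parametrization. I would count ordered configurations $(\{x,y\},z)$ with $x,y\in V_1$ distinct and $z\in V_2$ such that $xyz$ is a triangle; I want this to be at least $(\r'^2-2\r)\binom{|V_1|}{2}|V_2|$. The candidate set has size $\binom{|V_1|}{2}|V_2|$. A configuration fails if $xy\notin E(G)$, or $xz\notin E(G)$, or $yz\notin E(G)$. The number of bad configurations of the first type is at most $\r\binom{|V_1|}{2}|V_2|$. For the second and third types I would use the hypothesis $e(V_1,V_2)\ge\r'|V_1||V_2|$: the number of pairs $(x,z)$ with $x\in V_1$, $z\in V_2$, $xz\notin E$ is at most $|V_1||V_2|-\r'|V_1||V_2|=(1-\r')|V_1||V_2|$, and each such pair extends to at most $|V_1|$ choices of $y$; that gives a crude bound of $(1-\r')|V_1|^2|V_2|$, which is too weak. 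So instead I would proceed slightly differently: first note $e(V_1,V_2)\ge \r'|V_1||V_2|$ forces at least $\r'|V_2|$ vertices $z\in V_2$ with $\deg(z,V_1)\ge (\r'/2)|V_1|$ (or a similar Markov-type split), and for each such $z$ the set $N(z)\cap V_1$ has at least $\r'|V_1|/2$ vertices; combined with $e(V_1)\ge(1-\r)\binom{|V_1|}{2}$, the pairs inside $N(z)\cap V_1$ that are edges of $G$ number at least $\binom{\r'|V_1|/2}{2}-\r\binom{|V_1|}{2}$, and each such edge $xy$ with $z$ forms a triangle. Summing over the $\ge\r'|V_2|$ good $z$'s and matching constants (using $|V_1|\ge\r'/\r$ so the $\binom{\cdot}{2}$ term dominates the error) should land at the stated bound after absorbing constants. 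I expect this last step — getting the exact constant $\r'^2-2\r$ rather than some messier expression — to be the only delicate point; I would most likely instead count directly the triples $xyz$ with $z\in V_2$ by writing the number of triangles as $\sum_{z\in V_2}\big(e(N(z)\cap V_1)\big)\ge \sum_{z\in V_2}\big(\binom{\deg(z,V_1)}{2}-\r\binom{|V_1|}{2}\big)$, apply convexity (Jensen) to $\sum_z\binom{\deg(z,V_1)}{2}\ge |V_2|\binom{e(V_1,V_2)/|V_2|}{2}\ge|V_2|\binom{\r'|V_1|}{2}$, and simplify using $|V_1|\ge\r'/\r$ to reach $(\r'^2-2\r)\binom{|V_1|}{2}|V_2|$. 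That convexity route avoids the Markov split entirely and is cleaner; the main obstacle is just bounding the lower-order $\r'|V_1|/|V_2|$-type correction terms against the main term, which is exactly where the hypothesis $|V_1|\ge\r'/\r$ is used.
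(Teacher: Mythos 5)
Your proposal is correct and matches the paper's argument: parts (i) and (ii) are exactly the paper's remark that these follow by counting triples containing a non-edge, and for part (iii) the paper likewise counts paths $P_3$ centred at $V_2$ via convexity of $\binom{x}{2}$ applied to the degree sequence $\deg(\cdot,V_1)$, then subtracts the at most $\rho\binom{|V_1|}{2}|V_2|$ of them whose $V_1$-pair is a non-edge, using $|V_1|\ge\rho'/\rho$ to absorb the linear correction. Your Markov-split alternative would also work but is not what the paper does; the Jensen route you settle on is the paper's proof.
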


\begin{proof}
We only prove (iii) because the first two are immediate by counting the triples containing non-edges.
Since $e(V_1, V_2) \ge \r'|V_1| |V_2|$, the number of copies of $P_3$ centred at some vertex in $V_2$ is at least
\[
\sum_{v \in V_2} {\deg(v)\choose 2} \ge \frac{1}{|V_2|}\frac{(\r'|V_1| |V_2|)^2}{2}-\frac{\r'|V_1| |V_2|}{2} \ge (\r'^2-\r) \binom{|V_1|}{2} |V_2|,
\]
where we used that $|V_1| \ge \r'/\r$.
Note that among these copies of $P_3$, at most $\r \binom{|V_1|}2 |V_2|$ of them miss the edge in $V_1$, and thus the result follows.
\end{proof}

We will also use the following simple fact in the proof of Lemma~\ref{lem:even}.

\begin{fact}\label{fact:lattco}
Given an integer $r\ge 1$ and $\mu \ll \delta, 1/r$, suppose $H$ is an $n$-vertex 3-graph with $\delta_2(H) \ge \delta n$ where $n$ is large enough. Let $\cP=\{V_1, \dots, V_r\}$ be a partition of $V(H)$ with $|V_i|\ge \delta n$. For every 2-vector $\bfv\in \mathbb{Z}^r$, there exists $i\in [r]$ such that $\bfv + \bfu_i\in I_{\cP}^\mu (H)$.
\end{fact}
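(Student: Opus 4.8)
The plan is a direct double-counting plus pigeonhole argument, with no appeal to the reachability machinery. Fix a 2-vector $\bfv\in\mathbb{Z}^r$ and let $\cP_{\bfv}$ denote the collection of all pairs $\{x,y\}\subseteq V(H)$ with $\bfi_{\cP}(\{x,y\})=\bfv$. Since $\bfv$ has nonnegative coordinates summing to $2$, it has one of two shapes: either $\bfv=2\bfu_a$ for some $a\in[r]$, so that $|\cP_{\bfv}|=\binom{|V_a|}{2}$, or $\bfv=\bfu_a+\bfu_b$ for distinct $a,b\in[r]$, so that $|\cP_{\bfv}|=|V_a||V_b|$. Using $|V_i|\ge\delta n$ for all $i$ and taking $n$ large, in both cases I get $|\cP_{\bfv}|\ge \delta^2 n^2/4$.

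Next I would count pairs $(\{x,y\},w)$ with $\{x,y\}\in\cP_{\bfv}$ and $\{x,y,w\}\in E(H)$. Each $\{x,y\}\in\cP_{\bfv}$ has $\deg(xy)\ge\delta_2(H)\ge\delta n$, so there are at least $|\cP_{\bfv}|\cdot\delta n\ge \delta^3 n^3/4$ such pairs, and every one of them yields an edge $e=\{x,y,w\}$ whose index vector is $\bfv+\bfu_i$, where $V_i$ is the part containing $w$. A fixed edge $e$ contains only $\binom{3}{2}=3$ pairs, hence arises in this way at most $3$ times, so the number of edges $e$ of $H$ with $\bfi_{\cP}(e)\in\{\bfv+\bfu_i:i\in[r]\}$ is at least $\delta^3 n^3/12$. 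Since there are at most $r$ vectors of the form $\bfv+\bfu_i$, by pigeonhole one of them, say $\bfv+\bfu_i$, is the index vector of at least $\delta^3 n^3/(12r)$ edges of $H$. As $\mu\ll\delta,1/r$ we may assume $\mu\le\delta^3/(12r)$, so $\bfv+\bfu_i\in I_{\cP}^{\mu}(H)$, which is exactly what is claimed (note $\bfv+\bfu_i$ is automatically a $3$-vector since $\bfv$ is a $2$-vector).

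There is no real obstacle here; the only places where a little care is needed are checking the two possible shapes of a $2$-vector separately so that the bound $|\cP_{\bfv}|\ge\delta^2 n^2/4$ holds uniformly, and bounding by $3$ the number of times a single edge can be generated in the count, which is what converts the count of incidences into a count of distinct edges.
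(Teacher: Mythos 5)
Your proof is correct and follows essentially the same approach as the paper: count pairs of the prescribed index vector, use the codegree bound to count pair–edge incidences, divide by $3$ to pass to distinct edges, and finish with pigeonhole over the $r$ possible extensions $\bfv+\bfu_i$. The paper is only slightly more terse (it bounds the pair count by $\binom{\delta n}{2}$ in both shape cases and packages the factor of $3$ directly into a $\binom{\delta n}{3}$ lower bound), but the argument is the same.
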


\begin{proof}
Fix any 2-vector $\bfv$, the number of pairs $p$ in $V(H)$ with respect to this index vector is at least $\binom{\delta n}2$. Thus the number of hyperedges in $H$ containing these pairs is at least $\frac13\delta n \binom{\delta n}2 \ge \binom{\delta n}3$. Since $\mu \ll \delta$, we have $r\mu n^3 < \binom{\delta n}3$. By averaging, there must be an $i\in [r]$ such that at least $\mu n^3$ edges $e\in E(H)$ satisfy $\mathbf{i}_\cP(e)=\bfv + \bfu_i$, which shows that $\bfv + \bfu_i\in I_{\cP}^\mu (H)$.
\end{proof}

Here we state a simple counting result and omit its proof. 

\begin{proposition}\label{supersaturation}
For $1/n\ll \mu$, every 3-graph $H$ on $n$ vertices with at least $\mu n^3$ edges contains at least $\mu^{8} n^{6}/2$ copies of $K_3^3(2)$.
\end{proposition}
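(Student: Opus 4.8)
The plan is to prove Proposition~\ref{supersaturation} by a standard supersaturation argument: iterate the Cauchy--Schwarz (equivalently, convexity/Jensen) inequality to pass from a lower bound on $e(H)$ to a lower bound on the number of labelled copies of $K_3^3(2)$, and then discard the negligible contribution of degenerate tuples. Write $h(x,y,z)=1$ if $\{x,y,z\}\in E(H)$ and $h(x,y,z)=0$ otherwise; thus $h$ is symmetric, vanishes whenever two of its arguments coincide, and $\sum_{x,y,z\in V}h(x,y,z)=6\,e(H)\ge 6\mu n^3$. Regard a labelled copy of $K_3^3(2)$ as an ordered tuple $(a_1,a_2,b_1,b_2,c_1,c_2)\in V^6$ such that all $8$ triples $\{a_i,b_j,c_k\}$ with $i,j,k\in\{1,2\}$ are edges; each copy of $K_3^3(2)$, understood as a $6$-set together with the tripartition into pairs witnessing it, corresponds to exactly $3!\cdot 2^3=48$ such tuples, while at most $\binom 62 n^5=15n^5$ tuples have a repeated coordinate. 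Hence it suffices to show that
\[
N:=\sum_{a_1,a_2,b_1,b_2,c_1,c_2\in V}\ \prod_{i,j,k\in\{1,2\}}h(a_i,b_j,c_k)\ \ge\ 36^4\mu^8 n^6,
\]
since then the number of copies of $K_3^3(2)$ is at least $\tfrac{1}{48}(36^4\mu^8 n^6-15n^5)>\mu^8 n^6/2$ for $n$ large in terms of $\mu$ (note $36^4/96>\tfrac12$).

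To bound $N$, first I would carry out the sum over $a_1,a_2,b_1,b_2$ with $c_1,c_2$ fixed. Writing $g(x,y)=g_{c_1c_2}(x,y):=h(x,y,c_1)\,h(x,y,c_2)$ (a symmetric function, the indicator of the common link of $c_1$ and $c_2$), the inner sum factorises as $\sum_{a_1,a_2}\bigl(\sum_{b}g(a_1,b)g(a_2,b)\bigr)^{2}$, which is the number of homomorphisms of the $4$-cycle into the graph with adjacency $g$. Two applications of Cauchy--Schwarz---first over the $n^2$ pairs $(a_1,a_2)$, then over the $n$ vertices $b$---show this inner sum is at least $s_{c_1c_2}^{4}/n^{4}$, where $s_{c_1c_2}:=\sum_{x,y}g_{c_1c_2}(x,y)$. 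Summing over the $n^2$ pairs $(c_1,c_2)$ and using convexity once more gives $N\ge n^{-10}\bigl(\sum_{c_1,c_2}s_{c_1c_2}\bigr)^{4}$. Finally $\sum_{c_1,c_2}s_{c_1c_2}=\sum_{x,y}\bigl(\sum_{z}h(x,y,z)\bigr)^{2}\ge n^{-2}\bigl(\sum_{x,y,z}h(x,y,z)\bigr)^{2}\ge n^{-2}(6\mu n^3)^2=36\mu^2 n^4$, and substituting yields $N\ge n^{-10}(36\mu^2 n^4)^4=36^4\mu^8 n^6$, as required.

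I do not expect a real obstacle here; the argument is entirely routine. The two points that require a little care are (a) tracking the exponent of $\mu$ through the chain of convexity steps---the innermost codegree step contributes $\mu^{2}$ and the final step raises it to the fourth power, producing precisely the $\mu^{8}$ in the statement---and (b) confirming that the $O(n^5)$ degenerate tuples, and the bounded multiplicity $48$ with which each genuine copy is counted, do not affect the conclusion. A less explicit alternative would be to combine the Erd\H{o}s box theorem (every $K_3^3(2)$-free $3$-graph on $n$ vertices has $o(n^3)$ edges) with the Erd\H{o}s--Simonovits supersaturation principle, but I prefer the direct computation above because it is self-contained and produces the stated constant.
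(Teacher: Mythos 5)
Your proof is correct. Note that the paper explicitly states this proposition without proof (``Here we state a simple counting result and omit its proof''), so there is no internal argument to compare against; the iterated Cauchy--Schwarz/convexity chain you give is the standard way to supply it, and it is in effect a direct hypergraph analogue of the K\H{o}v\'ari--S\'os--Tur\'an counting argument. I checked the details: the $48$-fold overcounting and the $15n^5$ bound on degenerate tuples are right (the tripartition of a $K_3^3(2)$ is indeed determined by the edge set, so each labelled copy arises from exactly $3!\cdot 2^3$ ordered tuples), the two inner Cauchy--Schwarz steps give $s_{c_1c_2}^4/n^4$, the outer power-mean step gives the factor $n^{-6}$, and the final codegree bound $\sum_{c_1,c_2}s_{c_1c_2}\ge 36\mu^2 n^4$ is correct, yielding $N\ge 36^4\mu^8 n^6$. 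Dividing by $48$ and absorbing the $O(n^5)$ degeneracy error using $1/n\ll\mu$ comfortably beats the claimed $\mu^8 n^6/2$, since $36^4/48\approx 3.5\times 10^4\gg 1/2$.
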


Given a partition $\cP$, $0<\mu<1$ and a $\mu$-robust edge-vector $\mathbf{i}$, by Proposition \ref{supersaturation}, the edges with index vector $\mathbf{i}$ form at least $\mu' n^6$ copies of $C_6$ with index vector $2\bfi$, where $\mu'  = \mu^8/2$, i.e., $2\bfi \in I_{\cP, C}^{\mu'}(H)$. 
For example, given $r=2$ and $(1,2)\in I_{\cP}^{\mu}(H)$, then $(2,4)\in I_{\cP, C}^{\mu'}(H)$.

\begin{proof}[Proof of Lemma~\ref{lem:even}.]
Let $0<1/n\ll \mu\ll \eta \ll \r \ll 1$.
Note that by Proposition~\ref{supersaturation} and $\mu \ll \eta$, instead of assuming that (ii) does not hold, we may assume that there is some 3-vector $\bfv$ such that $\bfv\notin I_{\cP}^\eta (H)$ -- otherwise (ii) holds.
Then it suffices to show that either (i) holds, or $2\bfv \in I_{\cP,C}^{\mu}(H)$.
(The `moreover' part of (ii) will be explained during the proof.)

We will use the following notion in the proof.
Suppose that $\bfv\notin I_{\cP}^\eta (H)$, where $\bfv = \bfu_{i} + \bfu_{j} + \bfu_k$ is a 3-vector for some multi-set $\{i, j, k\}$, $i,j,k\in [r]$.
Let $\bfv' = \bfu_i + \bfu_j$ be a 2-vector.
Then, for each pair $S$ of vertices such that $\bfi_{\cP}(S)=\bfv'$, we call it \emph{bad} if $\deg(S, V_k)\ge \r n$ (otherwise \emph{good}).
Thus, since $\eta \ll \r$ and $|V_1|, |V_2|, |V_3|\ge n/3 - 2\r n$, the number of bad pairs with index vector $\bfv'$ is at most
\[
3\eta n^3/ (\r n) = \frac{3\eta}{\r} n^2 \le \r \text{vol}(V_i, V_j),
\]
where $\text{vol}(V_i, V_j)$ stands for the number of pairs $uv$ such that $u\in V_i$ and $v\in V_j$, i.e. $\text{vol}(V_i, V_j)=|V_i||V_j|$ if $i\neq j$, $\text{vol}(V_i, V_j) = \binom{|V_i|}2$ if $i=j$.
Note that since $\bfv'$ may not be unique, so we may have defined more than one `goodness'.
In each (sub)case of the proof, we will consider the triples with index vector $\bfv$ such that all three pairs in the triple are good (possibly with further restrictions).

\medskip
\noindent\textbf{Case 1. $r=2$.}
By symmetry, we only need to deal with two subcases, $(3,0)\notin I_\cP^{\eta}(H)$ or $(2,1)\notin I_\cP^{\eta}(H)$.

First assume that $(3,0)\notin I_\cP^{\eta}(H)$. 
Note that by Fact~\ref{fact:lattco}, $(3,0)\notin I_\cP^{\eta}(H)$ implies that $(2,1)\in I_\cP^{\eta}(H)$.
Thus $(4,2)\in I_{\cP,C}^{\mu}(H)$ by Proposition \ref{supersaturation}.
Also, note that the number of bad pairs in $V_1$ is at most $\r \binom{|V_1|}2$. By Fact \ref{fact:triangle}(i), there are at least $(1-3\r) \binom{|V_1|}3$ triples in $\binom{V_1}3$ of which all pairs are good. For each such triple, we pick distinct neighbors of the three pairs in $V_2$ and get a copy of $C_6$ with index vector $(3,3)$. 
There are at least 
\begin{equation}\label{eq:case11}
\frac1{6!}(1-3\r) \binom{|V_1|}3 (\delta_2(H) - \r n) \left(\delta_2(H) - \r n-1\right) \left(\delta_2(H) - \r n-2\right) \ge  \mu n^6
\end{equation}
such copies of $C_6$ with index vector $(3,3)$ by $\mu\ll 1$ and $\delta_2(H)\ge n/3 - \r n$. 
This means that $(3,3) \in I_{\cP,C}^{\mu}(H)$. 
Since $(4,2), (3,3)\in I_{\cP,C}^{\mu}(H)$, (i) holds.

Now assume $(2,1)\notin I_\cP^{\eta}(H)$. 
By Fact~\ref{fact:lattco}, $(2,1)\notin I_\cP^{\eta}(H)$ implies that $(1,2)\in I_\cP^{\eta}(H)$.
Thus $(2,4)\in I_{\cP,C}^{\mu}(H)$ by Proposition \ref{supersaturation}.
Note that the number of bad pairs in $V_1\times V_2$ is at most $\r |V_1||V_2|$ and the number of bad pairs in $V_1$ is at most $\r {|V_1|\choose 2}$.
By applying Fact~\ref{fact:triangle}(iii) with $\r'=1-\r$, we see that the number of triples with index vector $(2,1)$ such that all pairs of the triple are good is at least $(1-4\r) \binom{|V_1|}2|V_2|$. For each such triple, we pick distinct neighbors in $V_2$ of the pairs in $V_1\times V_2$ and pick a neighbor in $V_1$ of the pair in $V_1$ and get a copy of $C_6$ with index vector $(3,3)$. There are at least 
\[
\frac1{6!}(1-4\r) \binom{|V_1|}2|V_2| \cdot  (\delta_2(H) - \r n) \left(\delta_2(H) - \r n-1\right) \left(\delta_2(H) - \r n\right) \ge  \mu n^6
\]
such copies of $C_6$ with index vector $(3,3)$.
This means that $(3,3) \in I_{\cP,C}^{\mu}(H)$. Together with $(2,4)\in I_{\cP,C}^{\mu}(H)$, (i) holds.
(By symmetry, this shows the `moreover' part of the lemma.)

\medskip
\noindent\textbf{Case 2. $r=3$.}
By symmetry, we only need to deal with three subcases, $(3,0,0)\notin I_\cP^{\eta}(H)$, $(2,1,0)\notin I_\cP^{\eta}(H)$ or $(1,1,1)\notin I_\cP^{\eta}(H)$.

First assume that $(2,1,0)\notin I_\cP^{\eta}(H)$. 
Note that the number of bad pairs in $V_1$ is at most $\r \binom{|V_1|}2$ and the number of bad pairs in $V_1\times V_2$ is at most $\r |V_1| |V_2|$.
Also note that each good pair $S\in V_1\times V_2$ satisfies that $\deg(S, V_2\cup V_3)\ge \delta_2(H) - \r n\ge (1/3- 2\r)n$, which implies that $\deg(S, V_2)\ge n/7$ or $\deg(S, V_3)\ge n/7$.
Assume that there are at least $\frac{1-\r}2 |V_1| |V_2| \ge |V_1| |V_2|/3$ good pairs $S$ in $V_1\times V_2$ such that $\deg(S, V_2)\ge n/7$ (the other case will be quite similar).
This implies
\[
e(V_1, V_2, V_2) \ge \frac12 \frac{|V_1||V_2|}3 \frac n7 \ge \eta n^3
\]
by $\eta\ll 1$. Thus, $(1,2,0)\in I_{\cP}^{\eta}(H)$ and $(2,4,0)\in I_{\cP, C}^{\mu}(H)$ by Proposition~\ref{supersaturation}.

By applying Fact~\ref{fact:triangle}(iii) with $\r' = 1/3$, the number of triples $\{x,y,z\}$ with 
$x,y\in V_1$, $z\in V_2$ such that $xy$ is good, $\deg(xz, V_2)\ge n/7$ and $\deg(yz, V_2)\ge n/7$ is at least $(1/9 - 2\r) \binom{|V_1|}2|V_2|$. For each such triple, we pick distinct neighbors of $xz, yz$ in $V_2$ and pick a neighbor of $xy$ in $V_1\cup V_3$ and get a copy of $C_6$ with index vector $(3,3,0)$ or $(2,3,1)$. There are at least 
\[
\frac1{6!} (1/9 - 2\r) \binom{|V_1|}2|V_2| \cdot  \frac n7 \left(\frac n7 -1\right) \left(\delta_2(H) - \r n\right) \ge  2\mu n^6
\]
such copies of $C_6$ with index vector $(3,3,0)$ or $(2,3,1)$.
This means that $(3,3,0)$ or $(2,3,1) \in I_{\cP,C}^{\mu}(H)$. Together with $(2,4,0)\in I_{\cP,C}^{\mu}(H)$, (i) holds.

Second assume that $(3,0,0)\notin I_\cP^{\eta}(H)$. 
By the last subcase, we may assume that both $(2,1,0)\in I_\cP^{\eta}(H)$ and $(2,0,1)\in I_\cP^{\eta}(H)$. Then we have $(4,2,0), (4,0,2)\in I_{\cP,C}^{\mu}(H)$ by Proposition \ref{supersaturation}. We treat $V_2 \cup V_3$ as one part and use the proof of the first part in Case 1. 
Note that we can strengthen the consequence of \eqref{eq:case11} to $4\mu n^6$, which allows us to conclude that at least one of $(3,3,0), (3,2,1), (3,1,2), (3,0,3)$ is in $I_{\cP, C}^{\mu}(H)$. 
If $(3,3,0)$ or  $(3,2,1)$ is in $I_{\cP, C}^{\mu}(H)$, then it together with $(4,2,0)$ implies (i). If $(3,1,2)$ or  $(3,0,3)$ is in $I_{\cP, C}^{\mu}(H)$, then it together with $(4,0,2)$ implies (i).
So we are done.

Finally, assume that $(1,1,1)\notin I_\cP^{\eta}(H)$. 
Note that for distinct $i,j\in [3]$, the number of bad pairs in $V_i\times V_j$ is at most $\r |V_i||V_j|$.
By Fact~\ref{fact:triangle}(ii), the number of triples with index vector $(1,1,1)$ such that all pairs are good is at least $(1-3\r) |V_1| |V_2| |V_3|$. For each such triple, we pick distinct neighbors in $V_i\cup V_j$ of the pair in $V_i\times V_j$ for all distinct $i,j\in [3]$ and get a copy of $C_6$. There are at least 
\[
\frac1{6!}(1-3\r) |V_1| |V_2| |V_3| \cdot  (\delta_2(H) - \r n) \left(\delta_2(H) - \r n-1\right) \left(\delta_2(H) - \r n - 2\right) \ge  7\eta n^6
\]
such copies of $C_6$.
Observe that in each such copy of $C_6$, the triple has index vector $(1,1,1)$ and the three new vertices cannot fall into the same part of $\cP$.
So the index vector of such copy of $C_6$ is either $(2,2,2)$ or a permutation of $(3,2,1)$.
We first assume that there are at least $\eta n^6$ such copies of $C_6$ with index vector $(3,2,1)$.
Observe that each such copy of $C_6$ with index vector $(3,2,1)$ contains an edge of index vector $(2,1,0)$ (in fact, the index vectors of the three edges must be exactly $(2,1,0)$, $(2,0,1)$ and $(0,2,1)$).
Thus, we see at least $\eta n^6/n^3= \eta n^3$ edges of index vector $(2,1,0)$, i.e., $(2,1,0)\in I_{\cP}^{\eta}(H)$.
By Proposition~\ref{supersaturation}, this implies that $(4,2,0)\in I_{\cP,C}^{\mu}(H)$.
Together with $(3,2,1)\in I_{\cP,C}^{\mu}(H)$, (i) holds.
By symmetry, the only case left is that $(2,2,2)=2(1,1,1) \in I_{\cP,C}^{\eta}(H)\subseteq I_{\cP,C}^{\mu}(H)$.
Then (ii) holds and we are done.
\end{proof}

\section{Almost perfect $K_k^k(h)$-tiling}

\subsection{The Weak Regularity Lemma}

We first introduce the \emph{Weak Regularity Lemma}, which is a straightforward extension of Szemer\'edi's regularity lemma for graphs \cite{Sze}.

Let $H = (V, E)$ be a $k$-graph and let $A_1, \dots, A_k$ be mutually disjoint non-empty subsets of $V$. We define 
the density of $H$ with respect to ($A_1, \dots, A_k$) as
\[
d(A_1,\dots, A_k) = \frac{e(A_1, \dots, A_k)}{|A_1| \cdots|A_k|}.
\]
We say a $k$-tuple ($V_1, \dots, V_k$) of mutually disjoint subsets $V_1, \dots, V_k\subseteq V$ is \emph{$(\e, d)$-regular}, for $\e>0$ and $d\ge 0$, if
\[
|d(A_1, \dots, A_k) - d|\le \e
\]
for all $k$-tuples of subsets $A_i\subseteq V_i$, $i\in [k]$, satisfying $|A_i|\ge \e |V_i|$. We say ($V_1, \dots, V_k$) is \emph{$\e$-regular} if it is $(\e, d)$-regular for some $d\ge 0$. 

\begin{theorem}[Weak Regularity Lemma]
\label{thmReg}
Given $t_0\ge 0$ and $\e>0$, there exist $T_0 = T_0(t_0, \e)$ and $n_0 = n_0(t_0,\e)$ so that for every $k$-graph $H = (V, E)$ on $n>n_0$ vertices, there exists a partition $V = V_0 \cup V_1 \cup \cdots \cup V_t$ such that
\begin{enumerate}
\item[(i)] $t_0\le t\le T_0$,
\item[(ii)] $|V_1| = |V_2| = \cdots = |V_t|$ and $|V_0|\le \e n$,
\item[(iii)] for all but at most $\e \binom tk$ $k$-subsets $\{i_1,\dots, i_k\} \subset [t]$, the $k$-tuple $(V_{i_1}, \dots, V_{i_k})$ is $\e$-regular.
\end{enumerate}
\end{theorem}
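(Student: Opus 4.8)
The plan is to run the standard energy-increment (index) argument, which transfers from the graph case essentially verbatim. For a partition $\mathcal{P}$ of $V$ into $V_0,V_1,\dots,V_t$ with $|V_1|=\dots=|V_t|$, define the \emph{index}
\[
q(\mathcal{P})=\sum \frac{|V_{i_1}|\cdots|V_{i_k}|}{n^k}\, d(V_{i_1},\dots,V_{i_k})^2 ,
\]
the sum ranging over all $k$-tuples $(i_1,\dots,i_k)$ of distinct indices in $[t]$. Since every density lies in $[0,1]$ we have $0\le q(\mathcal{P})\le 1$, and $q$ can only increase, up to a negligible error, when $\mathcal{P}$ is refined. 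The entire proof reduces to showing that a partition which fails conclusion~(iii) admits a refinement increasing $q$ by a fixed amount $c=c(\e,k)>0$; because $q\le 1$, this can occur at most $\lceil 1/c\rceil$ times, so after boundedly many refinements we reach a partition satisfying (i)--(iii).

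The analytic heart is the defect form of the Cauchy--Schwarz inequality. Suppose $(V_{i_1},\dots,V_{i_k})$ is not $\e$-regular, witnessed by subsets $A_{i_j}\subseteq V_{i_j}$ with $|A_{i_j}|\ge \e|V_{i_j}|$ and $|d(A_{i_1},\dots,A_{i_k})-d(V_{i_1},\dots,V_{i_k})|>\e$. Partition each $V_{i_j}$ into $A_{i_j}$ and $V_{i_j}\setminus A_{i_j}$. Writing $d(V_{i_1},\dots,V_{i_k})$ as the size-weighted average of the densities of the $2^k$ resulting sub-$k$-tuples and applying Cauchy--Schwarz, the mean-square density of these $2^k$ sub-tuples exceeds $d(V_{i_1},\dots,V_{i_k})^2$ by at least a constant multiple of $\e^{k+2}$, the surplus being precisely the contribution of the deviating box $A_{i_1}\times\cdots\times A_{i_k}$. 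Now, given a partition $\mathcal{P}$ violating (iii), at least $\e\binom{t}{k}$ of its $k$-tuples are irregular; for each fix such witnessing subsets and let $\mathcal{P}'$ be the common refinement. Each part $V_i$ lies in at most $\binom{t-1}{k-1}$ of the tuples and receives one binary cut from each, so $V_i$ is split into at most $2^{\binom{t-1}{k-1}}$ pieces; summing the local gains over the $\ge \e\binom{t}{k}$ irregular tuples (the gains being attributed to disjoint sub-boxes, so no uncontrolled double counting) yields $q(\mathcal{P}')\ge q(\mathcal{P})+c(\e,k)$ for an absolute constant $c(\e,k)>0$. This is the crucial point, and the place demanding the most care.

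Finally one must keep the partition equitable and control the exceptional set. After a refinement the pieces need not be equal, so re-partition every piece into chunks of a fixed common size $\lfloor n/T\rfloor$ and place the bounded number of leftover vertices, together with the old $V_0$, into the new $V_0$; a standard convexity estimate shows that passing to this equipartition changes $q$ by only $o(1)$, which is absorbed by taking $n$ large, leaving a net gain of at least $c(\e,k)/2$ per step. Since there are $O(1/c(\e,k))$ steps and each multiplies the number of parts by at most $2^{\binom{t-1}{k-1}}$, the final $t$ is bounded by a tower-type function of $\e$ and $t_0$, defining $T_0(t_0,\e)$; likewise $|V_0|$ accrues only a bounded number of $o(n)$ terms and stays below $\e n$, and starting from an initial equipartition into $\ge t_0$ parts ensures $t\ge t_0$. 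I expect the main obstacle to be exactly the bookkeeping of the previous paragraph: checking that the per-step energy increment is bounded below by a constant independent of $t$ and $n$, with the right power of $\e$, while simultaneously bounding the proliferation of parts and the growth of $V_0$; the defect Cauchy--Schwarz estimate itself is routine once the exponent is pinned down.
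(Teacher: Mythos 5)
The paper does not prove this lemma; it states it as ``a straightforward extension of Szemer\'edi's regularity lemma for graphs'' and cites \cite{Sze}. Your proposal reproduces exactly that standard energy-increment argument adapted to $k$-tuples (mean-square-density index, defect Cauchy--Schwarz on a deviating box of relative weight $\ge \e^k$ giving a per-tuple gain of order $\e^{k+2}$, part proliferation bounded by $2^{\binom{t-1}{k-1}}$ per step, equitable re-chopping feeding the remainder into $V_0$), so it is correct and takes the same route the paper implicitly refers to.
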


The partition given in Theorem \ref{thmReg} is called an \emph{$\e$-regular partition} of $H$. Given an $\e$-regular partition of $H$ and $d\ge 0$, we refer to $ V_i, i\in [t]$ as \emph{clusters} and define the \emph{cluster hypergraph} $\K = \K(\e,d)$ with vertex set $[t]$ and $\{i_1,\dots,i_k\}\subset [t]$ is an edge if and only if $(V_{i_1}, \dots, V_{i_k})$ is $\e$-regular and $d(V_{i_1}, \dots, V_{i_k}) \ge d$.

We combine Theorem \ref{thmReg} and \cite[Proposition 16]{HS} into the following corollary, which shows that the cluster hypergraph almost inherits the minimum degree of the original hypergraph.
Its proof is standard and similar as the one of \cite[Proposition 16]{HS} so we omit it.

\begin{corollary} [\cite{HS}]
\label{prop16}
Given $c, \e, d>0$ and $t_0$, there exist $T_0$ and $n_0$ such that the following holds. Let $H$ be a $k$-graph on $n>n_0$ vertices with $\delta_{k-1}(H)\ge c n$. Then $H$ has an $\e$-regular partition $V_0\cup V_1 \cup \cdots \cup V_t$ with $t_0\le t\le T_0$, and in the cluster hypergraph $\K = \K(\e,d)$, all but at most $\sqrt \e t^{k-1}$ $(k-1)$-subsets $S$ of $[t]$ satisfy $\deg_{\K}(S)\ge (c - d - \sqrt{\e})t - (k-1)$.
\end{corollary}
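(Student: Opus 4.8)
The plan is to run the standard regularity-plus-averaging argument underlying \cite[Proposition 16]{HS}. First I would fix an auxiliary constant $0<\e'\le\e/4$ (so that $\e'+\sqrt{\e'}\le\sqrt\e$ and $\sqrt{\e'}/(k-1)!\le\sqrt\e$, say assuming $\e\le 1$ as we may) and apply Theorem~\ref{thmReg} to $H$ with parameters $\e'$ and $t_0$, setting $T_0:=T_0(t_0,\e')$. This produces a partition $V=V_0\cup V_1\cup\cdots\cup V_t$ with $t_0\le t\le T_0$, all clusters of a common size $m:=|V_1|=\cdots=|V_t|$ (so that $tm\le n$, i.e.\ $n/m\ge t$), $|V_0|\le\e' n$, and at most $\e'\binom tk$ of the $k$-subsets of $[t]$ spanning an $\e'$-irregular $k$-tuple. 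Since an $\e'$-regular tuple is a fortiori $\e$-regular, it suffices to establish the claimed degree bound for the cluster hypergraph $\K(\e',d)$ on this partition.

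Next I would fix a $(k-1)$-set $S=\{i_1,\dots,i_{k-1}\}\subseteq[t]$ and count the edges of $H$ meeting each of $V_{i_1},\dots,V_{i_{k-1}}$ in exactly one vertex. For every $(x_1,\dots,x_{k-1})\in V_{i_1}\times\cdots\times V_{i_{k-1}}$ the set $\{x_1,\dots,x_{k-1}\}$ has codegree at least $cn$ in $H$, at most $|V_0|\le\e' n$ of whose neighbours lie in $V_0$; summing over the $m^{k-1}$ choices gives
\[
\sum_{j=1}^{t}e(V_{i_1},\dots,V_{i_{k-1}},V_j)\ \ge\ (c-\e')\,n\,m^{k-1}.
\]
I would then split this sum according to the type of $j$: the at most $k-1$ indices $j\in S$ contribute at most $(k-1)m^k$ altogether; each $j\notin S$ with $S\cup\{j\}$ $\e'$-irregular contributes at most $m^k$; each $j\notin S$ for which $(V_{i_1},\dots,V_{i_{k-1}},V_j)$ is $\e'$-regular with density below $d$ contributes less than $d\,m^k$, hence less than $d\,m^k t\le d\,m^{k-1}n$ in total; and the remaining indices $j$ are exactly those enumerated by $\deg_{\K}(S)$, each contributing at most $m^k$.

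Finally, to control how often the irregular term is large, I would let $B_S$ be the number of $j\in[t]\setminus S$ with $S\cup\{j\}$ $\e'$-irregular; since $\sum_S B_S$ counts each $\e'$-irregular $k$-set exactly $k$ times, $\sum_S B_S\le k\e'\binom tk\le\e' t^k/(k-1)!$, so by Markov's inequality at most $\sqrt{\e'}\,t^{k-1}/(k-1)!\le\sqrt\e\,t^{k-1}$ of the $(k-1)$-sets $S$ have $B_S\ge\sqrt{\e'}\,t$. Call these \emph{exceptional}; for every other $S$, combining the four estimates, dividing by $m^k$, and rearranging (using $n/m\ge t$, and that $c-d-\e'\ge0$, else the assertion is vacuous) yields
\[
\deg_{\K}(S)\ \ge\ (c-d-\e')\,\frac nm-\sqrt{\e'}\,t-(k-1)\ \ge\ (c-d-\e'-\sqrt{\e'})\,t-(k-1)\ \ge\ (c-d-\sqrt\e)\,t-(k-1).
\]
I do not expect any genuine obstacle here; this is the standard derivation of a minimum-degree bound for the reduced hypergraph, and the only delicate point is the bookkeeping — choosing $\e'$ small enough in terms of $\e$ and $k$ that both the exceptional proportion and the accumulated slack land below $\sqrt\e$, and remembering to remove from $\deg_{\K}(S)$ the $O(1)$ clusters $V_j$ with $j\in S$ (which is the source of the $-(k-1)$) as well as the part $V_0$ absorbed into the $\e'$ error.
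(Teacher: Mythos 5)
Your proof is correct and is precisely the standard regularity-plus-double-counting argument that the paper invokes but omits (the authors only remark that the proof ``is standard and similar as the one of [HS, Proposition 16]''). Applying the Weak Regularity Lemma at a smaller parameter $\e'$, summing codegrees over $V_{i_1}\times\cdots\times V_{i_{k-1}}$, splitting the sum by whether $j\in S$, whether $S\cup\{j\}$ is $\e'$-irregular, or whether it is regular with low density, and then controlling the irregular term on average via Markov is exactly the intended derivation; the parameter bookkeeping ($\e'+\sqrt{\e'}\le\sqrt\e$, the factor $1/(k-1)!$ absorbing the Markov constant, the $-(k-1)$ from indices $j\in S$, and $n/m\ge t$) all check out.
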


\subsection{The Proof of Lemma \ref{lem:tiling}}
The following lemma provides an almost perfect matching under the defect minimum codegree as in Corollary \ref{prop16}. Its proof is similar to the proof of \cite[Lemma 1.7]{Han14_mat}.

\begin{lemma} [Almost perfect matching] \label{almost}
For any integer $k\ge 3$ and $0<\e \ll \a, \r$ the following holds for sufficiently large $n$. Let $H=(V, E)$ be an $n$-vertex $k$-graph such that all but at most $\e n^{k-1}$ $(k-1)$-sets $S\subseteq V$ satisfy that $\deg(S)\ge n/k-\r n$. 
If $H$ is not $\r$-extremal, then $H$ contains a matching that covers all but at most $\a n$ vertices of $V$.
\end{lemma}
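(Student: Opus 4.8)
The plan is to follow the now-standard absorption-free argument for almost perfect matchings under a defect minimum-degree condition, adapting the proof of \cite[Lemma 1.7]{Han14_mat}. Let $M$ be a maximum matching in $H$, and suppose for contradiction that $M$ misses more than $\alpha n$ vertices; write $U = V\setminus V(M)$, so $|U|>\alpha n$ and $U$ is independent in $H$. The first step is to observe that any $(k-1)$-set inside $U$ with $\deg(S)\ge n/k-\gamma n$ cannot have a neighbour in $U$ (else $M$ is not maximal), so all of its $\ge n/k-\gamma n$ neighbours lie in $V(M)$. Since at most $\varepsilon n^{k-1}$ $(k-1)$-sets are ``bad'', all but a negligible fraction of $(k-1)$-subsets of $U$ have this property; I will call these the \emph{good} $(k-1)$-sets in $U$.

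The heart of the argument is a \emph{switching}/augmentation step: for an edge $e=\{v_1,\dots,v_k\}\in M$, if several good $(k-1)$-sets in $U$ each send an edge into $e$ using complementary vertices of $e$, one can replace $e$ by two (or more) edges and enlarge the matching, a contradiction. Concretely, count pairs $(S, e)$ where $S$ is a good $(k-1)$-set in $U$ and $S\cup\{v\}\in E$ for some $v\in e\in M$. On one hand this count is at least $(\binom{|U|}{k-1}-\varepsilon n^{k-1})(n/k-\gamma n)\ge c' n^k$ for a suitable constant; on the other hand each edge $e\in M$ contributes at most $k\binom{|U|}{k-1}$ such pairs, and $|M|\le n/k$, so on average each edge of $M$ receives $\ge c'' n^{k-1}$ good $(k-1)$-sets of $U$ into it. A Zarank-type / pigeonhole argument then shows that for some edge $e$ the good $(k-1)$-sets entering $e$ are not all routed through a single vertex of $e$: either we find two disjoint good $(k-1)$-sets $S_1,S_2\subseteq U$ and a partition of $e$ into two parts $e_1,e_2$ with $S_1\cup e_1, S_2\cup e_2\in E$ (giving an augmentation, hence a contradiction), or essentially all good $(k-1)$-sets of $U$ entering a fixed large sub-collection of $M$-edges go through one special vertex per edge. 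Iterating/collecting, this forces a set $T$ of $\le |M|\le n/k$ ``absorbing'' vertices such that $U$ together with essentially all of $V(M)\setminus T$ induces very few edges: the remaining $\ge n/k$ vertices from the $M$-edges contribute almost nothing, and neither does $U$, so $H$ restricted to a vertex set of size $\lfloor\frac{k-1}{k}n\rfloor$ (namely $U$ plus one chosen vertex from all but $n/k$ of the $M$-edges, padded out) spans at most $\gamma n^k$ edges. That contradicts the hypothesis that $H$ is not $\gamma$-extremal.

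In carrying this out I would be careful about two points. First, the definition of $\gamma$-extremal requires a set of size exactly $\lfloor\frac{k-1}{k}n\rfloor$ with $\le\gamma n^k$ edges, so the bookkeeping must produce \emph{exactly} that many vertices contributing few edges; this is why one keeps one representative vertex from each of the $\approx(k-1)n/k$ ``non-absorbing'' $M$-edges together with all of $U$, and the edge count among them must be shown to be $o(n^k)$ using that good $(k-1)$-sets of $U$ meet each such edge in at most one vertex and that $U$ is independent. Second, the $\varepsilon n^{k-1}$ bad $(k-1)$-sets must be handled: since $\varepsilon\ll\alpha,\gamma$, any bad set is absorbed into the error terms both in the counting lower bound and in the final edge-count estimate.

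The main obstacle I expect is the switching/augmentation step: turning ``edge $e\in M$ receives many good $(k-1)$-sets from $U$'' into either a genuine augmentation of the matching or a structural (extremal) conclusion. The difficulty is that an augmentation requires $k$-partition compatibility — two good $(k-1)$-sets $S_1,S_2\subseteq U$ together with a partition $e=e_1\cup e_2$ with $|e_1|=|e_2|$ impossible unless $k$ is even, so for general $k$ one needs to replace a single edge $e$ by more edges using additional good sets, or to reroute through two edges of $M$ simultaneously. Managing these cases cleanly, while keeping track of disjointness of the chosen $(k-1)$-sets from $U$ and from each other, is where the real work lies; everything else is routine counting with the constants $\varepsilon\ll\alpha,\gamma$.
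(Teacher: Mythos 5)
Your overall architecture is right — maximum matching $M$, independent $U = V\setminus V(M)$, augmentation versus extremality — but the augmentation step you describe is the wrong one, and the obstacle you flag (``partition $e=e_1\cup e_2$ with $|e_1|=|e_2|$'') is a red herring: no such partition is needed. The correct switching never tries to cover all vertices of $e$; it simply replaces $e$ by \emph{two} edges of the form $\{x\}\cup A_i$ and $\{y\}\cup A_j$ with $x,y\in e$ distinct and $A_i,A_j$ disjoint $(k-1)$-sets in $U$. This increases $|M|$ by one and leaves the other $k-2$ vertices of $e$ uncovered, which is harmless. So there is no parity issue for any $k\ge 2$.

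The paper's proof makes this precise as follows. Fix $t=\lceil k/\gamma\rceil$ and greedily choose disjoint good $(k-1)$-sets $A_1,\dots,A_t\subseteq U$. Define $D$ to be the set of $v\in V(M)$ with $\{v\}\cup A_i\in E$ for at least $k$ indices $i\in[t]$. Two things then follow. First, $|e\cap D|\le 1$ for every $e\in M$: if $x,y\in e\cap D$, one can pick distinct $A_i,A_j$ with $\{x\}\cup A_i,\{y\}\cup A_j\in E$ and augment as above. Second, double-counting $\sum_i\deg(A_i)$ (and using that $U$ is independent) gives $|D|\ge(1/k-2\gamma)n$. Now let $V_D$ be the union of the $M$-edges meeting $D$, so $|V_D\setminus D|=(k-1)|D|\ge(k-1)(1/k-2\gamma)n$. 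If $H[V_D\setminus D]$ were empty, padding $V_D\setminus D$ to size $\lfloor\frac{k-1}{k}n\rfloor$ yields a set spanning $\le\gamma n^k$ edges, contradicting non-extremality. Hence $H[V_D\setminus D]$ contains an edge $e_0$, which meets some $e_{i_1},\dots,e_{i_l}\in M$ with $2\le l\le k$; each $e_{i_j}$ has a unique $v_{i_j}\in D$, and since $e_0\subseteq V_D\setminus D$ it avoids every $v_{i_j}$. Choosing $l$ pairwise disjoint $A$'s with $\{v_{i_j}\}\cup A_{i_j}\in E$ (possible since $v_{i_j}\in D$ and $l\le k\le t$), replacing $e_{i_1},\dots,e_{i_l}$ by $e_0$ and the $l$ edges $\{v_{i_j}\}\cup A_{i_j}$ gives a matching of size $m+1$, a contradiction.

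So the genuine gap in your write-up is the missing notion of the set $D$, the bound $|e\cap D|\le 1$, and the specific use of non-extremality to find the single edge $e_0$ inside $V_D\setminus D$ that drives the final augmentation. Once you have $D$, the ``many good $(k-1)$-sets entering one $M$-edge'' dichotomy you were aiming at collapses automatically, and there is no parity obstruction to worry about.
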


\begin{proof}
Let $M=\{e_1, e_2, \dots, e_m\}$ be a maximum matching of size $m$ in $H$. Let $V' = V(M)$ and let $U = V\setminus V'$. We assume that $H$ is not $\r$-extremal and $|U|> \a n$. 
Note that $U$ is an independent set by the maximality of $M$. 

Let $t=\lceil k/\r \rceil$. 
We greedily pick disjoint $(k-1)$-sets $A_1, \dots, A_t$ in $U$ such that $\deg(A_{i}) \ge n/k - \r n$ for all $i\in [t]$.
This is possible since in each step, the number of $(k-1)$-sets that intersect the existing sets or have low degree is at most
\[
(k-1)t \cdot \binom{|U|}{k-2} + \e n^{k-1} \le \frac{k^3}{\r |U|} \binom{|U|}{k-1} + \frac{k! \e}{\a^{k-1}} \binom{|U|}{k-1} < \binom{|U|}{k-1},
\]
because $|U| > \a n > 2k^3/\r$ and $\e \ll \a$.
So we can pick the desired $(k-1)$-set.

Let $D$ be the set of vertices $v\in V'$ such that $\{v\}\cup A_{i}\in E$ for at least $k$ sets $A_{i}$, $i\in [t]$. 
We claim that $|e_i\cap D|\le 1$ for any $i\in [m]$. Indeed, otherwise, assume that $x, y\in e_i\cap D$. By the definition of $D$, we can  pick $A_i, A_j$ for some distinct $i, j\in [t]$ such that $\{x\}\cup A_i\in E$ and $\{y\}\cup A_j\in E$. We obtain a matching of size $m+1$ by replacing $e_i$ in $M$ by $\{x\}\cup A_i$ and $\{y\}\cup A_j$, contradicting the maximality of $M$.

We claim that $|D|\ge (\frac 1k -2\r)n$. Indeed, by the degree condition, we have
\[
t\left(\frac 1k -\r \right)n\le \sum_{i=1}^t \deg(A_{i})\le |D| t +n\cdot k,
\]
where we use the fact that $U$ is an independent set.
So we get
\[
|D|\ge \left(\frac 1k -\r \right)n - \frac{n k}{t} \ge \left(\frac 1k -2\r \right)n,
\]
where we use $t\ge k/\r$.

Let $V_D:=\bigcup\{e_i, e_i\cap D\neq \emptyset\}$. Note that $|V_D\setminus D| = (k-1)|D|\ge (k-1)(\frac 1k - 2\r)n = \frac{k-1}{k}n - 2\r(k-1)n$.
We observe that if $H[V_D\setminus D]$ spans no edge, then by adding $\lfloor \frac{k-1}kn \rfloor - |V_D\setminus D| \le 2\r (k-1) n$ vertices, we get a set of size $\lfloor \frac{k-1}kn \rfloor$ which spans at most
\[
2\r(k-1)n \binom{\frac{k-1}kn}{k-1} < \r n^k
\]
edges.
Since $H$ is not $\r$-extremal, $H[V_D\setminus D]$ contains at least one edge, denoted by $e_0$.
We assume that $e_0$ intersects $e_{i_1}, \dots, e_{i_l}$ in $M$ for some $2\le l\le k$. Suppose $\{v_{i_j}\}= e_{i_j}\cap D$ for all $j\in [l]$. By the definition of $D$, we can greedily pick $A_{i_1}, \dots, A_{i_l}$ such that  $\{v_{i_j}\}\cup A_{i_j} \in E$ for all $j\in [l]$. Let $M''$ be the matching obtained from replacing the edges $e_{i_1}, \dots, e_{i_l}$ by $e_0$ and $\{v_{i_j}\}\cup A_{i_j}$ for $j\in [l]$. Thus, $M''$ has $m+1$ edges, contradicting the maximality of $M$.
\end{proof}

Now we are ready to prove Lemma \ref{lem:tiling}.

\begin{proof}[Proof of Lemma~\ref{lem:tiling}.]
Fix integers $k, h$, $0<\e \ll \r,\a<1$. Let $n'$ be the constant returned from Lemma \ref{almost} with $0<\e \ll 2\r, \a$. 
Let $T_0$ be the constant returned from Corollary \ref{prop16} with $c=\frac 1{k} - \r$, $\e^2$, $d=\r/2$ and 
$t_0 > \max\{n', 4k/\r\}$.

Let $n$ be sufficiently large and let $H$ be a $k$-graph on $n$ vertices with $\delta_{k-1}(H) \ge (\frac 1{k} - \r) n$. Applying Corollary \ref{prop16} with the constants chosen above, we obtain an $\e^2$-regular partition and a cluster hypergraph $\K = \K(\e^2, d)$ on $[t]$ such that for all but at most $\e t^{k-1}$ $(k-1)$-sets $S\in \binom{[t]}{k-1}$,
\[
\deg_{\K}(S)\ge \left(\frac 1{k} - \r - d - \e \right)t - (k-1) \ge \left(\frac 1{k} - 2\r \right)t,
\]
because $d=\r/2$, $\e<\r/4$ and $k-1< \r t_0/4\le \r t/4$.
Let $m$ be the size of the clusters, then $(1-\e^2)\frac nt\le m\le \frac nt$. Applying Lemma \ref{almost} with the constants chosen above, we derive that
either there is a matching $M$ in $\K$ which covers all but at most $\a t$ vertices of $\K$ or there exists a set $B\subseteq V(\K)$, such that $|B| = \lfloor \frac{k-1}{k}t \rfloor $ and $e_{\K}(B)\le \r t^{k}$. In the latter case, let $B'\subseteq V(H)$ be the union of the clusters in $B$. By regularity,
\[
e_{H}(B')\le e_{\K}(B)\cdot m^k + \binom tk \cdot d \cdot m^k + \e^2 \cdot \binom tk \cdot m^k + t \binom m2 \binom n{k-2},
\]
where the right-hand side bounds the number of edges from regular $k$-tuples with high density, edges from regular $k$-tuples with low density, edges from irregular $k$-tuples and edges that lie in at most $k-1$ clusters. 
Since $m\le \frac nt$, $\e \ll \r$, $d=\r/2$, and $t^{-1}< t_0^{-1}< \r/(4k)$, we obtain that
\begin{align*}
e_{H}(B')\le \r t^k \cdot \left( \frac nt \right)^k + \binom tk \frac{\r}{2}  \left( \frac nt \right)^k  + \frac{\r}{16} \binom tk \left( \frac nt \right)^k + t \binom{n/t}2 \binom n{k-2} <\frac32 \r n^k.
\end{align*}
Note that $|B'|= \lfloor \frac{k-1}{k}t \rfloor m \le \frac{k-1}{k}t \cdot \frac nt= \frac{k-1}{k} n$, and consequently $|B'|\le \lfloor \frac{k-1}{k}n \rfloor$. On the other hand,
\begin{align*}
|B'| &= \left\lfloor \frac{k-1}{k}t \right\rfloor m\ge \left( \frac{k-1}{k}t-1 \right)(1-\e^2)\frac nt \ge \left( \frac{k-1}{k}t - \e^2 \frac{k-1}{k}t - 1 \right)\frac nt\\
&\ge \left( \frac{k-1}{k}t-\e^2 t \right)\frac nt =\frac{k-1}{k}n-\e^2 n.
\end{align*}
By adding at most $\e^2 n$ vertices from $V\setminus B'$ to $B'$, we get a set $B''\subseteq V(H)$ of size exactly $\lfloor \frac{k-1}{k}n \rfloor$, with $e(B'')\le e(B') + \e^2 n \cdot n^{k-1}<2\r n^k$. Hence $H$ is $2\r$-extremal.

In the former case, the union of the clusters covered by $M$ contains all but at most $\a t m+|V_0|\le \a n+ \e^2 n$ vertices of $H$. 
We apply the following procedure to each member $e\in M$. 
Note that the corresponding set of clusters $V_{i_1}, \dots, V_{i_k}$ for $e$ forms an $\e^2$-regular $k$-tuple.
By \cite{erdos}, we can greedily find vertex-disjoint copies of $K_k^k(h)$ until the regularity does not hold, i.e., the set of uncovered vertices in each $V_{i_j}$ has size at most $\e^2 m$.
Since $|M|\le \frac t{k}$, we thus obtain $K_k^k(h)$-tiling of $H$ covering all but at most
\[
k\e^2 m \cdot \frac{t}{k}+\a n+\e^2 n < 2\e^2 n + \a n  < 2\a n
\]
vertices of $H$, as $\e\ll \a$. This completes the proof.
\end{proof}

\section{The Extremal Case}

In this section we prove Theorem \ref{thm:E}. Take $0< {\e}\ll 1$ and let $n\in 6\mathbb{N}$ be sufficiently large. Let $\e_0=24\e$. Let $H=(V, E)$ be an $n$-vertex 3-graph with $\delta_2(H)\ge n/3$ which is ${\e}$-extremal, namely, there exists a set $B\subseteq V(H)$ of size $2n/3$ and
\begin{equation}\label{eqB}
e(B)\le \e n^3 = \frac{27}8 \e |B|^3 \le \e_0 \binom{|B|}3.
\end{equation}

Let $\e_1=8\sqrt{\e_0}$ and $A= V(H)\setminus B$. Assume that the partition $A$ and $B$ satisfies that $|B| = 2n/3$ and \eqref{eqB}. In addition, assume that $e(B)$ is the smallest among all the partitions satisfying these conditions. We now define
\begin{align*}
&A':=\left\{ v\in V\mid \deg (v,B)\ge (1-\e_1)\binom{|B|}{2} \right\}, \\
&B':=\left\{ v\in V\mid\deg (v,B)\le \e_1\binom{|B|}{2} \right\}, \\
& V_0=V\setminus(A'\cup B').
\end{align*}

The following simple claim appeared in \cite{HZ1}. We include its proof for completeness.

\begin{claim}\label{clm:eB}
$A\cap B'\neq \emptyset$ implies that $B\subseteq B'$, and $B\cap A'\neq \emptyset$ implies that $A\subseteq A'$.
\end{claim}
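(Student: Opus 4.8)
The plan is to prove both implications by a swapping argument that exploits the minimality of $e(B)$ among all valid partitions. Both statements are symmetric in flavor, so I would focus on the first one: assuming there is a vertex $a \in A \cap B'$, I want to show $B \subseteq B'$. Suppose not, so there is some $b \in B \setminus B'$, i.e. $\deg(b, B) > \e_1 \binom{|B|}{2}$.

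First I would form a new partition by swapping $a$ and $b$: set $\tilde B = (B \setminus \{b\}) \cup \{a\}$ and $\tilde A = (A \setminus \{a\}) \cup \{b\}$. This keeps $|\tilde B| = 2n/3$. The key step is to compare $e(\tilde B)$ with $e(B)$. We have
\[
e(\tilde B) = e(B) - \deg(b, B \setminus \{b\}) + \deg(a, B \setminus \{b\}),
\]
where $\deg(b, B\setminus\{b\})$ counts edges of $H$ inside $B$ containing $b$, and $\deg(a, B\setminus\{b\})$ counts edges inside $\tilde B$ containing $a$ but not $b$ (so it is at most $\deg(a,B)$ plus a negligible correction since only pairs inside $B\setminus\{b\}$ matter; in fact $\deg(a, B\setminus\{b\}) \le \deg(a,B) \le \e_1\binom{|B|}{2}$ because $a \in B'$). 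Since $b \notin B'$, $\deg(b, B\setminus\{b\}) > \e_1\binom{|B|}{2}$. Therefore $e(\tilde B) < e(B)$. I also need to check $\tilde B$ still satisfies \eqref{eqB}, i.e. $e(\tilde B) \le \e_0 \binom{|B|}{3}$; this is immediate since $e(\tilde B) < e(B) \le \e_0\binom{|B|}{3}$. But then $\tilde B$ is a valid partition with strictly smaller $e(\tilde B)$, contradicting the minimality assumption on $e(B)$. Hence no such $b$ exists, so $B \subseteq B'$.

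The second implication is entirely analogous with $A'$ in place of $B'$ and the roles of ``large codegree into $B$'' and ``small codegree into $B$'' exchanged: if $b \in B \cap A'$ but some $a \in A \setminus A'$, swap $a$ and $b$; now $\deg(a, B\setminus\{b\}) < (1-\e_1)\binom{|B|}{2} - O(|B|)$ while $\deg(b, B\setminus\{b\}) \ge (1-\e_1)\binom{|B|}{2} - (|B|-1)$, and one checks $e(\tilde B) = e(B) - \deg(b, B\setminus\{b\}) + \deg(a, B\setminus\{b\}) < e(B)$, again contradicting minimality. (Here one must be slightly careful that $\deg(a, B\setminus\{b\})$ refers to pairs within $\tilde B \setminus \{b\} = B \setminus \{b\}$, and $\deg(b, B\setminus\{b\})$ to pairs within $B$; the lost edges at $b$ and gained edges at $a$ differ from the raw codegrees $\deg(\cdot, B)$ by at most $|B|$, which is absorbed by the gap $\e_1\binom{|B|}{2} \gg |B|$ for $n$ large.)

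The main obstacle, such as it is, is purely bookkeeping: making sure the edge-count change $e(\tilde B) - e(B)$ is expressed correctly in terms of link degrees relative to the right vertex sets (pairs inside $B\setminus\{b\}$ versus inside $B$), and verifying that the $\Theta(|B|)$ error terms coming from the vertex $b$ being removed are swamped by the $\e_1\binom{|B|}{2}$ gap. There is no real combinatorial difficulty here — the content is entirely in the choice of the minimal partition, which has already been set up before the claim.
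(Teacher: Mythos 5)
Your proposal is correct and is essentially the same swap-and-minimality argument the paper uses; the paper just states it more tersely and, notably, doesn't need the $O(|B|)$ hedging you introduce, since $\deg(b,B\setminus\{b\})=\deg(b,B)$ exactly (as $b\in B$ and codegree is already taken over pairs avoiding $b$) and $\deg(a,B\setminus\{b\})\le\deg(a,B)$, so the strict inequality $e(\tilde B)<e(B)$ falls out with no error terms at all.
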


\begin{proof}
First, assume that $A\cap B'\neq \emptyset$. Then there is some $u\in A$ which satisfies that $\deg(u,B)\le \e_1\binom{|B|}2$. If there exists some $v\in B\setminus B'$, namely, $\deg(v, B)>\e_1\binom{|B|}2$, then we can switch $u$ and $v$ and form a new partition $A''\cup B''$ such that $|B''|=|B|$ and $e(B'')<e(B)$, which contradicts the minimality of $e(B)$.

Second, assume that $B\cap A'\neq \emptyset$. Then some $u\in B$ satisfies that $\deg(u,B)\ge (1-\e_1)\binom{|B|}2$. Similarly, by the minimality of $e(B)$, we get that for any vertex $v\in A$, $\deg(v, B)\ge (1-\e_1)\binom{|B|}2$, which implies that $A\subseteq A'$.
\end{proof}

\begin{claim}\label{clm:size}
$\{|A\setminus A'|, |B\setminus  B'|, |A'\setminus  A|, |B'\setminus  B|\}\le \frac{\e_1}{64}|B|$ and $|V_0|\le \frac{\e_1}{32}|B|$.
\end{claim}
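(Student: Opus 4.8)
The plan is to derive all five bounds from a single double count of $e(B)$ against the codegree hypothesis, exploiting that $\e_1=8\sqrt{\e_0}$ is tuned so that $\e_0/\e_1=\sqrt{\e_0}/8=\e_1/64$, and that the codegree threshold $n/3$ equals exactly $|A|$ while $|B|=2|A|$.

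\textbf{Bounding $|B\setminus B'|$.} By definition every $v\in B\setminus B'$ has $\deg(v,B)>\e_1\binom{|B|}{2}$. Since $\sum_{v\in B}\deg(v,B)=3e(B)$, summing over $B\setminus B'$ and using \eqref{eqB} gives $|B\setminus B'|\,\e_1\binom{|B|}{2}<3e(B)\le 3\e_0\binom{|B|}{3}$. As $\binom{|B|}{3}=\frac{|B|-2}{3}\binom{|B|}{2}$, this yields $|B\setminus B'|<\e_0(|B|-2)/\e_1\le(\e_0/\e_1)|B|=\frac{\e_1}{64}|B|$.

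\textbf{Bounding $|A\setminus A'|$.} For any pair $\{y,z\}\subseteq B$ the codegree condition gives $\deg(yz,A)\ge\delta_2(H)-\deg(yz,B)\ge|A|-\deg(yz,B)$, because $n/3=|A|$. Summing over the $\binom{|B|}{2}$ pairs in $B$ and using $\sum_{\{y,z\}\subseteq B}\deg(yz,A)=\sum_{v\in A}\deg(v,B)$ and $\sum_{\{y,z\}\subseteq B}\deg(yz,B)=3e(B)$, we get
\[
\sum_{v\in A}\deg(v,B)\ \ge\ |A|\binom{|B|}{2}-3e(B)\ \ge\ |A|\binom{|B|}{2}-3\e_0\binom{|B|}{3}.
\]
On the other hand each $v\in A\setminus A'$ satisfies $\binom{|B|}{2}-\deg(v,B)>\e_1\binom{|B|}{2}$, so $|A|\binom{|B|}{2}-\sum_{v\in A}\deg(v,B)\ge|A\setminus A'|\,\e_1\binom{|B|}{2}$. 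Combining the two bounds and using $|B|=2|A|$ (so $3\binom{|B|}{3}\le 2|A|\binom{|B|}{2}$) gives $|A\setminus A'|\,\e_1\binom{|B|}{2}\le 2\e_0|A|\binom{|B|}{2}$, hence $|A\setminus A'|\le 2\e_0|A|/\e_1=(\e_0/\e_1)|B|=\frac{\e_1}{64}|B|$.

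\textbf{Misplaced vertices and $V_0$.} Since $\e_0=24\e$ with $\e\ll1$ we have $\e_1<1/2$, so any vertex of $A'$ has $\deg(\cdot,B)\ge(1-\e_1)\binom{|B|}{2}>\e_1\binom{|B|}{2}$; thus $A'\setminus A\subseteq B\setminus B'$ and $|A'\setminus A|\le|B\setminus B'|\le\frac{\e_1}{64}|B|$. Symmetrically every vertex of $B'$ has $\deg(\cdot,B)\le\e_1\binom{|B|}{2}<(1-\e_1)\binom{|B|}{2}$, so $B'\setminus B\subseteq A\setminus A'$ and $|B'\setminus B|\le\frac{\e_1}{64}|B|$. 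Finally a vertex of $V_0$ lies in neither $A'$ nor $B'$, so $V_0\cap A\subseteq A\setminus A'$ and $V_0\cap B\subseteq B\setminus B'$, giving $|V_0|=|V_0\cap A|+|V_0\cap B|\le\frac{\e_1}{32}|B|$. The only mildly delicate point is the count in the second step: one must observe that the codegree threshold is exactly $|A|$ so that summing $\deg(yz,A)\ge|A|-\deg(yz,B)$ over pairs in $B$ loses nothing beyond $3e(B)$; everything else is routine bookkeeping with $\e_1=8\sqrt{\e_0}$ and $|B|=2|A|$. (The minimality of $e(B)$ — i.e.\ Claim~\ref{clm:eB} — is not needed here, though it can be invoked to replace the last step.)
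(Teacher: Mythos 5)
Your proof is correct and follows essentially the same route as the paper: bound $|B\setminus B'|$ and $|A\setminus A'|$ by double-counting degrees into $B$ against \eqref{eqB} and the exact codegree bound $\delta_2(H)\ge n/3=|A|$, then derive the remaining three bounds from the disjointness of $A'$ and $B'$ (which, as you note, replaces any appeal to Claim~\ref{clm:eB}). Your treatment of $|A\setminus A'|$ is the same count the paper packages via $\overline e(ABB)$ and a pigeonhole on a bad pair $b_1b_2$; you simply combine the two sums directly without extracting the pair.
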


\begin{proof}
First assume that $|B\setminus B'|> \frac{\e_1}{64}|B|$. By the definition of $B'$ and the assumption $\e_1=8\sqrt{\e_0}$, we get that
\[
e(B) > \frac 13 \e_1\binom {|B|}2 \cdot \frac{\e_1}{64}|B| > \frac{\e_1^2}{64}\binom {|B|}3  = \e_0 \binom {|B|}3,
\]
which contradicts \eqref{eqB}.

Second, assume that $|A\setminus A'|> \frac{\e_1}{64}|B|$. Then by the definition of $A'$, for any vertex $v\notin A'$, we have that $\overline \deg(v,B)> \e_1\binom{|B|}{2}$. So we get
\[
\overline e(ABB)> \frac{\e_1}{64}|B| \cdot \e_1 \binom{|B|}2=\e_0|B| \binom{|B|}2> 3\e_0 \binom{|B|}3.
\]
Together with \eqref{eqB}, this implies that
\begin{align*}
\sum_{b_1 b_2\in \binom B2}\overline\deg(b_1 b_2)&\ge 3\overline e(B)+\overline e(ABB) 
                              > 3(1 - \e_0) \binom{|B|}3 + 3\e_0 \binom{|B|}3 = \binom{|B|}2 (|B|-2).
\end{align*}
By the pigeonhole principle, there exists $b_1 b_2\in \binom B2$, such that $\overline\deg(b_1 b_2)  > |B|-2 = \frac{2n}3-2$,
contradicting $\delta_2(H)\ge n/3$.

Consequently,
\begin{align*}
&|A'\setminus A|=|A'\cap B|\le |B\setminus B'|\le \frac{\e_1}{64}|B|,   \\
&|B'\setminus B|=|A\cap B'|\le |A\setminus A'|\le \frac{\e_1}{64}|B|, \\
&|V_0|=|A\setminus A'|+|B\setminus B'|\le \frac{\e_1}{64}|B|+\frac{\e_1}{64}|B|=\frac{\e_1}{32}|B|.
\end{align*}
So the proof is complete.
\end{proof}

We first deal with a special (ideal) case of Theorem~\ref{thm:E}.

\begin{lemma}\label{lem3}
Let  $0<\rho \ll 1$ and let $n$ be sufficiently large. 
Suppose $H$ is a 3-graph on $n\in 6\mathbb{Z}$ vertices with a partition of $V(H)=X\cup Z$ such that $|Z|= 2|X|$. Furthermore, assume that
\begin{itemize}
\item for every vertex $v\in X$, $\overline \deg(v, Z)\le \rho\binom{|Z|}2$,
\item given every vertex $u\in Z$, we have $\overline{\deg}(u v, X)\le \rho |X|$, for all but at most $\rho |Z|$ vertices $v\in Z\setminus \{u\}$. 
\end{itemize}
Then $H$ contains a $C_6$-factor.
\end{lemma}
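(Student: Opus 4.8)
Recall $C_6 = C_6^3$ has vertex set $\{1,\dots,6\}$ with edges $123, 345, 561$; so it has three ``link'' vertices $1,3,5$ (each in two edges) and three ``degree-one'' vertices $2,4,6$ (each in one edge). The key structural observation is that $C_6$ is a subhypergraph of $K_3^3(2)$: we may take parts $\{1,4\},\{3,6\},\{5,2\}$ so that each edge is a transversal. Equivalently, a natural way to embed a copy of $C_6$ here is to place a ``core'' triple $T$ (playing the role of one edge, say $135$) and then three additional vertices, one completing each edge. The plan is to build the factor by first consuming the ``bad'' vertices (those in $Z$ with small link into $X$, and the deficient pairs) using a small number of copies of $C_6$, and then tile the remaining near-ideal part where every vertex of $X$ is joined to almost all of $\binom{Z}{2}$ and almost every pair in $Z$ has large link into $X$.

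\textbf{Phase 1: absorbing the exceptional pairs.} First I would identify the set $Z_{\mathrm{bad}}$ of vertices $u\in Z$ that are in ``many'' deficient pairs, i.e.\ for which $\overline{\deg}(uv,X)>\rho|X|$ for at least, say, $\sqrt{\rho}|Z|$ vertices $v$; a double-counting argument against the second hypothesis shows $|Z_{\mathrm{bad}}|\le \sqrt{\rho}|Z|$. Every $u\in Z\setminus Z_{\mathrm{bad}}$ has $\overline{\deg}(uv,X)\le\rho|X|$ for all but at most $\rho|Z|$ vertices $v$. For each $u\in Z_{\mathrm{bad}}$ I would greedily extract one copy of $C_6$ containing $u$: pick $u$ together with five more vertices using the fact that every $v\in X$ has $\overline{\deg}(v,Z)\le\rho\binom{|Z|}2$, so two vertices of $X$ and three more vertices of $Z$ can be chosen to realize the six edges of $C_6$ (placing $u$ as a degree-one vertex of $C_6$ and the two $X$-vertices as link vertices keeps all three edges of the form $X X Z$ or $X Z Z$, which are plentiful). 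This removes $O(\sqrt{\rho}n)$ vertices; after removal we relabel and may assume $Z=Z\setminus Z_{\mathrm{bad}}$, retaining $|Z|=2|X|$ by also discarding a matching number of clean vertices into these copies or by adjusting $X$ correspondingly — the bookkeeping to keep $|Z|=2|X|$ exactly and both hypotheses essentially intact (with $\rho$ replaced by, say, $2\sqrt\rho$) is routine but must be done carefully.

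\textbf{Phase 2: tiling the near-ideal pair.} Now every $v\in X$ is joined to all but $2\sqrt\rho\binom{|Z|}2$ pairs of $Z$, and all but $2\sqrt\rho|Z|^2$ pairs in $Z$ have link into $X$ of size $\ge(1-2\sqrt\rho)|X|$. I would build a perfect $C_6$-factor so that each copy uses exactly one vertex of $X$ as a degree-one vertex — no, rather: since $|Z|=2|X|$ and each $C_6$ has $6$ vertices, a clean split is to put exactly $2$ vertices of $X$ and $4$ vertices of $Z$ in each copy, but $2\cdot(n/6)=n/3<|X|$ — so instead I put the right count: a $C_6$-factor has $n/6$ copies, $|X|=n/3$, $|Z|=2n/3$, so on average each copy contains $2$ vertices of $X$ and $4$ of $Z$. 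I would therefore aim for every copy to be of ``type $(2,4)$'': two $X$-vertices placed as two of the three link vertices $\{1,3,5\}$ and the remaining link vertex plus all three degree-one vertices from $Z$. Such a copy needs: one pair $xz$ with $x\in X$, $z\in Z$ ``good'' (three times), plus the internal $ZZ$ and $XX$ adjacency conditions — all guaranteed up to the $\sqrt\rho$-fraction of exceptions. I would first pair up $X$ into $n/6$ disjoint pairs and $Z$ into $n/6$ disjoint $4$-sets (a near-perfect matching/partition avoiding the rare bad configurations, easy by greedy choice since bad pairs are a $\sqrt\rho$-fraction), then within each ``block'' of $2+4$ vertices verify the six required edges exist; the rare blocks where this fails (at most a $\rho^{1/4}$-fraction, say) get repaired by local swaps, using that deficiencies are sparse.

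\textbf{Main obstacle.} The hard part is Phase 2's combinatorial bookkeeping: ensuring that $X$ can be partitioned into pairs and $Z$ into $4$-sets such that \emph{every} resulting block of six vertices actually spans a copy of $C_6$, not merely all but a few. The fraction of ``bad'' pairs is only $O(\sqrt\rho)$, so a random or greedy partition leaves $O(\sqrt\rho n)$ defective blocks, and these must be fixed. I expect the cleanest route is to set aside in advance a small ``reservoir'' $R\subseteq Z$ of clean vertices (of size a small constant fraction of $n$, with $|R|$ divisible appropriately), do the main partition on the rest so that at most $O(\sqrt\rho n)$ blocks are defective, and then re-route each defective block through $R$: replace the offending $Z$-vertices by reservoir vertices chosen to avoid all bad pairs with the relevant $X$-vertices and with each other — possible because any given vertex or pair is bad for only a $\sqrt\rho$-fraction of partners, so $R$ retains enough flexibility. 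Finally the leftover of $R$ (all clean) is tiled by type-$(2,4)$ copies directly, or, if $R$ was chosen all in $Z$, one matches it with the corresponding leftover $X$-vertices. Closing the count $|Z|=2|X|$ through all these adjustments, and keeping every removed/added set's size controlled by powers of $\rho$, is the real content; everything else is local edge-existence verification via the two hypotheses.
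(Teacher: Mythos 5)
Your high-level plan (use type-$(2,4)$ copies, i.e., two $X$-vertices as link vertices together with a chain $z^1z^2z^3z^4$ in $Z$) is exactly the right structure, and it matches the structure of the copies the paper ultimately produces. But two points should be flagged, the second of which is a genuine gap.

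First, Phase~1 is vacuous: the second hypothesis of the lemma quantifies over \emph{every} $u\in Z$ (not ``most''), so by definition each $u$ fails on at most $\rho|Z|$ partners, and your $Z_{\mathrm{bad}}$ is already empty. This is harmless but indicates a misreading of the hypothesis.

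Second, and more seriously, the ``greedy partition plus local repair'' strategy in Phase~2 does not obviously go through, and the difficulty is not mere bookkeeping. To match the $m=n/6$ chains with $X$-pairs you need a Hall-type argument on the auxiliary bipartite graph between $X$-pairs and chains. On the chain side, the second hypothesis does give high degree: for a chain with good internal pairs, each of $z^1z^2,z^2z^3,z^3z^4$ has link $\ge(1-\rho)|X|$ into $X$, so almost all $X$-pairs are compatible. But on the $X$-pair side you are using the \emph{first} hypothesis, $\overline{\deg}(x,Z)\le\rho\binom{|Z|}{2}\approx 8\rho m^2$, and this is far too weak: the pairs $\{z^1z^2\}$ (resp.\ $\{z^3z^4\}$) ranging over all chains form a \emph{matching} of $Z$ of size only $m$, and a priori the $\approx 8\rho m^2$ non-neighbour pairs of a fixed $x$ could contain that entire matching. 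Thus after an arbitrary or greedy choice of chains, a single $x\in X$ could be incompatible with every chain, and no reservoir swap of a few $Z$-vertices inside defective blocks can repair this, because the failure is global to $x$ rather than localized to a few blocks. This is precisely why the paper does not use an arbitrary partition of $Z$: it partitions $Z$ into four classes $Z_1,\dots,Z_4$, applies K\"uhn--Osthus's randomized perfect-matching lemma (Lemma~\ref{lem:random}) to choose the three matchings $M_i\subseteq G[Z_i,Z_{i+1}]$ so that every $x\in X$ sees at least $0.89m$ edges of each $M_i$, and only then runs Hall's theorem on the bipartite graph $\Gamma$ between $X$-pairs and chains — at which point both sides have degree $\ge 0.67m$. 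The randomized matching lemma is the key tool substituting for your unworked repair step; without it (or something equally strong, e.g.\ a careful explicit equalization of the matchings against all $N_H(x)$) the argument does not close.
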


\medskip
To prove Lemma \ref{lem3}, we follow the approach in the proof of \cite[Lemma 3.4]{CzMo} given by Czygrinow and Molla, who applied a result of K\"uhn and Osthus \cite{KuOs06_pse}.
A bipartite graph $G=(A, B, E)$ with $|A|=|B|=n$ is called $(d, \e)$-\emph{regular} if for any two subsets $A'\subseteq A$, $B'\subseteq B$ with $|A'|, |B'|\ge \e n$,
\[
(1-\e)d\le \frac{e(A', B')}{|A'| |B'|}\le (1+\e)d,
\]
and $G$ is called \emph{$(d, \e)$-super-regular} if in addition $(1-\e)d n \le \deg(v) \le (1+\e) d n$ for every $v\in A\cup B$.

\begin{lemma}[\cite{KuOs06_pse}, Theorem 1.1]\label{lem:random}
For all positive constants $d, \nu_0, \eta\le 1$ there is a positive $\e=\e(d, \nu_0, \eta)$ and an integer $N_0$ such that the following holds for all $n\ge N_0$ and all $\nu\ge \nu_0$. Let $G=(A, B, E)$ be a $(d, \e)$-super-regular bipartite graph whose vertex classes both have size $n$ and let $F$ be a subgraph of $G$ with $|F|= \nu |E|$. Choose a perfect matching $M$ uniformly at random in $G$. Then with probability at least $1-e^{-\e n}$ we have
\[
(1-\eta) \nu n \le |M\cap E(F)| \le (1+\eta) \nu n.
\]
\end{lemma}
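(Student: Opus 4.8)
The plan is to realize the $C_6$-factor so that each copy has vertex set $\{a,b,\ell_1,\ell_2,p_1,p_2\}$ with $a,b\in X$ and $\ell_1,\ell_2,p_1,p_2\in Z$, and edges $\{a,p_1,\ell_1\}$, $\{b,\ell_1,\ell_2\}$, $\{a,\ell_2,p_2\}$; here $a$ plays the role of vertex $1$ of $C_6$, $b$ of vertex $4$, while $\ell_1,\ell_2$ (roles $3,5$) are the two \emph{linking} vertices and $p_1,p_2$ (roles $2,6$) the two \emph{pendant} vertices. Writing $|X|=:m$, so $|Z|=2m$, $n=3m$, and $6\mid n$ forces $m$ even, a $C_6$-factor amounts to: (a) a perfect matching $M$ of $Z$ into pairs $\{\ell,p\}$ which are \emph{good}, i.e. $\overline\deg(\{\ell,p\},X)$ small; (b) a partition of the linking endpoints of these pairs into $m/2$ \emph{central} pairs $\{\ell_1,\ell_2\}$, each \emph{nice}, i.e. $\overline\deg(\{\ell_1,\ell_2\},X)$ small; and (c) bijective assignments of the resulting $m/2$ copies to $a$-vertices and to $b$-vertices of $X$, consistent with all three edges being present. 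The two hypotheses do exactly this: the second makes goodness and niceness generic inside $Z$, and the first controls, for each fixed $v\in X$, how many pairs in $Z$ fail to form an edge with $v$.

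First I would clean up $Z$. By the first hypothesis and convexity, all but at most $\sqrt\rho\,|Z|$ vertices $z\in Z$ are \emph{good for linking} in the sense that $\sum_{z'\in Z}\overline\deg(\{z,z'\},X)\le\sqrt\rho\,|X||Z|$, and such $z$ forms a nice pair (say $\overline\deg(\{z,z'\},X)\le\rho^{1/4}|X|$) with all but at most $\rho^{1/4}|Z|$ vertices $z'$. Fix $Z_{\mathrm{link}}\subseteq Z$ of size exactly $m$ consisting of good-for-linking vertices, put $Z_{\mathrm{pend}}:=Z\setminus Z_{\mathrm{link}}$ (also of size $m$), and let $G$ be the bipartite graph on $(Z_{\mathrm{link}},Z_{\mathrm{pend}})$ with $\ell\sim p$ iff $\{\ell,p\}$ is good; by the second hypothesis every vertex of $Z$ has at least $(1-\rho)|Z|-1$ good partners, so $\delta(G)\ge(1-2\rho)m-1$ and $G$ is $(d,\e)$-super-regular with $d$ close to $1$ and $\e$ as small as we wish (by choosing $\rho$ small). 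For each $a\in X$ let $G_a$ be the spanning subgraph of $G$ whose edges are the pairs $\{\ell,p\}$ with $\{a,\ell,p\}\in E$; then $|E(G)\setminus E(G_a)|\le\overline\deg(a,Z)\le\rho\binom{|Z|}2$, an $O(\rho)$-fraction of $|E(G)|$. Apply Lemma~\ref{lem:random} to $G$ (taking $F=G_a$, whose edge density $\nu\ge 1-O(\rho)$ clearly exceeds any fixed $\nu_0<1$, and $\eta$ small): a uniformly random perfect matching $M$ of $G$ satisfies $|M\cap E(G_a)|\ge(1-\eta)\nu m$, hence $|M\setminus E(G_a)|\le\delta m=:\delta m$ with $\delta=\eta+O(\rho)$, simultaneously for every $a\in X$ with probability $1-o(1)$ (union bound over $X$, using $m\,e^{-\e m}=o(1)$). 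Fix such an $M$; it partitions $Z$ into $m$ good pairs, each with one endpoint in $Z_{\mathrm{link}}$ and one in $Z_{\mathrm{pend}}$, and for every $a\in X$ at most $\delta m$ of these pairs fail to be an edge with $a$. The point of using a \emph{random} $M$ (rather than an arbitrary one) is exactly this spread property.

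Next I would repeat this one level up. Split $Z_{\mathrm{link}}=Z^1\cup Z^2$ with $|Z^1|=|Z^2|=m/2$ and let $\mathcal J$ be the bipartite graph on $(Z^1,Z^2)$ with $\ell\sim\ell'$ iff $\{\ell,\ell'\}$ is nice. Since every vertex of $Z_{\mathrm{link}}$ is good for linking, $\delta(\mathcal J)\ge(1-2\rho^{1/4})(m/2)-1$, so $\mathcal J$ is super-regular; as before, for each $b\in X$ the spanning subgraph $\mathcal J_b$ of edges $\{\ell,\ell'\}$ with $\{b,\ell,\ell'\}\in E$ has $|E(\mathcal J)\setminus E(\mathcal J_b)|\le\overline\deg(b,Z)=O(\rho)|E(\mathcal J)|$, and Lemma~\ref{lem:random} yields a random perfect matching $M'$ of $\mathcal J$ with $|M'\setminus E(\mathcal J_b)|\le\delta'(m/2)$ for every $b\in X$, where $\delta'$ is small. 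Now $M'$ groups the $m$ good pairs of $M$ into $m/2$ quadruples: a central pair $\{\ell_1,\ell_2\}\in M'$ together with the pendant partners $p_1,p_2$ that $M$ assigns to $\ell_1,\ell_2$. Because $M'$ covers $Z_{\mathrm{link}}$ and $M$ is a perfect matching between $Z_{\mathrm{link}}$ and $Z_{\mathrm{pend}}$, these quadruples partition $Z$; call them $\{\ell_1^t,\ell_2^t,p_1^t,p_2^t\}$, $t\in[m/2]$.

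Finally I would attach the $X$-vertices. Split $X=X_a\cup X_b$ with $|X_a|=|X_b|=m/2$. Form the bipartite graph on $([m/2],X_a)$ joining $t$ to $a$ iff $\{a,\ell_1^t,p_1^t\},\{a,\ell_2^t,p_2^t\}\in E$: since $\{\ell_1^t,p_1^t\}$ and $\{\ell_2^t,p_2^t\}$ are good pairs each $t$ has at least $(1-2\rho)m\ge m/2-2\rho m$ valid $a$ in $X_a$, while each $a\in X_a$ is invalid for $t$ only if one of $t$'s two good pairs lies in $M\setminus E(G_a)$, hence $a$ is invalid for at most $\delta m$ copies; a short Hall-type check then gives a perfect matching $t\mapsto a^t$. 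Similarly, the graph on $([m/2],X_b)$ joining $t$ to $b$ iff $\{b,\ell_1^t,\ell_2^t\}\in E$ has $t$-degree $\ge m/2-\rho^{1/4}m$ (central pairs are nice) and each $b\in X_b$ invalid for at most $\delta'(m/2)$ copies (spread of $M'$), so it too has a perfect matching $t\mapsto b^t$. For each $t$ the six vertices $a^t,b^t,\ell_1^t,\ell_2^t,p_1^t,p_2^t$ span a copy of $C_6$ via the three prescribed edges, and these copies are vertex-disjoint and exhaust $V(H)$. I expect this last step to be the crux: one must simultaneously ensure that every copy has \emph{many} admissible $X$-vertices (from goodness/niceness of the pairs) and that no $X$-vertex is ruled out by more than a tiny fraction of the copies (this is what Lemma~\ref{lem:random} delivers and the only reason randomness enters) --- balancing these is what makes Hall's condition hold. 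Everything else --- the averaging producing the good-for-linking vertices, the super-regularity checks for $G$ and $\mathcal J$, and the constant hierarchy $\rho\ll\e\ll\eta,\eta'\ll 1$ with $\e$ supplied by Lemma~\ref{lem:random} --- is routine.
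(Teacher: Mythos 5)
The statement you were asked about, Lemma~\ref{lem:random}, is the K\"uhn--Osthus random-matching theorem (\cite{KuOs06_pse}, Theorem~1.1), which the paper \emph{cites} and does not prove; your argument in fact proves Lemma~\ref{lem3} (the ideal case of the extremal analysis), using Lemma~\ref{lem:random} as a black box. Read as a proof of Lemma~\ref{lem3}, your argument is correct and follows essentially the same route as the paper: build an auxiliary ``goodness'' graph on $Z$, use Lemma~\ref{lem:random} to obtain random perfect matchings whose edges are spread out with respect to every $X$-vertex, and close with a Hall-type argument. The only genuine difference is the decomposition. The paper fixes an arbitrary perfect matching of $X$ into pairs up front, splits $Z$ into four equal parts $Z_1,\dots,Z_4$, applies Lemma~\ref{lem:random} \emph{three} times to produce paths $z^1\text{-}z^2\text{-}z^3\text{-}z^4$, and then matches $X$-pairs to $Z$-quadruples via Hall once. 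You instead split $Z$ into a ``link'' half and a ``pendant'' half, further split the link half into $Z^1\cup Z^2$, apply Lemma~\ref{lem:random} only \emph{twice} (once to pair links with pendants, once to pair links with links), which already fixes all $Z$-quadruples, and then assign the two $X$-roles ($a$ and $b$) by two separate Hall arguments on $X_a$ and $X_b$. Both schemes work because the same two estimates drive Hall: every $C_6$-slot has almost all of $X$ available (goodness/niceness of the pairs), and every $X$-vertex is excluded by only a tiny fraction of the slots (the spread property furnished by Lemma~\ref{lem:random}). Your version is marginally more economical (two random-matching invocations rather than three), at the cost of one extra Hall step. One cosmetic slip: your ``good-for-linking'' averaging invokes ``the first hypothesis,'' but it actually follows from the \emph{second}; the first hypothesis ($\overline\deg(v,Z)$ small for $v\in X$) is what you later use to bound $|E(G)\setminus E(G_a)|$.
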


\medskip
\begin{proof}[Proof of Lemma \ref{lem3}.]
Let $\e = \e(1, 0.9, 0.01)$ be the constant returned by Lemma \ref{lem:random} and let $\rho \ll \e$.
Suppose that $n$ is sufficiently large and $H$ is a 3-graph satisfying the assumption of the lemma.
Let $G$ be the graph of all pairs $uv$ in $Z$ such that $\overline{\deg}(u v, X)\le \rho |X|$. By the assumption, for any vertex $v\in Z$, we know
\begin{equation}\label{eq:dGY1}
\overline{\deg}_G(v)\le \rho |Z|.
\end{equation}

Let $m=|X|/2=|Z|/4$ and note that $m\in \mathbb{Z}$.
Arbitrarily 
partition $Z$ into four sets $Z_1, Z_2, Z_3, Z_4$, each of order $m$. 
Let $M=\{x_1 x_1', \dots, x_m x_m'\}$ be an arbitrary perfect matching of $X$.
By \eqref{eq:dGY1} and $|Z|= 4m$, we have
$\delta(G[Z_i, Z_{i+1}])\ge (1-4\rho)m$ for $i\in [3]$. It is easy to see that for $i\in [3]$, $G[Z_i, Z_{i+1}]$ is $(1, \e)$-super-regular as $\rho \ll \e$.
For any $x\in X$ and $i\in [3]$, let $F_x^i:= E(G[Z_i, Z_{i+1}]) \cap N_H(x)$.
Since $\overline \deg(x, Z)\le \rho\binom{|Z|}2\le 8\rho m^2$, we have $|F_x^1|, |F_x^2|, |F_x^3|\ge (1-4\rho) m^2 - 8\rho m^2 \ge 0.9 m^2$, as $\rho \ll 1$.
For $i\in [3]$, let $M_i$ be a perfect matching chosen uniformly at random from $G[Z_i, Z_{i+1}]$.
By applying Lemma \ref{lem:random} with $\nu_0 =0.9$ and $\eta=0.01$, for any $x\in X$, with probability at least $1-e^{-\e m}$, we have
\begin{equation}\label{eq:m12}
|M_1\cap E(F_x^1)|, |M_2\cap E(F_x^2)|, |M_3\cap E(F_x^3)| \ge (1-\eta) \nu_0 m \ge 0.89 m.
\end{equation}
Thus for all $i\in [3]$, there exists a matching $M_i$ in $G[Z_i, Z_{i+1}]$ such that \eqref{eq:m12} holds for all $x\in X$.
Label $Z_i=\{z_1^i,\dots, z_m^i\}$ for $i\in [4]$ such that $M_i=\{z_1^i z_1^{i+1}, \dots, z_{m}^i z_{m}^{i+1}\}$.
Let $\Gamma$ be a bipartite graph on $(M, [m])$ such that $\{x_j x_j', i\}\in E(\Gamma)$ if and only if 
\[
x_j z_i^1 z_i^2, x_j' z_i^2 z_i^3, x_j z^3_i z_i^4\in E(H)
\]
for $x_j x_j'\in M$ and $i\in [m]$. For every $i\in [m]$, since $z_i^1 z_i^2, z_i^2 z_i^3, z^3_i z_i^4\in E(G)$, we have $\deg_\Gamma(i) \ge m - 3\rho |X| = (1-6\rho)m$ by the definition of $G$. On the other hand, by \eqref{eq:m12}, we have $\deg_\Gamma(x_j x_j') \ge m - 3(1- 0.89) m = 0.67 m$ for any $x_j x_j'\in M$. By a simple corollary of Hall's Theorem, $\Gamma$ contains a perfect matching, which gives a $C_6$-factor in $H$.
\end{proof}

Now we are ready to prove Theorem \ref{thm:E}.

\begin{proof}[Proof of Theorem \ref{thm:E}.]
We will build four vertex-disjoint $C_6$-tilings $\Q_1, \Q_2, \R, \sss$ whose union is a perfect $C_6$-tiling of $H$. 
The purpose of the $C_6$-tilings $\Q_1, \Q_2, \R$ is covering the vertices of $V_0$ and adjusting the sizes of $A'$ and $B'$ such that we can apply Lemma \ref{lem3} after $\Q_1, \Q_2, \R$ are removed.
Note that by $|B\setminus  B'|\le \frac{\e_1}{64}|B|$,
\begin{equation}\label{eq:V_0}
\deg(w, B')\ge \deg\left(w,B \right)-|B\setminus B'| |B| \ge \frac{\e_1}2\binom{|B'|}{2} \text{ for any vertex }w\in V_0,
\end{equation}
and by $|B'\setminus  B|\le \frac{\e_1}{64}|B|$, we have
\begin{equation*}
\deg(v, B')\le \deg\left(v,B \right)+|B'\setminus B| |B'| \le 2\e_1\binom{|B'|}{2} \text{ for any vertex }v\in B'.
\end{equation*}
Moreover, the latter inequality implies that for all but at most $\sqrt{2\e_1} |B'|$ vertices $u\in B'$, we have $\deg(u v, B') \le \sqrt{2\e_1} |B'|$.
By $\delta_2(H)\ge n/3$ and the bounds in Claim \ref{clm:size}, this implies that
\begin{equation*}
\deg(u v, A') \ge n/3 - \deg(u v, B') - |V_0| \ge n/3 - 2\sqrt{\e_1} |B|.
\end{equation*}
By Claim \ref{clm:size}, $|A'|\le n/3 + \frac{\e_1}{64}|B|$ and thus
\begin{equation*}
\overline\deg(u v, A') \le 2\sqrt{\e_1} |B| + \frac{\e_1}{64}|B| \le 3\sqrt{\e_1} |B| = 2\sqrt{\e_1} n.
\end{equation*}
So we have the following

\medskip
\noindent (\dag) given every vertex $v\in Z$, we have $\overline{\deg}(u v, A')\le 2\sqrt{\e_1} n$, for all but at most $\sqrt{2\e_1} |B'|$ vertices $u\in B'\setminus \{v\}$. 
\medskip

\medskip
\noindent {\bf The $C_6$-tilings $\Q_1, \Q_2$.} 
Assume that $|V_0|=q_1$ and $|B'| = 2n/3 + q$. Thus, by Claim \ref{clm:size}, $q_1, q\in \mathbb{Z}$ and $0\le q_1\le \frac{\e_1}{32}|B|$, $-\frac{\e_1}{64}|B|\le q\le \frac{\e_1}{64}|B|$. 
We claim that there is a $C_6$-tiling $\Q_1$ consisting of $q_1$ copies of $C_6$ such that each copy contains one vertex in $A'$, one vertex in $V_0$ and four vertices in $B'$ and a $C_6$-tiling $\Q_2$ consisting of $q_2 = \max\{q, 0\}$ copies of $C_6$ such that each copy contains one vertex in $A'$ and five vertices in $B'$. 

To see this, first, note that $\delta_2(H[B'])\ge q_2$. Thus, by a result of \cite[Fact 2.1]{RRS09}, we know that $B'$ contains a matching $M=\{e_1,\dots, e_{q_2}\}.$\footnote{We remark that this is the only place where we need the exact codegree condition $n/3$.} 
Now consider $V_0\cup (B'\setminus V(M))$. We claim that we can greedily find a matching $M'=\{e_{q_2+1},\dots, e_{q_1+q_2}\}$ such that each edge contains exactly one vertex in $V_0$.
Indeed, by \eqref{eq:V_0}, each vertex $w\in V_0$ has at least $\frac{\e_1}2\binom{|B'|}{2}$ neighbors in $B'$. Note that the number of vertices in the existing matching is at most $3(q_1+q_2)\le \frac{9\e_1}{64}|B|$, and thus the number of pairs that are unavailable for $w$ is at most $\frac{9\e_1}{64}|B| |B'| < \frac{\e_1}2\binom{|B'|}{2}$. So we can pick an edge that contains $w$ and two vertices in $B'$ which is disjoint from other edges in the matching.

For each $1\le i\le q_1 + q_2$, let $e_i = \{u_i, v_i, w_i\}$. In particular, assume $V_0=\{w_{q_2+1}, \dots, w_{q_1 + q_2}\}$. 
By (\dag), fix $u_i, v_i\in B'$, we can pick vertices $x_i, y_i\in B'$ such that $\overline\deg(u_i x_i, A') \le 2\sqrt{\e_1} n$ and $\overline\deg(v_i y_i, A') \le 2\sqrt{\e_1} n$. So we can pick a vertex $z_i\in N(u_i x_i, A')\cap N(v_i y_i, A')$. 
Note that $\{u_i, v_i, w_i, x_i, y_i, z_i\}$ spans a desired copy of $C_6$.
Also, note that we have $|B'| - \sqrt{2\e_1} |B'|$ choices for each $x_i$ and $y_i$, respectively, and $|A'| - 4\sqrt{\e_1}n$ choices for $z_i\in A'$.
So we can select these vertices without repetition, which gives the desired $C_6$-tilings $\Q_1$ and $\Q_2$.

Let $A_1$ and $B_1$ be the sets of vertices in $A'$ and $B'$ not covered by $\Q_1\cup \Q_2$, respectively. Note that $q_1 + q_2\le |A\setminus A'|\le \frac{\e_1}{64}|B|$ by Claim~\ref{clm:size}, thus
\begin{equation}\label{eq:B_1}
|B_1|\ge |B'|-5(q_1+q_2)\ge |B'|-\frac{5\e_1}{64}|B|\ge |B| - \frac{3\e_1}{32}|B|.
\end{equation}
Note that by $|A'|+|V_0|+|B'|=n$, we have $|A'| = \frac n3 - q_1 - q$.
If $q\ge 0$, then by the definition of $\Q_1, \Q_2$, we have $|A_1|=|A'| - q_1 - q = \frac n3 - 2q_1 - 2q$ and $|B_1| = |B'| - 4q_1 - 5q = \frac{2n}3 - 4q_1 - 4q$, i.e., $|B_1| = 2|A_1|$.
Otherwise $q < 0$ and thus, $|A_1|=|A'| - q_1 = \frac n3 - 2q_1 - q$ and $|B_1| = |B'| - 4q_1 = \frac{2n}3 - 4q_1 + q$. So we have $2|A_1| - |B_1| = -3q >0$.
Define $s=\frac13(2|A_1| - |B_1|)$. Then 
$s=0$ if $q\ge 0$ and $s=-q \le \frac{\e_1}{64}|B|$ if $q<0$.

\medskip
\noindent {\bf The $C_6$-tiling $\R$.} Next we build our $C_6$-tiling $\R$ of size $s \le \frac{\e_1}{64}|B|$ such that every element of $\R$ contains three vertices in $A_1$ and three vertices in $B_1$. 
We will construct one desired copy of $C_6$ such that for each of its vertex $v$, there are more than $3s$ vertices in $A_1$ or $B_1$ can be selected as $v$, thus proving the claim.
We start with any vertex $u$ in $B_1$. 
By (\dag), we can pick $v\in B_1$ and then pick $w\in B_1$ such that $\overline\deg(u w, A_1) \le 2\sqrt{\e_1} n$ and $\overline\deg(v w, A_1) \le 2\sqrt{\e_1} n$. 
Note that the numbers of choices for $v$ and $w$ are at least $|B_1| - \sqrt{2\e_1} |B'| >3s$ and at least $|B_1| - 2\sqrt{2\e_1} |B'|>3s$, respectively.
At last we pick $x\in N(uv, A_1)$, $y\in N(uw, A_1)$ and $z\in N(vw, A_1)$, and for each of them, at least $|A_1| - 2\sqrt{\e_1} n > 3s$ vertices can be selected. This completes the proof.

\medskip
Let $A_2$ be the set of vertices of $A$ not covered by $\Q_1, \Q_2, \R$ and define $B_2$ similarly. 
Then $|A_2|=|A_1|-3s$ and $|B_2|=|B_1|-3s$. If $q\ge 0$, then $s=0$ and $|B_2|=2|A_2|$. Otherwise $s=-q$ and so $2|A_2| - |B_2| = 2|A_1| - |B_1| - 3s = 0$. Furthermore, by $s\le \frac{\e_1}{64}|B|$ and \eqref{eq:B_1}, we have 
\[
|B_2|= |B_1|-3s\ge |B| - \frac{3\e_1}{32}|B| - \frac{3\e_1}{64}|B|> (1-\e_1)|B|.
\]
Hence, for every vertex $v\in A_2$,
\[
\overline{\deg}(v, B_2)\le \overline{\deg}(v, B')\le \e_1 \binom {|B|}2 + |B'\setminus B| |B'|\le \e_1 \binom {\frac{1}{1-\e_1}|B_2|}2 + \frac{\e_1}{2}|B_2|^2< 3\e_1 \binom{|B_2|}2.
\]
Moreover, by the definition of $A_2$ and $s$, we have
\[
|A_2|=|A_1|-3s \ge \frac n3 - 2q_1 - 2|q| - 3s \ge (1/3-\e_1)n. 
\]
So we get $n\le 4|A_2|$.
By (\dag), given $v\in B_2$, for all but at most $\sqrt{2\e_1} |B'|\le 2\sqrt{\e_1} |B_2|$ vertices $u\in B_2$, we have
\begin{align*}
\overline{\deg}(u v, A_2)\le 2\sqrt{\e_1} n \le 8\sqrt{\e_1} |A_2|.
\end{align*}

\medskip
\noindent {\bf The $C_6$-tiling $\sss$.} 
At last, we apply Lemma \ref{lem3} with $X=A_2$, $Z=B_2$ and $\rho=8\sqrt{\e_1}$ and get a $C_6$-factor $\sss$ on $A_2\cup B_2$.
This concludes the proof of Theorem \ref{thm:E}.
\end{proof}

\section{Concluding Remarks}

In this paper we have studied $C_6$-factors in 3-graphs.
Note that we can state our main result in the following way: Given $n=6t$ be sufficiently large, then any $n$-vertex 3-graph $H$ with $\delta_2(H)\ge 2t$ contains $t$ vertex-disjoint copies of $C_6$.
This suggests the following conjecture.

\begin{conjecture}\label{conj:1}
Given $n\ge 6t$ be sufficiently large, then any $n$-vertex 3-graph $H$ with $\delta_2(H)\ge 2t$ contains $t$ vertex-disjoint copies of $C_6$.
\end{conjecture}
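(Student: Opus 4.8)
The plan is to run the same extremal/non-extremal dichotomy as in the proof of Theorem~\ref{main}, now with the lower-bound example $H_0$ playing the role of the extremal configuration. Call $H$ \emph{$\gamma$-extremal} if there is a set $B\subseteq V(H)$ with $|B|\le (2+\gamma)t$ and $e(H[V\setminus B])\le \gamma n^3$; for $H$ close to $H_0$ (with $B\approx X$) this holds, and one checks that, after deleting a bounded number of vertices, it agrees with the notion of $\e$-extremality used in Theorems~\ref{nonex} and \ref{thm:E}. Fix $0<\gamma\ll 1$ throughout. (When $n=6t$ the argument below collapses to Theorem~\ref{main}.)

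\textbf{Extremal case.} Here one should follow the proof of Theorem~\ref{thm:E} with only cosmetic changes: clean up $B$ and $V\setminus B$ into near-ideal sets $X'$ (with $|X'|\approx 2t$) and $Y'$ with $H[Y']$ almost empty; since $\delta_2(H)\ge 2t$, every pair inside $Y'$ sends almost all of its codegree into $X'$, so one can build $t$ vertex-disjoint copies of $C_6$, each meeting $X'$ in exactly two vertices and $Y'$ in four, by the Hall-type/super-regularity argument of Lemma~\ref{lem3}. The hypothesis $n\ge 6t$ supplies the room needed in $Y'$, and — exactly as at the footnote after \eqref{eq:B_1} — the precise bound $\delta_2(H)\ge 2t$ (rather than $2t-1$) forces $|X'|\ge 2t$, which is what lets the $t$ copies fit; with $|X'|=2t-1$ one recovers the obstruction in $H_0$.

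\textbf{Non-extremal case.} Here I would split according to the size of $n$. If $6t\le n\le 6t/(1-3\gamma)$, then $\delta_2(H)\ge 2t\ge (1/3-\gamma)n$, so the codegree is essentially $n/3$: delete $n-6t$ arbitrary vertices to obtain a $6t$-vertex $3$-graph $H'$ with $\delta_2(H')\ge 2t-(n-6t)\ge (1/3-\gamma')\,|V(H')|$ for a suitable $\gamma'=O(\gamma)$, note that $H'$ is not $3\gamma'$-extremal (since $H$ is not $\gamma$-extremal), and apply Theorem~\ref{nonex} to get a $C_6$-factor of $H'$, hence $t$ disjoint copies of $C_6$ in $H$. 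The real difficulty is when $n$ is substantially larger than $6t$, say $n>6t/(1-3\gamma)$: then $\delta_2(H)\ge 2t$ can be a small fraction of $n$, and neither the Weak Regularity Lemma nor the absorbing reservoir — both of which require $\delta_2\ge cn$ for a constant $c>0$ — is available. In this regime I would argue by a maximum-tiling/augmentation scheme: take a maximum $C_6$-tiling $\M$, suppose $|\M|=m<t$, and set $U:=V\setminus V(\M)$, so that $|U|=n-6m=\Omega(n)$. Then $H[U]$ is $C_6$-free; since any $3$-graph on at least six vertices with minimum codegree at least four contains a copy of $C_6$ (pick any three vertices and greedily choose distinct private neighbours of the three pairs), we get $\delta_2(H[U])\le 3$, and moreover $e(H[U])$ is at most the Tur\'an number of the loose triangle, hence $O(|U|^2)$. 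Consequently most pairs $\{x,y\}\subseteq U$ have at least $2t-O(1)$ neighbours inside $V(\M)$, so by averaging some copy $K\in\M$ has at least two — in fact, for many $K$, almost all six — of its vertices in $N_H(xy)$. The plan is then to \emph{augment}: replace $K$ by two copies of $C_6$ spanned by $V(K)\cup\{x,y\}$ together with four further vertices of $U$ (of which there is an abundance), contradicting the maximality of $\M$.

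The \textbf{main obstacle} is making this last step work, i.e.\ proving a "$C_6$-tiling from sublinear codegree" statement without regularity or absorption. One must show that whenever a pair $\{x,y\}\subseteq U$ has enough neighbours in a copy $K$, the twelve vertices of the two intended replacement copies can always be completed inside the (huge) set $U$; this requires controlling how the links of the vertices $w\in V(K)$ sit inside $U$ — exactly where the $C_6$-freeness and sparseness of $H[U]$ must be leveraged — and one must also guarantee the rewiring terminates with $t$ copies rather than stalling earlier. An attractive alternative is a direct structural classification: show that any $n$-vertex $3$-graph with $\delta_2(H)\ge 2t$, $n\ge 6t$, and fewer than $t$ disjoint copies of $C_6$ must essentially be $H_0$ — i.e.\ must admit a blocker set of size at most $2t-1$ meeting every copy of $C_6$ in at least two vertices — and then derive a contradiction with $\delta_2(H)\ge 2t$ as in the extremal analysis.
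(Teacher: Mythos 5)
The statement you are addressing is stated in the paper as an open problem, Conjecture~\ref{conj:1}, not a proved theorem: the paper establishes only the boundary case $n=6t$ (Theorem~\ref{main}) and remarks at the end of Section~6 that the conjecture is known for $t=1$. There is thus no proof in the paper to compare against, and your proposal must stand on its own.

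You have correctly identified the genuine obstruction and flagged it yourself. Once $n$ exceeds a constant multiple of $t$, the hypothesis $\delta_2(H)\ge 2t$ is $o(n)$, and every tool the paper uses --- the Weak Regularity Lemma (Corollary~\ref{prop16}), the almost-perfect tiling lemma (Lemma~\ref{lem:tiling}), the absorbing reservoir (Lemma~\ref{lem:absorb}), and the lattice/transferral machinery --- fundamentally requires $\delta_{k-1}(H)\ge cn$ for a constant $c>0$. Your fallback augmentation scheme (take a maximum $C_6$-tiling $\M$, use the $C_6$-freeness and sparsity of the leftover set $U$ to rewire) is a reasonable heuristic, and the auxiliary observation that $\delta_2(H[U])\le 3$ via the greedy three-edge construction is correct, but you concede you cannot force the rewiring to terminate, and indeed you have not excluded a maximal tiling stalling with $|\M|<t$ while $|U|$ is still of linear size. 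The reduction of the regime $6t\le n\le 6t/(1-3\gamma)$ to Theorem~\ref{nonex} by deleting $n-6t$ vertices is sound, and the extremal sketch (extract $t$ disjoint copies using roughly $2t$ vertices from $X'$ and $4t$ from $Y'$) is plausible, though it needs a reworked version of Lemma~\ref{lem3}, which produces a $C_6$-\emph{factor} of a bipartition with $|Z|=2|X|$ rather than $t$ disjoint copies from a heavily lopsided one. In short, your sketch is a reasonable first pass at the easy regimes, but the core of Conjecture~\ref{conj:1} --- the non-extremal case with $n\gg 6t$ --- is precisely what the paper leaves open.
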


Note that this conjecture, if true, trivially implies the following conjecture.

\begin{conjecture}\label{conj:2}
Given $n\ge 6t$ be sufficiently large, then any $n$-vertex 3-graph $H$ with $\delta_2(H)\ge 2t$ contains $t$ vertex-disjoint loose cycles.
\end{conjecture}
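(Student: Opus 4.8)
The plan is to establish the stronger Conjecture~\ref{conj:1} — that $\delta_2(H)\ge 2t$ and $n\ge 6t$ force $t$ vertex-disjoint copies of $C_6$ — from which Conjecture~\ref{conj:2} is immediate, every $C_6$ being a loose cycle. Write $n=6t+j$ with $j\ge 0$, equivalently $\delta_2(H)\ge(1/3-\gamma)n$ with $\gamma=j/(3n)$. The case $j=0$ is precisely Theorem~\ref{main}, so assume $j\ge 1$ and split according to whether $\gamma$ is a small constant or is bounded away from $0$, i.e. whether $n$ exceeds $6t$ by at most, or by more than, a small constant fraction.

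When $\gamma$ is a small constant, the whole scheme of this paper applies. If $H$ is not $\epsilon$-extremal for a suitable small $\epsilon\gg\gamma$, follow the proof of Theorem~\ref{nonex}: apply Lemma~\ref{lem:absorb} to get an absorbing set $W$ with $|W|\in 6\mathbb{Z}$, pass to $H'=H[V(H)\setminus W]$, apply Lemma~\ref{lem:tiling} with $k=3$, $h=2$ to cover all but a set $U$ of at most $\alpha n$ vertices of $H'$ by copies of $C_6$, and then absorb into $W$ the set obtained from $U$ by deleting its $n\bmod 6$ ``extra'' vertices; this yields $\lfloor n/6\rfloor\ge t$ vertex-disjoint copies of $C_6$, of which we keep $t$. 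If $H$ is $\epsilon$-extremal one adapts the proof of Theorem~\ref{thm:E}: there is a set $X$ with $|X|\ge 2t$ meeting almost all edges, the remaining vertices are plentiful, and one assembles $t$ copies of $C_6$, each spending two, three or four vertices in $X$, balancing the counts. This is the one place where the sharp hypothesis $\delta_2(H)\ge 2t$, rather than $n/3$, is genuinely needed — the configuration ``all triples meeting a set $X$ of size $2t-1$'' shows $2t$ is best possible — so the single step in the proof of Theorem~\ref{thm:E} that uses the exact value $n/3$, the extraction of a matching of size $q_2$ from $\delta_2(H[B'])\ge q_2$, must be rechecked under $\delta_2(H)\ge 2t$ and the slightly altered part sizes forced by $n>6t$.

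The hard case, and the main obstacle, is when $\gamma$ is bounded away from $0$: then $\delta_2(H)=2t$ is small relative to $n$, and neither Lemma~\ref{lem:absorb} nor Lemma~\ref{lem:tiling} is available, since both rest on reachability arguments that require codegree close to $n/3$. Here one would want a dichotomy governed by a small absolute threshold $\delta_0$: either $H$ has a set $X$ with $|X|\le 2(1+\delta_0)t$ meeting all but $o(n^3)$ edges, in which case $H$ is close to the extremal example and one greedily builds $t$ copies of $C_6$ as above (the counts again working out exactly because $\delta_2(H)\ge 2t$); or no such $X$ exists and $H$ is ``spread out'', in which case a new, low-codegree absorbing mechanism is required — plausibly one built from many short loose \emph{paths} that can be closed up into loose cycles of varying even length, combined with a nibble-type near-packing to mop up the leftover. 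Making this second alternative precise is the crux; it is presumably the first point at which the freedom to use loose cycles of arbitrary length, rather than only $C_6$, is genuinely exploited, which also suggests that Conjecture~\ref{conj:2} may be more tractable than Conjecture~\ref{conj:1}. Throughout, a minor but unavoidable nuisance is tracking $n\bmod 6$ together with the residues of $|W|$ and of the almost-tiling's leftover, so that the assembled pieces leave at most $n\bmod 6$ vertices uncovered and hence produce at least $t$, not $t-1$, vertex-disjoint loose cycles.
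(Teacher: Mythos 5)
The statement you have been asked to prove is \emph{not} a theorem of the paper: it is Conjecture~\ref{conj:2}, which the authors explicitly pose as open (they remark only that it is ``not hard to show both conjectures for $t=1$''). There is therefore no proof in the paper to compare your proposal against, and any purported proof must be evaluated on its own merits.

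On those merits your proposal has a genuine and unrepaired gap, one that you yourself flag. You split according to whether $\gamma = (n-6t)/(3n)$ is small or bounded away from $0$. In the second regime — which is in fact the bulk of the conjecture, since it allows $\delta_2(H)=2t$ to be an arbitrarily small fraction of $n$ — you correctly observe that neither Lemma~\ref{lem:absorb} nor Lemma~\ref{lem:tiling} applies (both need codegree $(1/3-o(1))n$, not $2t$), and you then sketch a dichotomy (``either $H$ has a small dominating set $X$ \dots or a new, low-codegree absorbing mechanism is required''), concluding that ``making this second alternative precise is the crux.'' That is an honest description of what is missing, but it is not a proof: no absorbing or packing mechanism valid at codegree $2t$ with $n\gg t$ is supplied, and indeed none is known. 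Even in the first regime, where $\gamma$ is a small constant, the proposal is a plan rather than an argument: the adaptation of Theorem~\ref{thm:E} is reduced to ``recheck the one step using the exact value $n/3$,'' but this step — extracting a matching of size $q_2$ from $\delta_2(H[B'])\ge q_2$ — is sensitive to the precise relation between $|B'|$, $n$, and $t$ once $n>6t$, and the verification is not carried out. The parity/bookkeeping of $|W|$, $|U|$, and $n\bmod 6$ that you mention in passing also needs to be done carefully, since Lemma~\ref{lem:absorb} only absorbs sets of size divisible by $6$.

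In short: you have correctly identified that Conjecture~\ref{conj:2} would follow from Conjecture~\ref{conj:1}, correctly identified where the paper's machinery breaks down, and correctly identified that the hard case requires a new idea — but you have not supplied that idea, so this is an outline of an approach and a diagnosis of the difficulty, not a proof. That is the appropriate status, since the statement is a conjecture.
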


Conjecture~\ref{conj:2} can be seen as an analogue of Corr\'adi-Hajnal Theorem for loose cycles in 3-graphs.
It is not hard to show both conjectures for $t=1$.

Note that the result in \cite{HZZ_tiling} implies that $t_1(n, C_6^3) = (5/9+o(1))\binom n2$.
Indeed, it is shown that $t_1(n, K_3^3(2)) = (5/9+o(1))\binom n2$ and the upper bound holds because $C_6^3$ is a subhypergraph of $K_3^3(2)$.
The lower bound follows from the construction that shows the sharpness of Theorem~\ref{main} in Section 1. It is interesting to know the exact value of $t_1(n, C_6^3)$.

\section*{Acknowledgment}

We thank Yi Zhao for invaluable comments on the manuscript.

\bibliographystyle{plain}
\bibliography{Apr2015}

\end{document}